\newcommand{\adef}{\begin{defin}}
\newcommand{\zdef}{\end{defin}}
\newtheorem{defin}{Definition}
\newtheorem{theorem}{Theorem}[section]
\newtheorem{lemma}[theorem]{Lemma}
\newtheorem{proposition}[theorem]{Proposition}
\newtheorem{corollary}[theorem]{Corollary}
\def\Ave{\operatorname{Average}}
\def\dist{\operatorname{dist}}
\def\Aut{\operatorname{Aut}}
\numberwithin{equation}{section}
\numberwithin{equation}{section}
\DeclareMathOperator*{\esssup}{ess\,sup}
\newcommand{\vertiii}[1]{{\left\vert\kern-0.25ex\left\vert\kern-0.25ex\left\vert #1
    \right\vert\kern-0.25ex\right\vert\kern-0.25ex\right\vert}}
\def\block(#1,#2)#3{\multicolumn{#2}{c}{\multirow{#1}{*}{$ #3 $}}}
\DeclarePairedDelimiterX{\inp}[2]{\langle}{\rangle}{#1, #2}
\theoremstyle{definition}
\newtheorem{definition}[theorem]{Definition}
\newcommand{\N}{\mathbb{N}}
\newcommand{\St}{\mathbb{S}}
\newcommand{\U}{\mathbb{U}}
\newcommand{\D}{\mathbb{D}}
\newcommand{\T}{\mathbb{T}}
\newcommand{\V}{\mathbb{V}}
\newcommand{\To}{\longrightarrow}
\newcommand{\e}{\varepsilon}
\theoremstyle{remark}
\newcommand\PO{{\mathrm{PO}}}
\newcommand{\F}{\mathscr{F}}
\newcommand{\Fn}{\mathscr{F}^{(n)}}
\newcommand{\Fm}{\mathscr{F}^{(m)}}
\newcommand{\Fk}{\mathscr{F}^{(k)}}
\newcommand{\G}{\mathscr{G}}
\newcommand{\W}{\mathscr{W}}
\newcommand{\C}{\mathscr{C}}
\newcommand{\CX}{\mathscr{C}(X_0,X_1)}
\newcommand{\Cdual}{\mathscr{C}(X_0^*,X_1^*)}
\newcommand{\CoX}{\mathscr{C}_0(X_0,X_1)}
\newcommand{\CooX}{\mathscr{C}_{00}(X_0,X_1)}
\newcommand{\WX}{\mathscr{W}(\mathcal X)}
\newcommand{\AU}{A^\infty_\mathbb U}
\numberwithin{equation}{section}
\author{F\'elix Cabello S\'anchez}
\author{Jes\'us M.\ F.\ Castillo}
\author{Willian H.\ G.\ Corr\^ea}
\address{Instituto de Matem\'aticas Imuex, Universidad de Extremadura,
Avenida de Elvas s/n, 06011 Badajoz, Spain.}
\email{cabello@unex.es, \quad castillo@unex.es}
\address{Departamento de Matem\'atica, Instituto de Ci\^encias Matem\'aticas e de Computa\c{c}\~ao, Universidade de S\~ao Paulo, Av. Trabalhador S\~ao-carlense 400, 13566-590 S\~ao Paulo SP, Brazil}
\email{willhans@icmc.usp.br}
\thanks{The research of the first and second authors was supported by Projects MINCIN MTM2016-76958-C2-1-P and PID2019-103961GB-C21. The third author was supported by FAPESP, processes 2016/25574-8 and 2018/03765-1, and by CAPES, PDSE program, grant 88881.134107/2016-0}
\subjclass[2010]{46B70, 46M18, 46H99}
\begin{document}

\title[Higher order derivatives of analytic families]{Higher order derivatives of analytic families of Banach spaces}

\maketitle


\begin{abstract} We show that the Rochberg spaces generated by complex interpolation form themselves complex interpolation scales and obtain their new interpolated spaces and associated derivations. We present our results in the context of analytic families of Banach spaces and study the problem of determining the Rochberg spaces induced by these new families.\end{abstract}

\section{Introduction}
This paper studies certain analytic families of Banach spaces that spring naturally in the context of complex interpolation of families \cite{Coifman1982}. We will work in the context of \emph{admissible} spaces $\mathscr F$ of analytic functions (Definition \ref{def:ad}) over a complex domain $\U$ as formalized by Kalton and Montgomery-Smith \cite{kalt-mon}. Starting with such an $\F$ we will consider, for $n\in \N$ and $z \in \mathbb U$, the spaces introduced by Rochberg \cite{rochpac} and formed by the arrays of the truncated sequence of the Taylor coefficients of the elements of $\F$, namely $$\Fn_{z} = \left\{\left (\frac{f^{(n-1)}(z)}{(n-1)!}, \dots, f'(z), f(z)\right): f\in \mathscr F \right\}$$
endowed with the natural quotient norm. The space $\F_z=\F^{(1)}_z$ of arrays of length one (the values of the functions of $\F$ at $z$) correspond, in the suitable context, to classical interpolation spaces, while arrays of length two (the pair formed by the values of the derivative of the functions and the values of the functions at $z$) constitute the so-called first derived space and correspond, in the suitable context, to twisted sums of the spaces $\F_z$.

Admissible spaces $\F$ emerge from complex interpolation schemas in different ways. If one has an interpolation couple $(X_0, X_1)$ and works on the complex unit strip then $\F$ could be the classical Calder\'on space $\mathscr C(X_0,X_1)$ associated to the pair. If one has a suitable family $\mathcal X= \{X_u: u\in \partial \U\}$ of Banach spaces then the complex interpolation method for families \cite{Coifman1982} can be applied to generate the space $\F$. In general, complex interpolation applied to a family $\mathcal X$ of Banach spaces on the boundary of $\U$ generates what is called an analytic family $\{X_z: z \in \U\}$ of Banach spaces on $\U$, and these are the {\em first} Rochberg spaces $X_z= \F_{z}$ for the suitably obtained admissible space $\F$. Then, one can \emph{also} form all subsequent families of Rochberg spaces $\Fn_{z}$ for $n>1$ and $z\in \U$. With this setting, our paper orbits around two axis.

The first one is the fact, implicit in Rochberg \cite{rochpac} and made explicit in \cite{cck}, that
Rochberg spaces arrange into exact sequences
\begin{equation}\label{diffprocess}
\begin{CD}
0@>>> \F_z^{(n)} @>>> \F_z^{(n+k)} @>>> \F_z^{(k)} @>>> 0
\end{CD}
\end{equation}
Rochberg \cite{rochpac} also observed that these sequences can be constructed by means of certain ``unbounded nonlinear operators $\Omega$" that we will call \emph{differentials}. We are interested in identfying the differential  $\Omega_z^{k,n}$ that generates the sequence (\ref{diffprocess}) and in using those differentials to derive information about the Rochberg spaces. The crucial example to see these ideas in action is that of the interpolation couple $(\ell_\infty, \ell_1)$, treated in Section \ref{sec:corner}. In classical Banach space theory, the middle space $B$ in an exact sequence $0\to A \to B \to C \to 0$ is usually called a twisted sum of $A$ and $C$, and correspond \cite{kaltpeck,kaltmem,kaltdiff} to a certain type of nonlinear maps called quasilinear maps. Thus, the existence of diagram $(\ref{diffprocess})$ connects the theory of Rochberg families with the theory of twisted sums of Banach spaces and Rochberg's ``unbounded nonlinear operators" with quasilinear maps.

The second axis of the paper is the connection between ``analytic families of Banach spaces" and complex interpolation. At this point, observe that the distinction between what occurs at the border and the interior of the domain $\U$ is fundamental: Rochberg derived spaces only exist on the interior. Thus, given $n>1$, it is not granted the existence of an admissible space $\mathscr T$ so that
$\mathscr T_z^{(1)} = \Fn_{z}$ for every $z\in \U$. Due to this obstruction we introduce the more general notion of \emph{acceptable} space of analytic functions (Definition \ref{def:ac}) and prove in Theorem \ref{th:generalU} one of our main results: given an acceptable space $\F$ of analytic functions on $\U$ and $n>1$ there exists an acceptable space of analytic functions $\mathscr T$ so that $\mathscr T^{(1)}_z = \Fn_z$ with equivalence of norms. Proved that, Rochberg spaces form themselves new ``acceptable families" and thus they are bound to form interpolation families. This is interesting in itself even for practical reasons since, according to Kalton and Montgomery-Smith \cite[p.1151]{kalt-mon} \emph{One of the drawbacks of the complex method is that in general it seems relatively difficult to calculate complex interpolation spaces. There is one exception to this rule, which is the case when one has a pair of Banach lattices}. Rochberg spaces are not Banach lattices and yet we can calculate the spaces obtained by complex interpolation between them.

We are thus ready to describe the organization of the paper. In Section 2, \emph{Spaces of analytic functions and complex interpolation} we recall the definition of admissible space of analytic functions, its connection with the complex interpolation method for families and introduce the notion of acceptable space. The definition of acceptable space requires the using of a Fr\'echet algebra of analytic functions, whose construction is presented in the Appendix. In Section 3, \emph{Rochberg spaces and their entwining exact sequences} we do exactly as the title says; the results can be considered a reformulation of \cite{cck} in the context of this paper. Section 4, \emph{The cornerstone example} presents a detailed study of all higher order Rochberg spaces $\mathscr Z_n$ generated by the interpolation couple $(\ell_\infty, \ell_1)$ at $z=1/2$. The first derived space is $\mathscr Z_1= \ell_2$ and the second $\mathscr Z_2=Z_2$ is the celebrated Kalton-Peck twisted Hilbert space \cite{kaltpeck}. We will obtain precise estimates for the finite dimensional type $2$ constants of all $\mathscr Z_n$ from which we deduce, for instance, that $\mathscr Z_{n+k}$ cannot isomorphic to $\mathscr Z_n\oplus \mathscr Z_k$. In Section 5, \emph{Duality issues}, we introduce, given an admissible space $\F$, a kind of admissible space $\F^\bigstar$ so that $(\F^\bigstar)^{(n)}_z$ can be interpreted as $(\F^{(n)}_z)^*$ maintaining the entwining exact sequences between the spaces. To some extent, this section is the natural extension of duality results from Kalton and Peck \cite{kaltpeck}, Rochberg \cite{rochpac} and Cwikel \cite{Cw2014}. Section 6, \emph{Analytic families of Rochberg spaces and interpolation} is the central section of the paper, where acceptable families enter the game with a substantive role. We show that if $\F$ is an acceptable space then, for each $n\geq 2$, there is an acceptable space $\mathscr T$ such that $\mathscr T_z = \F^{(n)}_z$, even if, as it is shown in the next section, $\mathscr T^{(2)}_z$ can be different from $\F^{(2n)}_z$. As we mentioned above, this is interesting in itself due to the difficulty to calculate complex interpolation spaces other than Banach lattices. Since an acceptable space of analytic functions depends on the complex domain $\U$ on which it is based, it is necessary for technical reasons to move between $\mathbb U$ and the unit disc $\mathbb D$. In particular, the difference between working on the unit strip (classical interpolation with two spaces), on the unit disc (classical interpolation for families) and on a general domain conformally equivalent to them has to be considered. We present a preparatory version on the unit strip (Proposition \ref{prop:couples}) and a general result on the unit disc (Proposition \ref{prop:Fnaccep}). Then, after a Chain and a Leibniz rule, useful to translate results from the disc to  other domains, we state and prove our main result (Theorem \ref{th:generalU}). In Section 7, \emph{Derivation of Rochberg families} we use a bit of homology to 
describe the intertwining exact sequences of Rochberg spaces in ``low dimensions'' and the way in which differentials are interlaced.

In Section 8, \emph{Applications} we solve a few problems in the literature. One of the conclusions is that the results in \cite{ccfg} are the best one can get ... while one is just considering the first Rochberg spaces. It is necessary a close inspection of higher Rochberg spaces to provide the complete panorama and answer some open problems in the literature. Finally, the Appendix, \emph{A Fr\'echet algebra of analytic functions} displays the construction of the Fr\'echet algebra of analytic functions required to sustain the notion of acceptable space.

\subsection{Notation} Domains of the complex plane are displayed in ``blackboard'' fonts: $\mathbb C$ is the complex plane, $\D$ is the unit disk and $\mathbb S$ is the unit strip. The border of a domain $\mathbb U$ will be called $\partial \mathbb U$, even if the unit circle will be always $\mathbb T$. Spaces of vector-valued analytic functions are displayed in ``mathscript'' fonts: $\mathscr F, \mathscr G$ and so on. Spaces and algebras of complex valued analytic functions follow a standard notation: $H_p, N^+, A, A^\infty, W^+$ etc. The superscript $\:^{(n)}$ is always related to derivatives while $X^{n}$ denotes the product of $n$ copies of $X$. We use the following notation for lists of Taylor coefficients. If $A$ is an ordered subset of the nonnegative integers and $f$ is analytic in a neighbourhood of $z\in\mathbb C$, then
$$\tau_A(f)=\left( \frac{f^{(n)}}{n!} \right)_{n\in A}\quad\quad\text{and}\quad\quad
\delta_z\tau_A(f)=\tau_A(f)(z)=
\left( \frac{f^{(n)}(z)}{n!} \right)_{n\in A}.
$$
In particular,
$$
\tau_n(f)=\frac{f^{(n)}}{n!}, \quad \tau_{[n,0]}(f)=\left( \frac{f^{(n)}}{n!},\dots,f \right), \quad \tau_{(n,0]}(f)=\left( \frac{f^{(n-1)}}{(n-1)!},\dots,f \right).
$$
Given $B$ a (commutative, unital) topological algebra  and a Banach space $X$, we say that $X$ is a $B$-module if there is a jointly continuous outer product $B\times X\To X$ satisfying the usual algebraical requirements. Note that in this case, for each fixed $a\in B$, the map $x\mapsto ax$ is a bounded operator on $X$ whose norm will be denoted by $\|a\|_{L(X)}$ if necessary. Also note that $B$ need not to be normed: actually the Fr\'echet algebras $A^\infty_\U$ introduced in the Appendix play a role in this paper.

\section{Spaces of analytic functions and analytic families}\label{sec:analytic-family}

This Section introduces the spaces of analytic functions that we shall use along the paper.
First, we recall the standard notion  of an admissible space of analytic functions taken from Kalton and Montgomery-Smith \cite{kalt-mon}:

\begin{definition}\label{def:ad} Let $\U$ be an open set of $\mathbb C$
conformally equivalent to the disc $\mathbb D=\{z\in\mathbb C: |z|<1\}$ and let $\Sigma$ be a complex
Banach space. A Banach space $\mathscr F$ of analytic functions
$f:\U\To \Sigma$ is said to be admissible provided:
\begin{itemize}
\item[(a)] For each $z\in \U$, the evaluation map
$\delta_z:\mathscr F\To \Sigma$ is bounded.
\item[(b)] If
$\varphi:\U\To\mathbb D$ is a conformal equivalence and $f:\U\To \Sigma$ is analytic, then
$f\in\mathscr F$ if and only if $\varphi f\in\mathscr F$ and
$\|\varphi f\|_{\mathscr F}= \|f\|_{\mathscr F}$.
\end{itemize}
\end{definition}
Condition (b) is basically a boundary condition and implies that $\F$ is isometric to the subspace of functions vanishing at a given $u\in\U$, the isometry being given by multiplication by a conformal map $\varphi:\U\to\D$ such that $\varphi(u)=0$.\medskip

It turns out that admissibility is a too rigid notion for our present purposes and so we need to introduce a weak version that we have called
(for which we apologize in advance) \emph{acceptable spaces}. This notion requires using the algebras $A^\infty_\U$, whose definition
and properties can be found in the Appendix.

\begin{definition}\label{def:ac}Let $\U$ and $\Sigma$ be as before. An acceptable space is a Banach space of analytic functions $f:\U\To \Sigma$ having the following properties:
\begin{itemize}
\item[(a)] The evaluation maps $\delta_z:\mathscr F\To \Sigma$ are continuous.
\item[(b)] $\mathscr F$ is a module over the algebra $A^\infty_\U$ under pointwise multiplication, that is, the pointwise product $A^\infty_\U\times\mathscr F\longrightarrow \mathscr F$ is jointly continuous.
\item [(c)] For each conformal mapping $\varphi:\U\To \mathbb D$ there is a constant $K[\varphi]$ such that, if $f:\mathbb U\to \Sigma$ is analytic then $\varphi f\in\mathscr F$ if and only if $f\in\mathscr F$ and then $K[\varphi]^{-1}\|\varphi f\|_\mathscr F\leq \|f\|_\mathscr F\leq K[\varphi]\|\varphi f\|_\mathscr F$.
\end{itemize}
\end{definition}


\begin{lemma}
Every admissible space of analytic functions is acceptable.
\end{lemma}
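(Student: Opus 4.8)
The plan is to verify, for an admissible space $\F$, the three conditions of Definition~\ref{def:ac}. Conditions (a) literally coincide, since a linear map into a Banach space is bounded if and only if it is continuous, and condition (c) of acceptability follows from condition (b) of Definition~\ref{def:ad} by taking $K[\varphi]=1$, an isometry satisfying both displayed inequalities trivially. The algebraic module axioms are automatic once $\F$ is known to be stable under pointwise multiplication by elements of $A^\infty_\U$, so the entire content of the lemma is condition (b): the joint continuity of the pointwise product $A^\infty_\U\times\F\To\F$.

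First I would reduce to the unit disc. Fix a conformal equivalence $\psi\colon\D\To\U$ and set $\G:=\{g\colon\D\To\Sigma\text{ analytic}: g\circ\psi^{-1}\in\F\}$, normed by $\|g\|_\G=\|g\circ\psi^{-1}\|_\F$. Since $\phi\circ\psi^{-1}\colon\U\To\D$ is a conformal equivalence for every conformal self-map $\phi$ of $\D$, condition (b) of Definition~\ref{def:ad} for $\F$ shows at once that $\G$ is admissible on $\D$, and $g\mapsto g\circ\psi^{-1}$ is an isometric isomorphism $\G\To\F$. By the construction of the Fréchet algebras in the Appendix, $h\mapsto h\circ\psi^{-1}$ is a topological algebra isomorphism $A^\infty_\D\To A^\infty_\U$, and under these identifications $\F$ is an $A^\infty_\U$-module as soon as $\G$ is an $A^\infty_\D$-module. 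Thus it suffices to treat admissible spaces on $\D$.

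So let $\F$ be admissible on $\D$. The identity $z\mapsto z$ is a conformal equivalence $\D\To\D$, so multiplication by $z$ is an isometry of $\F$ by condition (b); iterating, multiplication by $z^k$ is an isometry for every $k\geq 0$. Next I would use that $A^\infty_\D$ embeds continuously into the Wiener algebra $W^+$: writing $g(z)=\sum_k\widehat g(k)z^k$ one has $\widehat g(k)=\widehat{g''}(k-2)/\big(k(k-1)\big)$ for $k\geq 2$, so $\sum_k|\widehat g(k)|$ is finite and dominated by a continuous seminorm of $g$ in $A^\infty_\D$ (for instance, by $\sup_{\overline{\D}}|g|$ and $\sup_{\overline{\D}}|g''|$ up to a constant). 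Given $g\in A^\infty_\D$ and $f\in\F$, the partial sums $S_N=\sum_{k=0}^N\widehat g(k)\,z^kf$ lie in $\F$ and satisfy $\|S_N-S_M\|_\F\leq\big(\sum_{M<k\leq N}|\widehat g(k)|\big)\|f\|_\F$, so $(S_N)$ is Cauchy in the Banach space $\F$ and converges to some $F\in\F$; continuity of the evaluations $\delta_z$ gives $F(z)=\lim_N S_N(z)=g(z)f(z)$ for every $z$. Hence $gf\in\F$ with $\|gf\|_\F\leq\big(\sum_k|\widehat g(k)|\big)\|f\|_\F$. Since the right-hand side is a continuous seminorm of $g$ and the product is bilinear, the map $A^\infty_\D\times\F\To\F$ is jointly continuous, which is (b).

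The only step that is not a bare unwinding of definitions is the conformal reduction: one must know that the change of variable $\psi$ carries the Fréchet algebra $A^\infty_\U$ onto $A^\infty_\D$ compatibly with the module structures, and this is precisely where the explicit construction of $A^\infty_\U$ in the Appendix is used. The rest is elementary; the single quantitative input is the trivial bound $\sum_k|\widehat g(k)|<\infty$ for $g\in A^\infty_\D$, with the sum controlled by finitely many of the defining seminorms.
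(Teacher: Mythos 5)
Your proposal is correct and follows essentially the same route as the paper: the whole content is condition (b), and both arguments rest on the observation that admissibility makes the space a contractive module over the (pulled-back) Wiener algebra $W^+$ — since multiplication by powers of a conformal map is an isometry and the partial sums are Cauchy — combined with the continuous inclusion $A^\infty\subset W^+$ from the Appendix. The only cosmetic difference is that you transfer $\F$ to the disc before running the argument, whereas the paper works directly on $\U$ with $\psi^*[W^+]$.
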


\begin{proof}
It suffices to check (b). Assume $\F$ is admissible on $\U$ and let us fix a conformal map $\psi:\U\To\D$. Then, for $f\in\F$ we have $\psi f\in\F$, with $\|\psi f\|_\F=\|f\|_\F$. Thus,
if $(c_n)_{n\geq 0}$ is absolutely summable and $g(u)=\sum_{n\geq 0}c_n\psi(u)^n$, then $g f\in\F$, with $\|gf\|_\F\leq \left(\sum_{n\geq 0}|c_n|\right)\|f\|_\F$. This implies that $\F$ is a ``contractive'' module over $\psi^*[W^+]$ under the pointwise multiplication. The definition of $\psi^*[W^+]$ is given in the Appendix. But
$\psi^*[W^+]$ contains $A^\infty_\U$ with continuous inclusion (see Lemma~\ref{lem:AW}); therefore
$\F$ is an $A^\infty_\U$-module as well. \end{proof}

\subsection{Calder\'on spaces}

The simplest examples of admissible spaces are the Calder\'on spaces associated to Banach interpolation couples.
Interpolation for couples is usually done in the unit strip $\St=\{0<\Re(z)<1\}$.
In this paper we need to be careful with the spatial variable which is used to differentiate functions and thus the size of the strip where the spaces are placed needs to be taken in consideration; see Section~\ref{sec:chainrule}. So, given real numbers $a<b$ we put $\St_{a,b}=\{a<\Re(z)<b\}.$
Now suppose that $(X_a,X_b)$ is a Banach couple: this just means that  $X_a$ and $X_b$ are Banach spaces linear and continuously embedded into a third Banach space $\Sigma$.\medskip

The Calder\'on space with simplest definition is $\C=\mathscr C(X_a,X_b)$, which consists of those bounded analytic functions $f:\St_{a,b}\To \Sigma$ that extend continuously to the closure of $\St_{a,b}$ and, denoting again by $f$ the extension, satisfy the boundary condition that for $j=a,b$ the restriction $t\in\mathbb R\longmapsto f(j+it)\in X_j$ is continuous and bounded. The norm of the space $\mathscr C$ is defined by $\|f\|_{\mathscr C}= \sup\{\|f(j + it)\|_{X_j} : t \in \mathbb{R}, j = a , b\}.$ A useful variant is the space
$$\mathscr C_0=\{f\in\mathscr C: \|f(j + it)\|_{X_j} \to 0 \text{ as } |t|\to\infty, j=a,b\},
$$ which is a closed subspace of $\mathscr C$. It is easy to prove that, if $f\in\mathscr C$, then for every $z\in\St_{a,b}$ the function $w\longmapsto e^{(w-z)^2}f(w)$ belongs to $\mathscr C_0$. Moreover, if $\Delta$ is \emph{any} dense subset of $X_a\cap X_b$, then the functions of the form
\begin{equation}\label{petunin}
f(z)= e^{\delta z^2}\sum_{1\leq i\leq k} e^{\lambda_k z}x_k\quad\quad(x_k\in\Delta, \lambda_k,\delta\in\mathbb R, \delta>0)
\end{equation}
are dense in $\mathscr C_0$; see \cite[Chapter~IV, Theorem 1.1, p. 220]{petunin}
or \cite[Lemma 4.2.3]{BL}. We shall denote the space of such functions as $\mathscr C_{00}$.

\subsection{Interpolation families}\label{sec:int-fam} The basic source of admissible spaces is the complex interpolation method for families. The method we present here, which is that of \cite{Correa2018}, is a slight modification of the method from \cite{Coifman1982}.\medskip

Let $\U$ be a domain of the complex plane conformally equivalent to the disc and let $\varphi:\D\To\U$ be a fixed conformal map. Conformal maps $\varphi$ belong to the Smirnov class $N^+$ \cite{duren} and so they have nontangencial limits for almost every $z\in\mathbb T$. Let us assume from now on that $\varphi$ extends to a surjective continuous function $\tilde{\D}\To \overline{\U}$ (there is no need to relabel), where  $\tilde{\D}$ is a subset of the closed disc which contains $\D$ together with almost every point of $\T$.
In particular $\varphi$ maps $\T\cap \tilde{\D}$ onto $\partial\U$ (up to a null set). Note that this is actually a property of the domain, and domains such as spiral of infinite turns approaching the unit circle lacks it. When $\U=\mathbb S$ one can use the conformal equivalence given  by the formula
$$
\varphi(z)=\frac{1}{2}+\frac{2i}{\pi}\log\frac{z+1}{1-z}
$$
which extends to the closed disc, except $z=\pm1$.\medskip

\begin{definition} A family $\mathcal X = \{X_\omega : \omega \in \partial \U\}$ of Banach spaces is an interpolation family with containing space $\Sigma$, intersection space $\Delta$ and containing function $k$ if:
\begin{itemize}
\item $\Sigma$ is a Banach space for which there are linear continuous embeddings $X_\omega \rightarrow \Sigma$. We will identify $X_\omega$ with its image in $\Sigma$ from now on.
\item $\Delta$ is a subspace of $\bigcap_{\omega \in \partial\U} X_\omega$ such that for every $x \in \Delta$ the function $z\in\T\bigcap\tilde\D \mapsto \|x\|_{\varphi(z)}$ is measurable and
\[
\int_\T \log^+ \|x\|_{\varphi(z)} d|z| < \infty,
\]
where $\log^+ t = \max(0, \log t)$ for $t>0$.
\item $k : \partial \U \rightarrow (0, \infty)$ is a measurable function such that
\[
\int_\T \log^+ k(\varphi(z)) d|z| < \infty,
\]
and $\|x\|_{\Sigma} \leq k(u)\|x\|_{u}$ for every $u \in \partial\U$ and every $x \in \Delta$.
\end{itemize}\end{definition}

If no risk of confusion arises we will simply say that $\mathcal X$ is an interpolation family. Given an interpolation family $\mathcal{X}$, we define $\mathscr{G} = \mathscr{G}({\mathcal{X}})$ as the space of all functions on $\U$ of the form $g = \sum_{j=1}^n g_j x_j$, where  $g_j \circ \varphi$ is in the Smirnov class $N^+$, $x_j \in \Delta$,  for all $j$, and
\begin{equation}\label{eq:esssup}
\|g\| = \esssup_{u \in \partial \U} \|g(u)\|_{u} < \infty.
\end{equation}
Here, $\partial\U$ carries the image of the measure $d|z|$ under the map $\varphi$.
Notice that this is well-defined because functions in the Smirnov class $N^+$ have a.\ e.\ nontangential limits on $\mathbb{T}$ and it does not depend of $ \varphi$, because if $\psi:\D\To\U$ is another conformal map, then $\varphi \circ \psi^{-1}$ is an automorphism of the disc.\medskip

Let us briefly explain how these spaces fit into the general framework described earlier. We just state the basic facts and refer the reader to \cite{Coifman1982, Correa2018, tesis} for more details. First, the evaluations $\delta_u:\G\To \Sigma$ are bounded. This fact depends on the hypotheses made on the containing function $k$. Indeed, by a result of Szeg\H{o} (see \cite[Proposition 1.1]{Coifman1982}), (any measurable extension of) the function $k\circ\varphi:\T\To\mathbb [0,\infty)$ has an associated ``outer'' function in the Smirnov class, which means that there is $K\in N^+$ such that $|K(z)|k(\varphi(z))=1$ for almost everywhere $z\in\T$, where the extension of $K$ to $\T$ is defined taking nontangential limits. It is now easy to check that for each $u\in \U$ one has $\|\delta_u:\G\To \Sigma\|\leq |K(z)|$, where $\varphi(z)=u$; see \cite[Proposition 2.3.52]{tesis}.\medskip

As a rule, the space $\G$ will fail to be complete; however it always fulfils conditions (a) and (b) in Definition~\ref{def:ad}:
Let $\F=\F(\mathcal X)$ be its completion and observe that the continuity of the evaluations at points of $\U$ allows us to identify $\F$ as a Banach space of analytic functions $\U\To \Sigma$, on which the point evaluations remain bounded with the same norm (see \cite[Proposition 2.3]{Coifman1982}). About condition (b), keeping an eye in Definitions~\ref{def:ad} and ~\ref{def:ac} let us observe the trivial fact that $\G$,
and therefore $\F$, are contractive modules over $H^\infty(\U)$, since every bounded analytic function on the disc belongs to the Smirnov class $N^+$. In particular, if $h:\U\To\D$ is a conformal map and $f\in\F$, then $hf\in\F$ and $\|hf\|_\F=\|f\|_\F$. It then remains to prove that $f\in\F$ whenever $hf\in\F$. This is related to the coincidence of the interpolation spaces associated to $\G$ and $\F$. To explain this, and following \cite{Coifman1982}, let us fix $z\in\U$ and consider the following two spaces: the first one, often denoted by $X{\{z\}}$, is the completion of the intersection space $\Delta$ equipped with the norm $
x\in\Delta\longmapsto\inf\{\|g\|_\F: g\in\G \text{ and } x=g(z)\}.$ The definition makes sense because for every $x\in\Delta$ there is $g\in\G$ such that $x=g(z)$. The other space is
$$
X{[z]}=\{x\in \Sigma: x=f(z) \text{ for some } f\in\F\},
$$
equipped with the quotient norm. To see how these spaces are related, have a look at the diagram
\begin{equation}
 \xymatrix{ 0 \ar[r] & \ker \delta_z \ar[r] & \F \ar[r] & \F/\ker \delta_z  \ar[r] & 0\\
 0 \ar[r] & \overline{\ker \delta_z \cap \G} \ar[u] \ar[r] & \overline{\G}\ar@{=}[u] \ar[r] & \F/\overline{\ker \delta_z \cap \G} \ar[u]^{Q}  \ar[r] & 0
 }
\end{equation}
Here, $Q\left( f+\overline{\ker \delta_z \cap \G}\right) = f+\ker \delta_z$ is an isometric quotient map (it maps the open unit ball of $\F/\overline{\ker \delta_z \cap \G} $ onto that of $\F/\ker \delta_z$). A moment's reflection suffices to realize that each nonzero element of $\ker Q$ corresponds to an element $f\in \ker \delta_z$ which is not in $\overline{\ker \delta_z \cap \G}$, and so $\ker Q=\ker \delta_z /\overline{\ker \delta_z\cap \G}$. Now observe that $X{\{z\}} = \F/\overline{\ker \delta_z \cap \G}$ and $X{[z]} = \F/\ker \delta_z$; thus, $Q$ induces an isometric quotient map of $X{\{z\}}\To X{[z]}$ ``extending'' the inclusion $\Delta\To \Sigma$. This map is injective (that is, $X{\{z\}}=X{[z]}$) if and only if $\ker \delta_z \cap \G$ is dense in $\ker \delta_z$. On the other hand, if $h:\U\To\D$ is a conformal map vanishing at $z$, we have $\ker \delta_z \cap\G=h\G$ in the sense that each function in $\G$ vanishing at $z$ has the form $h g$, for some $g\in \G$. Using this, the following lemma  is not hard to prove and it concludes the argument:

\begin{lemma}\label{lem:KGvsK} The following statements are equivalent:
\begin{itemize}
\item $X{\{z\}}=X{[z]}$.
\item $\ker \delta_z\cap \G$ is dense in $\ker \delta_z$.
\item $\ker \delta_z = h\cdot \F$.\hfill $\square$
\end{itemize}
\end{lemma}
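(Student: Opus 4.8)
The plan is to dispose of the equivalence of the first two statements using the material already assembled above, and then to bridge to the third statement by means of the identity $\ker\delta_z\cap\G=h\G$ together with the fact that multiplication by a conformal map is an isometry of $\F$.

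First I would fix a conformal equivalence $h:\U\To\D$ with $h(z)=0$, which exists because $\U$ is conformally equivalent to $\D$. Then I would isolate the three facts that carry the argument: (i) as recorded just above the lemma, the operator $f\mapsto hf$ is an isometry of $\F$ into itself (indeed $\|hf\|_\F=\|f\|_\F$), so that $h\F$ is a complete, hence closed, subspace of $\F$; (ii) $h\F\subseteq\ker\delta_z$, simply because $h(z)=0$; and (iii) $\ker\delta_z\cap\G=h\G$, as noted before the lemma, one inclusion following from $h(z)=0$ together with $h\in H^\infty(\U)$ and the other from the quoted fact that every $g\in\G$ vanishing at $z$ has the form $hg'$ with $g'\in\G$.

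For the first equivalence I would argue as the preceding paragraphs essentially do: the map $Q:X\{z\}\To X[z]$ is an isometric quotient map with $\ker Q=\ker\delta_z/\overline{\ker\delta_z\cap\G}$, so $X\{z\}=X[z]$ holds exactly when $Q$ is injective, that is, when $\overline{\ker\delta_z\cap\G}=\ker\delta_z$, which is the second statement. It then remains to prove that $\ker\delta_z\cap\G$ is dense in $\ker\delta_z$ if and only if $\ker\delta_z=h\F$. If $\ker\delta_z=h\F$, then, since $\G$ is dense in $\F$ and $f\mapsto hf$ is an isometry, $h\G$ is dense in $h\F=\ker\delta_z$; as $h\G=\ker\delta_z\cap\G$ by (iii), this gives the density. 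Conversely, if $\ker\delta_z\cap\G=h\G$ is dense in $\ker\delta_z$, then, because $h\F$ is closed by (i) and contains $h\G$, we get $\ker\delta_z=\overline{h\G}\subseteq h\F$; the reverse inclusion is (ii), whence $\ker\delta_z=h\F$.

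I do not anticipate a genuine obstacle here: the whole content has been prepared in the preceding discussion. The two points deserving a little care are that the conformal map $h$ can be chosen to vanish at the prescribed point $z$, and that the closedness of $h\F$ used in the converse implication really does rely on the completeness of $\F$ — which is exactly why the argument must be run at the level of $\F$ rather than of $\G$, where $h\G$ need not be closed.
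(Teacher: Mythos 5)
Your proof is correct and follows exactly the route the paper intends (the lemma's proof is omitted there, but all the ingredients you use are laid out in the preceding discussion): the equivalence of the first two statements via injectivity of the isometric quotient map $Q$ with $\ker Q=\ker\delta_z/\overline{\ker\delta_z\cap\G}$, and the bridge to the third via $\ker\delta_z\cap\G=h\G$ together with the fact that multiplication by $h$ is an isometry of $\F$, so that $h\F$ is closed and $\overline{h\G}=h\F$. Nothing to add.
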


\begin{definition} An interpolation family  $\mathcal X=\{X_\omega : \omega \in \partial \U\}$ is said to be admissible at $z\in \U$ if it satisfies the equivalent conditions recorded in the preceding Lemma, and it is said to be admissible if it is admissible at every $z\in\U$\end{definition}

Observe that an interpolation family $\mathcal X$ is admissible if and only if the space $\F$ obtained from it is admissible in the sense of Definition~\ref{def:ad}.

\section{Rochberg spaces and their entwining exact sequences}\label{sec:entwining}

Let us translate the basic facts of \cite{cck} to the context of acceptable spaces. If we fix $z\in\U$, the map $\delta_z:\F\to \Sigma$ is continuous and $\F/\ker{\delta_z}$ is a Banach space which is isometric to
$$
\F_z=\{w\in \Sigma: w=f(z) \text{ for some } f\in\F\},
$$
endowed with the quotient norm $\|w\|_{\F_z}=\inf_{w=f(z)}\|f\|_\F$. The family $(\F_z)_{z\in\U}$ will be called the analytic family of Banach spaces associated to $\F$, which is coherent with the traditional use when $\F$ is admissible (cf. \cite[\S~10]{kalt-mon}) and, in particular, when $\F$ arises from an admissible interpolation family, as in Section~\ref{sec:int-fam}. In this case we have $\F_z=X{[z]}=X\{x\}$.

The map $\delta_z^{(n)}:\mathscr F\To \Sigma$, evaluation of the $n$-{th} derivative at $z$, is bounded for all
$z\in \U$ and all $n\in \N$ by the boundedness of $\delta_z$, the
definition of derivative and the Banach-Steinhaus theorem.
Thus, it makes sense to consider the Banach spaces
\begin{equation}\label{eq:Fker}
\mathscr F/\bigcap_{i<n}\ker\delta_{z}^{(i)}\quad\quad(n\in\N).
\end{equation}
 As before these spaces are isometric to the Rochberg spaces
\begin{align}\label{eq:Xnz}
\nonumber \F^{(n)}_{z}&=\{w\in \Sigma^n: w=\tau_{(n,0]}f(z) \text{ for some } f\in\F\}\\
&=\left\{(w_{n-1},\dots,w_{0})\in \Sigma^n: w_i=	\frac{f^{(i)}(z)}{i!}\text{ for some } f\in \mathscr F \text{ and all } 0\leq i<n
\right\},
\end{align}
endowed with the quotient norm: the norm of $w=(w_{n-1},\dots,w_{0})$ in $
\F^{(n)}_{z}$ is the infimum of the norms of the functions of $\F$ fitting in (\ref{eq:Xnz}).\medskip

For fixed $z$, the spaces $\F^{(n)}_{z}$ can be arranged into exact sequences in a very natural way: this is implicit in \cite{rochpac}, even if the syntagma ``exact sequence" does not appear, and a complete treatment can be found in \cite{cck}. Indeed, if for $1\leq n,
k<m$ we denote by $\imath_{n,m}:\Sigma^n\to \Sigma^m$ the inclusion on the
left given by
$\imath_{n,m}(x_n,\dots,x_1)=(x_n,\dots,x_1,0\dots,0)$ and by
$\pi_{m,k}:\Sigma^m\to \Sigma^k$ the projection on the right given by
$\pi_{m,k}(x_m,\dots, x_k,\dots,x_1)=  (x_k,\dots,x_1)$, then
$\pi_{m,k}$ restricts to an isometric quotient map of
$
\F^{(m)}_{z}$ onto $
\F^{(k)}_{z}$ (this is trivial) and $\imath_{n,m}$ is an isomorphic embedding of
$
\F^{(n)}_{z}$ into $
\F^{(m)}_{z}$ (this can be proved as \cite[Proposition 2(a)]{cck}) and thus, see \cite[Theorem 4]{cck}, for each $n,k\in\mathbb N$ there is an exact sequence of Banach spaces and operators
\begin{equation}\label{eq:nn+kk}
\begin{CD}
0@>>> \F^{(n)}_{z} @>\imath_{n,n+k}>> \F^{(n+k)}_{z}@> \pi_{n+k,k} >> \F^{(k)}_{z}@>>> 0
\end{CD}
\end{equation}

To describe the sequences (\ref{eq:nn+kk}) as twisted sums we will use the maps $\Omega^{k,n}: \F^{(k)}_{z}\To \Sigma^n$ defined as follows: we fix $\e\in(0,1)$, and, for each $x=(x_{k-1},\dots,x_{0})$ in
$ \F^{(k)}_{z}$, select $f_x\in\F$ such that $x=\tau_{(k,0]}f_x(z)$, with $\|f_x\|\leq (1+\e)\|x\|$, in such a way that $f_x$ depends homogeneously on $x$. Then define
$$
\Omega^{k,n}(x)=\tau_{(n+k,k]}f_x(z).
$$
We could emphasize the fact that $\Omega^{k,n}$ depends on $z$ by adding the subscript $z$, if necessary. It is clear that it also depends on the choice of $f_x$, but different choices of $f_x$ only produce bounded perturbations of the same map.  Any $\Omega^{k,n}$
defined in this way is a quasilinear map (see the definition below) from $\F^{(k)}_{z}$ to $\F^{(n)}_{z}$, which means that there is a constant $C$ such that, for every $x,y\in \F^{(k)}_{z}$ the difference
$
\Omega^{k,n}(x+y)-
\Omega^{k,n}(x)-
\Omega^{k,n}(y)
$, which belongs a priori to $\Sigma^n$, actually falls into $\F^{(n)}_{z}$ and obeys the estimate
$$
\|\Omega^{k,n}(x+y)-
\Omega^{k,n}(x)-
\Omega^{k,n}(y)\|_{\F^{(n)}_{z}}\leq C\left(\|x\|_{\F^{(k)}_{z}}+ \|y\|_{\F^{(k)}_{z}}	\right).
$$
The map $\Omega^{k,n}$ can be used to form the twisted sum space
$$
{\F^{(n)}_{z}}\oplus_{\Omega^{k,n}}{\F^{(k)}_{z}}= \left\{(y,x)\in \Sigma^{n+k}: y-\Omega^{k,n}(x)\in {\F^{(n)}_{z}}, x\in {\F^{(k)}_{z}} \right\},
$$
endowed with the quasinorm
\begin{equation}\label{eq:eqnorm}
\|(y,x)\|_{\Omega^{k,n}}= \left\| y-\Omega^{k,n}(x)\right\|_{\F^{(n)}_{z}}+ \| x\|_{\F^{(k)}_{z}}.
\end{equation}
It turns out that ${\F^{(n)}_{z}}\oplus_{\Omega^{k,n}}{\F^{(k)}_{z}}$ and ${\F^{(n+k)}_{z}}$ are the same space, and that (\ref{eq:eqnorm}) is a quasinorm equivalent to the norm of  ${\F^{(n+k)}_{z}}$. Although the explicit use of quasilinear maps is marginal in this paper it will be convenient to record the definition here:

\begin{definition}\label{def:quasilinear}
Let $X$ and $Y$ be quasinormed spaces and let $H$ be a linear space containing $Y$. A homogeneous mapping $\Phi: X\To H$ (not $Y$) is said to be quasilinear from $X$ to $Y$ if:
\begin{itemize}
\item[(a)] $\Phi(x+y)-\Phi(x)-\Phi(y)\in Y$ for all $x,y\in X$.
\item[(b)] There is a constant $C$ such that  $\|\Phi(x+y)-\Phi(x)-\Phi(y)\|_Y\leq C\big(\|x\|_X+\|y\|_X\big)$ for all $x,y\in X$.
\end{itemize}
\end{definition}
Condition (a) guarantees that
$
Y\oplus_\Phi X=\{(h,x)\in H\times X: h-\Phi(x)\in Y\}
$
is a linear subspace of $H\times X$, while (b) and the homogeneous character of $\Phi$ imply that the formula
$
\|(h,x)\|_\Phi=\|h-\Phi(x)\|+\|x\|
$
defines a quasinorm on $Y\oplus_\Phi X$, which is equivalent to a norm when $\Phi$ arises as a derivation. The map $\imath:Y\To Y\oplus_\Phi X$ given by $\imath(y)=(y,0)$ preserves the (quasi) norms and the map $\pi:Y\oplus_\Phi X\To X$ given by $\pi(h,x)=x$ takes the unit ball of $Y\oplus_\Phi X$ onto that of $X$. These form a short exact sequence
\begin{equation}\label{eq:YZX}
\xymatrix{
0\ar[r] & Y \ar[r]^-\imath & Y\oplus_\Phi X \ar[r]^-\pi & X\ar[r] & 0
}
\end{equation}
  that shall be referred to as the sequence generated by $\Phi$. We say that $\Phi$ is trivial (as a quasilinear map from $X$ to $Y$), and we write $\Phi\sim 0$, if (\ref{eq:YZX}) splits, that is, if there is an operator $P: Y\oplus_\Phi X\To Y$ such that $P\,\imath={\bf I}_Y$, equivalently, there is an operator $S: X\To Y\oplus_\Phi X$ such that $\pi\,S={\bf I}_X$. This happens if and only if there is a, not necessarily continuous, linear map $L:X\To H$ such 
  that $\Phi-L$ is bounded from $X$ to $Y$ in the sense that $\|\Phi(x)-L(x)\|_Y\leq M\|x\|_X$ for some constant $M$ and all $x\in X$.
\medskip

 We conclude this rugged introduction emphasizing the compatibility of the sequences (\ref{eq:nn+kk}) passing through a given $\F^{(m)}_z$. Indeed, if  $m=k+n=i+j$, with $k<i$ say,
then the following diagram is commutative:
\begin{equation}\label{poz}
\begin{CD}
&&0&&0\\ &&@VVV @VVV\\
&&\F^{(j)}_z@= \F^{(j)}_z\\
&&@VVV @VVV \\
 0@>>> \F^{(n)}_z@>>>  \F^{(m)}_z@>>>  \F^{(k)}_z@>>>0\\
&&@VVV@VVV@|\\
 0@>>>\F^{(n-j)}_z@>>>  \F^{(i)}_z@>>>  \F^{(k)}_z@>>> 0\\
&&@VVV@VVV\\
&&0&&0
\end{CD}
\end{equation}

Notation has been lightened up by understanding that unlabelled arrows $\Fn_z\To \F^{(m)}_z$ are $\imath_{n,m}$ if $n\leq m$ and $\pi_{n,m}$ if $n\geq m$.\medskip

Sometimes it is convenient to replace the starting acceptable space $\F$ by another one with more convenient properties. If properly done, this will affect very little the resulting sequences:

\begin{lemma}\label{lem:Gzn=Fzn}
Let $\F$ and $\G$ be acceptable spaces of functions $\U\To \Sigma$. Assume that $\G\subset \F$ and that the inclusion is continuous. Fix $z\in\U$. If $\F_z=\G_z$, necessarily with equivalent norms, then $\Fn_z=\G_z^{(n)}$, with equivalent norms, for every $n\geq 1$ and the sequences induced by $\G$ at $z$ agree with those of $\F$.
\end{lemma}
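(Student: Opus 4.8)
The plan is to reduce everything to the case $n=1$ by realizing each Rochberg space as a quotient of the ambient space by a power of a conformal map. Fix a conformal equivalence $\varphi:\U\To\D$ with $\varphi(z)=0$; being biholomorphic, $\varphi$ has a simple zero at $z$ and no other zero in $\U$, so $\varphi^n$ vanishes to order exactly $n$ at $z$ and nowhere else. The first step is the identity
$$
\bigcap_{i<n}\ker\delta_z^{(i)}=\varphi^n\F\quad\text{and likewise}\quad\bigcap_{i<n}\ker\delta_z^{(i)}=\varphi^n\G.
$$
The inclusion $\supseteq$ is clear; for $\subseteq$, a function vanishing to order $\geq n$ at $z$ is divisible by $\varphi^n$ in the ring of analytic functions on $\U$, and property (c) of Definition~\ref{def:ac} applied $n$ times keeps the quotient $f/\varphi^n$ inside $\F$ (resp.\ $\G$). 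Consequently $\Fn_z$ is isometric to $\F/\varphi^n\F$ and $\G_z^{(n)}$ to $\G/\varphi^n\G$, both genuine Banach spaces since the subspaces are intersections of kernels of bounded evaluation maps.

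Next I would study the natural map $j_n:\G/\varphi^n\G\To\F/\varphi^n\F$ induced by the inclusion $\G\subset\F$. It is well defined (as $\varphi^n\G\subset\varphi^n\F$) and bounded (as $\G\hookrightarrow\F$ is), and it is injective: if $g\in\G$ lies in $\varphi^n\F$ then $g$ vanishes to order $\geq n$ at $z$, hence $g\in\varphi^n\G$ by the identity above. The crux is the \emph{surjectivity} of $j_n$, i.e.\ that every $f\in\F$ agrees to order $n$ at $z$ with some $g\in\G$; this is where the hypothesis $\F_z=\G_z$ is used, and I would prove it by induction on $n$. The case $n=1$ is exactly the assumed set equality $\F_z=\G_z$. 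For the step, given $f\in\F$, the case $n$ yields $g_1\in\G$ and $\tilde f\in\F$ with $f=g_1+\varphi^n\tilde f$; applying the case $n=1$ to $\tilde f$ gives $\tilde g\in\G$ with $\tilde f-\tilde g\in\varphi\F$; then $g:=g_1+\varphi^n\tilde g$ lies in $\G$ (by (c) iterated, or since $\varphi^n\in A^\infty_\U$ acts on $\G$ by (b)) and $f-g=\varphi^n(\tilde f-\tilde g)\in\varphi^{n+1}\F$. Thus $j_n$ is a continuous linear bijection between Banach spaces, and the open mapping theorem upgrades it to an isomorphism, which is precisely $\Fn_z=\G_z^{(n)}$ with equivalent norms.

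For the last assertion, note that the maps $\imath_{n,m}$ and $\pi_{m,k}$ in~(\ref{eq:nn+kk}) are purely coordinatewise operations on $\Sigma^m$, independent of $\F$ or $\G$; so after the identifications $\Fj_z=\G_z^{(j)}$ the two sequences share the same maps and fit into a commutative ladder whose vertical arrows are the isomorphisms just produced. Equivalently, at the level of differentials, a near-minimal $\G$-norm selector $g_x$ for a point $x\in\G_z^{(k)}=\Fk_z$ is also a near-minimal $\F$-norm selector (by the continuous inclusion together with the norm equivalence just proved), so $\Omega^{k,n}_\F$ and $\Omega^{k,n}_\G$ differ by a map bounded from $\Fk_z$ to $\Fn_z$, hence generate equivalent extensions.

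The main obstacle is the surjectivity of $j_n$; everything else is formal bookkeeping. The only real point to watch in that step is that the "divide out $\varphi^n$'' operation must land back in $\G$ rather than merely in $\F$, which is exactly what property (c) of acceptable spaces guarantees and why the induction is threaded through the $n=1$ hypothesis one factor of $\varphi$ at a time. A minor preliminary check is that $\varphi$ and its powers are indeed multipliers of both $\F$ and $\G$, which holds by (b)--(c) of Definition~\ref{def:ac}.
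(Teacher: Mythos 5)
Your proof is correct. The only point that needed care — that division by $\varphi^n$ stays inside $\F$ (resp.\ $\G$) — is exactly what property (c) of Definition~\ref{def:ac} provides, and you use it properly; the base case of your induction uses only the set equality $\F_z=\G_z$, with all norm equivalences recovered at the end via the open mapping theorem, which is clean.

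Your route, however, is not the one the paper takes. The paper's proof places the two families of entwining exact sequences~(\ref{eq:nn+kk}), for $\G$ and for $\F$, in a commutative ladder whose vertical arrows are the formal inclusions $\G^{(m)}_z\To\F^{(m)}_z$, and then runs an ``easy induction'' that is in essence a three-lemma diagram chase: if the outer inclusions are onto (with equivalent norms), so is the middle one. This leans on the exactness of~(\ref{eq:nn+kk}), which the paper imports from Section~\ref{sec:entwining}. You instead work at the level of the function spaces: you identify $\bigcap_{i<n}\ker\delta_z^{(i)}$ with the ideal $\varphi^n\F$ (resp.\ $\varphi^n\G$) using acceptability (c), describe $\Fn_z$ and $\G^{(n)}_z$ as the quotients $\F/\varphi^n\F$ and $\G/\varphi^n\G$, and prove surjectivity of the induced formal map by a successive-correction induction that threads the $n=1$ hypothesis one factor of $\varphi$ at a time. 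In effect you reprove, from (c), the divisibility ingredient that underlies the exactness the paper quotes, so your argument is more self-contained and also produces an explicit $\G$-representative matching a given jet; the paper's argument is shorter because it recycles the machinery already set up for the Rochberg sequences. Both arguments deliver the same conclusions, including the final identification of the sequences and the differentials up to bounded perturbation, which you justify correctly by observing that a near-extremal in $\G$ is, up to a fixed constant, a near-extremal in $\F$.
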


The proof is an easy induction argument once one realizes that, under the hypothesis of the Lemma, given $n,k\geq 1$, there is a commutative diagram, of Banach spaces and operators
$$
\begin{CD}
0@>>> \G^{(n)}_{z} @>>> \G^{(n+k)}_{z}@>>> \G^{(k)}_{z}@>>> 0\\
&& @VVV @VVV @VVV\\
0@>>> \F^{(n)}_{z} @>>> \F^{(n+k)}_{z}@>>> \F^{(k)}_{z}@>>> 0
\end{CD}
$$
where the descending arrows are the corresponding formal inclusions. This implies that the derived spaces of admissible interpolation families and the corresponding exact sequences do not vary if one uses a norm of Hardy type instead of (\ref{eq:esssup}); see Pisier's comments in \cite{pisier}.

The most important application for us occurs in the context of couples, where one has $\CX_z^{(n)}=\CoX_z^{(n)}$, up to equivalent norms, for every $z$ in the corresponding strip and all $n\geq 1$.
Actually it is easy to see that $\CX_z^{(n)}$ and $\CoX_z^{(n)}$ are the same space, with the same norm, using functions of the form $w\mapsto\exp\big{(}\varepsilon(w-z)^{2+4k}\big{)}$, where $\varepsilon>0$ is small and $k\in\N$ is large. We will use this fact without further mention.

\section{The cornerstone example}\label{sec:corner}
 Let us investigate the particularly interesting case of the couple
$(\ell_\infty, \ell_1)$, which in a sense motivated the whole theory. We will denote by $\mathscr Z$ the Calder\'on space $\C(\ell_\infty, \ell_1)$ on the unit strip, that is, $\mathscr Z$ is the
space of analytic functions $f:\mathbb S\to\ell_\infty$ having the
following properties:
\begin{enumerate}
\item $f$ extends to a continuous function on $\overline{\mathbb S}\To \ell_{\infty}$ that we denote again by $f$.
\item $\|f\|_\mathscr Z=\sup\{\|f(it)\|_\infty,\|f(1+it)\|_1:t\in\mathbb R\}<\infty$.
\end{enumerate}
Of course $\mathscr Z$ is admissible and classical arguments show that $\mathscr Z_z$ is the complex interpolation space $[\ell_\infty,\ell_1]_{\theta}=\ell_p$ choosing $\theta=\Re z$ and $p=1/\theta$ for $\theta\in(0,1)$; in particular, $\mathscr Z_z=\ell_2$ for $z=1/2$. In the remainder of this Section we fix $z=1/2$ as the base point and we denote $\mathscr Z^{(n)}_{1/2}$ by $\mathscr Z_n$ for $n=1,2,\dots$ If $x$ is normalized in $\ell_2$ and we set $x=u|x|$ then $f_x(z)=u|x|^{2z}$ is normalized in $\mathscr Z$ and one has $f_x({1\over 2})=x$. Thus
$$
f_x(z)=u|x||x|^{2z-1}=x|x|^{2(z-1/2)}=x\sum_{n=0}^\infty\frac{2^n\log^n|x|}{n!}\left(z-\tfrac{1}{2}\right)^n,
$$
from where $(\tau_nf_x)( \tfrac{1}{2})= \tfrac{2^n}{n!} \,x\, \log^n|x|.$ Thus, for arbitrary $x\in\ell_2$ we have, by homogeneity,
\begin{equation*}
(\tau_nf_x)( \tfrac{1}{2}) =\frac{2^n x}{n!}\log^n\left(\frac{|x|}{\|x\|_2}\right).
\end{equation*}
Hence,
\begin{equation}\label{eq:omega1n}
\Omega^{1,n}(x)= \tau_{[n,1]}(f_x)( \tfrac{1}{2}) = x\left(\frac{2^{n}}{n!}\log^{n}\left(\frac{|x|}{\|x\|_2}\right),\dots,  \frac{2^2}{2!}\log^2\left(\frac{|x|}{\|x\|_2}\right), 2\log\left(\frac{|x|}{\|x\|_2}\right) \right),
\end{equation}
which leads to a quite manageable description of the spaces $\mathscr Z_m$. In particular, since $\mathscr Z_1=\ell_2$ we can use the map $\Omega^{1,1}$ to obtain that the functional
$
\|(y,x)\|_{\Omega^{1,1}}=\big\|y-2x\log\big(|x|/\|x\|_2\big)\big\|_2+\|x\|_2
$
is equivalent to the norm of  $\mathscr Z_2$. This shows that $\mathscr Z_2$ is isomorphic, but not equal, to the original Kalton-Peck space $Z_2$, whose quasinorm was defined by its legitimate owners as
$\|(y,x)\|_{Z_2}=\big\|y-x\log\big(\|x\|_2/|x|\big)\big\|_2+\|x\|_2.$
An isomorphism between both versions is $(y,x)\in Z_2\longmapsto (y, - 2x)\in\mathscr Z_2$.\medskip

The paper \cite{cck} contains a proof that $ \mathscr Z_m$ is not a subspace of a twisted Hilbert space for $m\geq 3$. We show now a general result which requires the following inductive, \emph{ad hoc} definition:

\begin{definition}
A twisted Hilbert space of order $1$ is just a Banach space which is isomorphic to a Hilbert space. For $k\geq 2$, say that $Z$ is a twisted Hilbert space of order $k$ if for some (equivalently, for every) choice $i+j=k$ with $i,j\geq 1 $ there is a short exact sequence $0\To A_i\To Z\To A_j\To 0$ in which $A_i$ (resp. $A_j$) is a twisted Hilbert space of order $i$ (resp. $j$).
\end{definition}

The equivalence between the ``for some" and the ``for every" form of the definition above is not entirely straightforward and requires a judicious use of diagrams: if $n+m=i$ the commutative diagram
$$
\begin{CD}
&&0&&0\\ &&@VVV @VVV\\
&&A_n@= A_n\\
&&@VVV @VVV \\
 0@>>> A_i@>>>  Z@>>>  A_j@>>>0\\
&&@VVV@VVV@|\\
 0@>>>A_m@>>>  Z/A_n@>>>  A_j@>>> 0\\
&&@VVV@VVV\\
&&0&&0
\end{CD}
$$
shows that a twisted Hilbert space $Z$ of order $i+j$ also decomposes as a twisted sum of twisted Hilbert spaces of order $n$ and $m+j$; and, analogously, if $n+m=j$ using the commutative diagram
$$
\begin{CD}
&&&&0&&0\\ &&&&@AAA @AAA\\
&&&& Z/B@= A_m@>>>0\\
&&&& @AAA @AAA\\
 0@>>>A_i@>>>  Z@>>>  A_j@>>> 0\\
&&@| @AAA@AAA\\
 0@>>>A_i@>>>  B@>>>  A_n@>>> 0\\
&&&&@AAA @AAA \\
&&&&0&&0
\end{CD}
$$
The space $\mathscr Z_m$ is a twisted Hilbert space of order $m$ and twisted sums of Hilbert spaces are exactly the twisted Hilbert spaces of order $2$.

\begin{theorem}\label{notsub} $\mathscr Z_m$ cannot be embedded into a twisted Hilbert space of order $k<m$. In particular $\mathscr Z_m$ is not a subspace of $ \mathscr Z_k$ whenever $k<m$.\end{theorem}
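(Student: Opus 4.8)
The plan is to distinguish $\mathscr Z_m$ from twisted Hilbert spaces of lower order by a growth estimate for \emph{finite‑dimensional type $2$ constants}. For a Banach space $X$ and $N\in\N$ let $\mathbf t_N(X)$ denote the least constant $C$ with $\Ave_\pm\|\sum_{i=1}^N\varepsilon_i x_i\|\le C\big(\sum_{i=1}^N\|x_i\|^2\big)^{1/2}$ for all $x_1,\dots,x_N\in X$. This quantity does not increase on subspaces and satisfies $\mathbf t_N(X)\le\lambda\,\mathbf t_N(Y)$ whenever $X$ embeds $\lambda$‑isomorphically into $Y$. The theorem follows from two estimates: an \emph{upper bound} $\mathbf t_N(W)\le C_k(1+\log N)^{k-1}$ for every twisted Hilbert space $W$ of order $k$, and a \emph{lower bound} $\mathbf t_N(\mathscr Z_m)\ge c_m(1+\log N)^{m-1}$ for all large $N$. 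Indeed, an embedding of $\mathscr Z_m$ into such a $W$ with $k<m$ would force $c_m(1+\log N)^{m-1}\le\lambda C_k(1+\log N)^{k-1}$ for all $N$, i.e.\ $c_m(1+\log N)^{m-k}\le\lambda C_k$, which is absurd because $m-k\ge1$. The ``in particular'' clause is then immediate, since $\mathscr Z_k$ is a twisted Hilbert space of order $k$.

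For the lower bound I would compute inside $\mathscr Z=\C(\ell_\infty,\ell_1)$ on $\mathbb S$, testing with $\zeta_i=(0,\dots,0,e_i)\in\mathscr Z_m$ for $i=1,\dots,N$. The constant function $e_i$ shows $\|\zeta_i\|_{\mathscr Z_m}\le1$, so $\big(\sum\|\zeta_i\|^2\big)^{1/2}\le\sqrt N$. Fix signs $\varepsilon_1,\dots,\varepsilon_N$, put $x=\sum_{i\le N}\varepsilon_i e_i$, and let $f\in\mathscr Z$ be \emph{any} function with $\tau_{(m,0]}f(1/2)=(0,\dots,0,x)$, i.e.\ $f(1/2)=x$ and $\tau_jf(1/2)=0$ for $1\le j\le m-1$. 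With $\varepsilon=(\varepsilon_1,\dots,\varepsilon_N,0,\dots)$, the scalar function $h(z)=N^{z-1}\langle\varepsilon,f(z)\rangle$ obeys $|h|\le\|f\|_{\mathscr Z}$ on $\partial\mathbb S$ — because $|\langle\varepsilon,y\rangle|\le\|y\|_1$ and $|\langle\varepsilon,y\rangle|\le N\|y\|_\infty$ — hence on all of $\mathbb S$ by the maximum principle; meanwhile $\langle\varepsilon,f(1/2+w)\rangle=N+O(w^m)$ gives $h(1/2+w)=N^{1/2}e^{w\log N}(1+O(w^m))$, so $\tau_\ell h(1/2)=N^{1/2}(\log N)^\ell/\ell!$ for $0\le\ell\le m-1$. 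Composing $h$ with a conformal map $\phi:\D\to\mathbb S$ with $\phi(0)=1/2$, the coefficient of $\zeta^{m-1}$ of $h\circ\phi$ equals $\phi'(0)^{m-1}N^{1/2}(\log N)^{m-1}/(m-1)!$ plus chain‑rule corrections of lower order in $\log N$, so the Cauchy estimate $|\,\text{coeff}\,|\le\sup_\D|h\circ\phi|\le\|f\|_{\mathscr Z}$ yields $\|f\|_{\mathscr Z}\gtrsim_m\sqrt N(\log N)^{m-1}$ once $N$ is large. Taking the infimum over $f$ gives $\big\|\sum_i\varepsilon_i\zeta_i\big\|_{\mathscr Z_m}\gtrsim_m\sqrt N(\log N)^{m-1}$ for every sign choice, whence $\mathbf t_N(\mathscr Z_m)\gtrsim_m(\log N)^{m-1}$. (The same computation via $\Omega^{1,m-1}$ recovers the known size of $\mathscr Z_2$; by Lemma~\ref{lem:Gzn=Fzn} it is irrelevant whether one works with $\C$ or $\C_0$.)

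For the upper bound I would argue by induction on $k$, the case $k=1$ being the boundedness of $\mathbf t_N$ on a space isomorphic to a Hilbert space. For $k\ge2$, using the ``for some'' form of the definition, realise $W$ up to isomorphism as $A\oplus_\Omega B$ with $A$ a Hilbert space, $B$ a twisted Hilbert space of order $k-1$, and $\Omega:B\to A$ quasilinear. Writing $w_i=(y_i,x_i)$ with $\|x_i\|_B$ and $\|y_i-\Omega(x_i)\|_A$ both $\le\|w_i\|_W$, the equivalent quasinorm of $\sum\varepsilon_iw_i$ is dominated by $\|\sum\varepsilon_i(y_i-\Omega(x_i))\|_A+\|\Omega(\sum\varepsilon_ix_i)-\sum\varepsilon_i\Omega(x_i)\|_A+\|\sum\varepsilon_ix_i\|_B$; averaging, the first and third terms contribute $\mathbf t_N(A)\big(\sum\|w_i\|^2\big)^{1/2}$ and $\mathbf t_N(B)\big(\sum\|w_i\|^2\big)^{1/2}$. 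Granting the defect estimate of the next paragraph, the middle term contributes $\lesssim(1+\log N)\mathbf t_N(B)\big(\sum\|w_i\|^2\big)^{1/2}$, so $\mathbf t_N(W)\lesssim(1+\log N)\mathbf t_N(B)$, and the inductive hypothesis $\mathbf t_N(B)\le C_{k-1}(1+\log N)^{k-2}$ closes the argument.

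The step I expect to be the main obstacle is the \emph{defect estimate}: for $\Omega:B\to A$ quasilinear with $A$ of type $2$,
\[
\Ave_\pm\Big\|\Omega\Big(\sum_{i=1}^N\varepsilon_ix_i\Big)-\sum_{i=1}^N\varepsilon_i\Omega(x_i)\Big\|_A\ \le\ C_\Omega(1+\log N)\,\mathbf t_N(B)\Big(\sum_{i=1}^N\|x_i\|_B^2\Big)^{1/2},
\]
i.e.\ one layer of twisting costs only a single logarithm. I would prove it by iterating the quasilinearity inequality along a balanced binary tree of depth $\lceil\log_2N\rceil$, which writes the defect as a sum, over the internal nodes $\nu$, of elementary defects $d_\nu=\Omega(a_\nu+b_\nu)-\Omega(a_\nu)-\Omega(b_\nu)$ with $a_\nu,b_\nu$ partial sums of the $\varepsilon_ix_i$. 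Nodes at a fixed depth involve disjoint blocks of signs, so the $d_\nu$ there are independent, and each $d_\nu$ is odd in $\varepsilon$, hence symmetric with mean zero; since $A$ has type $2$, $\Ave\big\|\sum_{\nu}d_\nu\big\|_A\lesssim_A\big(\sum_\nu\Ave\|d_\nu\|_A^2\big)^{1/2}$ at each depth, while $\Ave\|d_\nu\|_A^2\lesssim C_\Omega^2\mathbf t_N(B)^2\sum_{i\in\nu}\|x_i\|_B^2$ by quasilinearity together with type $2$ of $B$ applied to $a_\nu$ and $b_\nu$. As the nodes at each depth partition the leaves, summing over the $\lesssim\log N$ depths gives the claimed bound; the binary‑tree cancellation and the mean‑zero–plus–type‑$2$ step are exactly what remove the spurious $\sqrt N$ that a direct triangle inequality would leave behind.
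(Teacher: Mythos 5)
Your proposal is correct, and it follows the paper's global strategy (compare growth rates of the $n$-th type-$2$ constants $a_n$, which are your $\mathbf t_N$ up to Kahane's inequality) while replacing both key lemmas by genuinely different arguments. For the upper bound, the paper simply cites the Enflo--Lindenstrauss--Pisier inequality $a_{nk}(Z)\le a_n(Y)a_k(Z)+a_n(Y)a_k(Z/Y)+a_n(Z)a_k(Z/Y)$ and iterates it with $n=2$ and $Y$ Hilbert; you instead reprove the needed ``one logarithm per layer'' estimate from scratch, by writing $W\simeq A\oplus_\Omega B$ and bounding the defect of $\Omega$ along a balanced binary tree, exploiting that the depth-$j$ defects are independent and symmetric and that the Hilbert kernel $A$ has type $2$. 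This is sound (the only cosmetic slip is the phrase ``type $2$ of $B$'': what you actually use for $a_\nu,b_\nu$ is the \emph{finite} constant $\mathbf t_N(B)$, which exists for any $B$, not type $2$ of $B$, which the Kalton--Peck space lacks); it is longer than a citation but makes the mechanism of the submultiplicativity transparent. For the lower bound, the paper's Lemma~\ref{lem:an} peels off one coordinate at a time through the quotient maps $\mathscr Z_{m+1}\To\mathscr Z_m\To\cdots$, using the explicit formula (\ref{eq:omega1n}) and the alternating identity (\ref{eq:1/n!}); your pairing $h(z)=N^{z-1}\langle\varepsilon,f(z)\rangle$ followed by a Cauchy estimate on the $(m-1)$-st Taylor coefficient is a self-contained version of the ``less tortuous'' duality argument that the paper itself sketches at the end of Section~\ref{sec:duality} (there phrased through $T_n$ and the extremal in $\C(\ell_1,\ell_\infty)$), and it gives the correct exponent $(\log N)^{m-1}$ in one stroke. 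Two micro-remarks: you can skip the composition with $\phi$ entirely and apply Cauchy's estimate on a disc of radius $<\tfrac12$ centred at $\tfrac12$, avoiding the Fa\`a di Bruno corrections; and the passage from a short exact sequence $0\To A\To W\To B\To 0$ to an isomorphic representation $A\oplus_\Omega B$ uses the standard sequence--quasilinear-map correspondence, which is available here and implicitly assumed throughout the paper.
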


Let us recall from \cite{elp} the definition of $n$-th type 2 constant: If $X$ is a Banach space, $a_n(X)$ is the infimum of the constants $a$ such that
$$\underset{\pm}\Ave  \left \|\sum_{i=1}^n \pm x_i\right \|^2 \leq a^2 \sum_{i=1}^n \|x_i\|^2
$$
for all $x_1, \cdots, x_n \in X$. Theorem \ref{notsub} will follow straightforwardly from the next two Lemmata.
The first one generalizes estimates in \cite[Theorem~3]{elp}, \cite[Theorem~6.2]{kaltpeck}, \cite[Theorem 7.5]{kaltconv} that deal with the case $m=1$.

\begin{lemma} For each twisted Hilbert space $Z$ of order $m+1$ there is a constant $C$, depending on $Z$ and $m$, such that $a_n(Z) \leq C \log_2^{m} n$.
\end{lemma}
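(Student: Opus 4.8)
The plan is to prove the estimate $a_n(Z)\leq C\log_2^m n$ by induction on $m$, using the inductive structure of twisted Hilbert spaces of order $m+1$ together with a splitting of the index set into dyadic blocks. The base case $m=0$ is exactly the classical estimate $a_n(Z)\leq C\log_2 n$ for a twisted Hilbert space of order $1$... wait, order $1$ means $Z$ is isomorphic to a Hilbert space, so $a_n(Z)$ is bounded; the first genuine case is $m=1$, a twisted sum of Hilbert spaces, where $a_n(Z)\lesssim\log_2 n$ is the result of \cite[Theorem~3]{elp}, \cite[Theorem~6.2]{kaltpeck}, \cite[Theorem~7.5]{kaltconv}. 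So I would take $m=1$ (or even $m=0$, trivially) as the base of the induction.

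For the inductive step, suppose the lemma holds for all orders up to $m$, and let $Z$ be a twisted Hilbert space of order $m+1$. Choosing the decomposition $i=m$, $j=1$ we get a short exact sequence
$$
\begin{CD}
0@>>> A @>\imath>> Z @>\pi>> H @>>> 0
\end{CD}
$$
with $A$ a twisted Hilbert space of order $m$ and $H$ a Hilbert space; let $\Phi:H\to A$ be a quasilinear map inducing it, so $Z\cong A\oplus_\Phi H$ with quasinorm $\|(a,h)\|=\|a-\Phi(h)\|_A+\|h\|_H$. Given $n$ vectors $(a_i,h_i)$ in $Z$, I would estimate $\Ave_\pm\big\|\sum_i\pm(a_i,h_i)\big\|^2$ by controlling the two coordinates separately: the $H$-coordinate contributes $\Ave_\pm\|\sum\pm h_i\|_H^2=\sum\|h_i\|_H^2$ exactly, and the $A$-coordinate is $\big\|\sum\pm a_i-\Phi(\sum\pm h_i)\big\|_A$. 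The key device is that quasilinearity lets one ``split off'' $\Phi$ over a dyadic decomposition: writing $S=\{1,\dots,n\}$ and grouping the $\pm h_i$ by a binary tree of depth $\lceil\log_2 n\rceil$, iterated application of property (b) of Definition~\ref{def:quasilinear} gives
$$
\Big\|\Phi\Big(\sum_{i}\varepsilon_i h_i\Big)-\sum_i\varepsilon_i\Phi(h_i)\Big\|_A\leq C\log_2 n\cdot\Big(\sum_i\|h_i\|_H^2\Big)^{1/2},
$$
an estimate of the same flavour as the ``logarithm per merge'' bound behind the $m=1$ case. Then
$$
\Big\|\sum_i\varepsilon_i a_i-\Phi\Big(\sum_i\varepsilon_i h_i\Big)\Big\|_A\leq\Big\|\sum_i\varepsilon_i\big(a_i-\Phi(h_i)\big)\Big\|_A+C\log_2 n\Big(\sum_i\|h_i\|_H^2\Big)^{1/2},
$$
and applying $\Ave_\pm$ together with the induction hypothesis $a_n(A)\leq C'\log_2^{\,m-1}n$ to the first term (the vectors $a_i-\Phi(h_i)$ lie in $A$ and have $A$-norm at most $\|(a_i,h_i)\|_Z$) yields, after taking square roots and combining,
$$
\Big(\Ave_\pm\Big\|\sum_i\varepsilon_i a_i-\Phi\big(\sum_i\varepsilon_i h_i\big)\Big\|_A^2\Big)^{1/2}\leq C''\log_2^{\,m}n\cdot\Big(\sum_i\|(a_i,h_i)\|_Z^2\Big)^{1/2}.
$$
Adding the $H$-contribution and using equivalence of the quasinorm on $Z$ with its norm gives $a_n(Z)\leq C\log_2^m n$.

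The main obstacle I anticipate is making the dyadic splitting of $\Phi$ over signs genuinely rigorous with the correct power of the logarithm, and in particular checking that at each of the $\log_2 n$ levels of the merge the ``error terms'' that accumulate are controlled by $(\sum\|h_i\|_H^2)^{1/2}$ rather than by the larger $\sum\|h_i\|_H$; this requires organising the tree so that the partial sums being merged are $\ell_2$-orthogonal in $H$ and invoking quasilinearity level by level, exactly as in the Kalton--Peck argument for $m=1$, and then feeding the outcome into the Rademacher average. A secondary point is the routine but necessary verification that passing between the quasinorm $\|\cdot\|_\Phi$ and an equivalent norm on $Z$ (legitimate since $\Phi$ arises as a derivation, as noted after Definition~\ref{def:quasilinear}) only changes the constants, and that the induction hypothesis applies to $A$ with the ambient $A$-norm of $a_i-\Phi(h_i)$, which is bounded by $\|(a_i,h_i)\|_Z$ by definition of the twisted-sum quasinorm.
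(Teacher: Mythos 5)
Your overall frame (write $Z=A\oplus_\Phi H$ with $A$ of order $m$ and $H$ Hilbert, and estimate the two coordinates separately) is genuinely different from the paper's proof, which never touches the quasilinear map and instead iterates the inequality $a_{nk}(Z)\leq a_n(Y)a_k(Z)+a_n(Y)a_k(Z/Y)+a_n(Z)a_k(Z/Y)$ from \cite{elp} with the Hilbert space as the subspace, summing over dyadic scales via Faulhaber's formula. But your route has a genuine gap: the key ``dyadic splitting'' inequality
$\big\|\Phi\big(\sum_i\varepsilon_ih_i\big)-\sum_i\varepsilon_i\Phi(h_i)\big\|_A\leq C\log_2 n\big(\sum_i\|h_i\|_H^2\big)^{1/2}$
is not a consequence of quasilinearity and is in fact false when $A$ has order $m\geq 2$. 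Applying property (b) of Definition~\ref{def:quasilinear} level by level in a binary tree only yields $C\log_2 n\sum_i\|h_i\|_H$ (an $\ell_1$ sum); even with orthogonal blocks, Cauchy--Schwarz across each level produces a factor $\sqrt n$ against the $\ell_2$ sum, not $\log n$; and for $m=1$ the $\log n$-versus-$\ell_2$ statement is essentially equivalent to the theorem of \cite{elp,kaltpeck} itself, not a formal merge argument. For $m\geq2$ there is a concrete counterexample inside this very paper: take $Z=\mathscr Z_{m+1}$, $A=\mathscr Z_m$, $H=\ell_2$, $\Phi=\Omega^{1,m}$ as in (\ref{eq:omega1n}), and $h_i=e_i$ for $i\leq N$. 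Then $\Phi(e_i)=0$ for every $i$, while the computation in the proof of Lemma~\ref{lem:an} gives $\|\Phi(s_N)\|_{\mathscr Z_m}\geq c_m\sqrt N\log^m N$, which exceeds your claimed bound $C\log_2 N\cdot\sqrt N$ by a factor of order $\log^{m-1}N$.

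This is not a repairable technicality in your scheme: if the splitting estimate held, your own bookkeeping would give $a_n(Z)\leq C(\log_2^{m-1}n+\log_2 n+1)$, i.e.\ $a_n(\mathscr Z_{m+1})=O(\log^{m-1}n)$ for $m\geq2$, contradicting the lower bound $a_n(\mathscr Z_{m+1})\geq c_m\log_2^m n$ of Lemma~\ref{lem:an}. The point is that a quasilinear map into an order-$m$ twisted Hilbert space carries a splitting cost of order $\log^m n$, so the inequality you need is (up to constants) the statement being proved, and the induction becomes circular. The paper's use of the three-space inequality of \cite{elp} is exactly what replaces this step: the Hilbert subspace contributes $a_2(H)=1$, the order-$m$ quotient contributes $\log^{m-1}$ through the induction hypothesis, and the final power of $\log$ appears only after summing $\sum_{1\leq i\leq n}i^{m-1}\sim n^{m}$ along the dyadic iteration.
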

\begin{proof}
From \cite[Theorem 1, Part (1)]{elp} we know that, given a subspace $Y \subset Z$,  one has
$$a_{nk}(Z) \leq a_n(Y) a_k(Z) + a_n(Y) a_k(Z/Y) + a_n(Z) a_k(Z/Y)$$
We then proceed by induction. The result is trivial for $m=0$, by the parallelogram law. Assume it is true for twisted Hilbert spaces of order $m$.
Now let $Z$ be a twisted Hilbert space or order $m+1$ and let
$0\To H\To Z\To X\To 0$ be a witnessing sequence. There is no loss of generality if we assume that $Z$ contains $H$ isometrically and the corresponding quotient is $X$. Since $X$ is a twisted Hilbert space of order $m$ the induction hypothesis provides a constant $C$ such that $a_{n}( X) \leq C  \log_2^{m-1} n$ for all $n\in\N$.
Thus, for $k \in \mathbb{N}$, one has
$$
a_{2k}( Z) \leq a_2(H) a_k( Z) + a_2(H) a_k(X) + a_2( Z) a_k( X)=a_k( Z) + (1 + a_2( Z))a_k( X)\leq  a_k( Z) +  (1 + a_2( Z))C\log_2^{m-1} k
$$
and so
$$
a_{2^{n+1}}( Z) \leq a_{2^n}( Z) + (1 + a_2( Z))C \log_2^{m-1} 2^n = a_{2^n}( Z) + (1 + a_2( Z))C n^{m-1}.
$$
Also,
$$
a_{2^n}( Z)\leq a_{2^{n-1}}( Z)  + (1 + a_2( Z))C (n-1)^{m-1},
$$
and, iterating $n$ times, we obtain
$$
a_{2^{n+1}}( Z) \leq  a_2( Z) +  (1 + a_2(Z))C  \sum_{1\leq i\leq n} i^{m-1}.
$$
From Faulhaber's formula \cite[p. 108]{aig} we get that the dominating term of $\sum_{1\leq i\leq n} i^{m-1}$ is $n^{m}$. Using that $a_n$ is nondecreasing, there is some constant $C'$ such that $ a_{n}(Z ) \leq C' \log_2^{m} n$ for all $n$.\end{proof}

It is clear that if $Y$ is isomorphic to a subspace of $X$, then the sequence  $a_n(Y)/a_n(X)$ is bounded. The following computation completes the proof of Theorem~\ref{notsub}.

\begin{lemma}\label{lem:an}
For each $m\geq 0$ there is $c_m>0$ so that $c_m\log_2^{m} n \leq a_n( \mathscr Z_{m+1})$.
\end{lemma}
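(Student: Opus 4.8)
The plan is to test the type-$2$ inequality on the $N$ ``flat'' vectors coming from the unit basis and to quantify how badly the higher twisting distorts their Rademacher average. When $m=0$ there is nothing to do, since $\mathscr Z_1=\ell_2$ forces $a_N(\mathscr Z_1)=1=\log_2^0N$, so $c_0=1$ works. Fix $m\geq 1$ and put $x=e_1+\dots+e_N$, so that $\|x\|_2=\sqrt N$ and, on the support of $x$, $|x|=1$ and $\log(|x|/\|x\|_2)=-\tfrac12\log N$. Let $v_i\in\mathscr Z_{m+1}$ be the Rochberg array at $z=\tfrac12$ of the constant function $w\mapsto e_i\in\mathscr Z=\mathscr C(\ell_\infty,\ell_1)$, that is, $v_i=(0,\dots,0,e_i)$; since $\|w\mapsto e_i\|_{\mathscr Z}=1$ one gets $\|v_i\|_{\mathscr Z_{m+1}}\leq 1$, while $\|v_i\|_{\mathscr Z_{m+1}}\geq\|\pi_{m+1,1}v_i\|_{\mathscr Z_1}=\|e_i\|_{\ell_2}=1$, so $\|v_i\|_{\mathscr Z_{m+1}}=1$. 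For arbitrary signs, $\sum_i\epsilon_iv_i=(0,\dots,0,x_\epsilon)$ with $x_\epsilon=\sum_i\epsilon_ie_i$; and since the diagonal sign change is a linear isometry of $\ell_\infty$ and of $\ell_1$, it induces isometries of $\mathscr Z$ and, acting coordinatewise on arrays, of every $\mathscr Z_k$, carrying $(0,\dots,0,x)$ to $(0,\dots,0,x_\epsilon)$. Hence $\|\sum_i\epsilon_iv_i\|_{\mathscr Z_{m+1}}=\|(0,\dots,0,x)\|_{\mathscr Z_{m+1}}$ for every sign pattern, so $a_N(\mathscr Z_{m+1})^2\geq N^{-1}\|(0,\dots,0,x)\|_{\mathscr Z_{m+1}}^2$, and everything reduces to showing $\|(0,\dots,0,x)\|_{\mathscr Z_{m+1}}\geq c_m'\,(\log N)^m\sqrt N$ for some $c_m'>0$ depending only on $m$.

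To prove that estimate I would peel off coordinates one at a time by means of the twisted sum description. For $2\leq\ell\leq m+1$ one has $\mathscr Z_\ell=\mathscr Z_{\ell-1}\oplus_{\Omega^{1,\ell-1}}\mathscr Z_1$, and by the remark following (\ref{eq:eqnorm}) the quasinorm (\ref{eq:eqnorm}) is equivalent to $\|\cdot\|_{\mathscr Z_\ell}$; let $C_\ell$ be the equivalence constant, which is attached to the two fixed norms on the fixed space $\mathscr Z_\ell$ and hence does not depend on $N$. Unwinding the definitions, for $\zeta=(w_{\ell-1},\dots,w_0)\in\mathscr Z_\ell$ the array $(w_{\ell-1},\dots,w_1)-\Omega^{1,\ell-1}(w_0)$ lies in $\mathscr Z_{\ell-1}$ and
$$
\|\zeta\|_{\mathscr Z_\ell}\ \geq\ C_\ell^{-1}\,\big\|(w_{\ell-1},\dots,w_1)-\Omega^{1,\ell-1}(w_0)\big\|_{\mathscr Z_{\ell-1}}.
$$
Starting from $\xi^{(0)}=(0,\dots,0,x)\in\mathscr Z_{m+1}$ I define $\xi^{(k+1)}\in\mathscr Z_{m-k}$ from $\xi^{(k)}\in\mathscr Z_{m+1-k}$ by this operation; iterating the displayed inequality $m$ times gives
$$
\big\|(0,\dots,0,x)\big\|_{\mathscr Z_{m+1}}\ \geq\ \big(C_{m+1}C_m\cdots C_2\big)^{-1}\,\big\|\xi^{(m)}\big\|_{\mathscr Z_1},
$$
so it only remains to identify $\xi^{(m)}\in\mathscr Z_1=\ell_2$.

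Here the explicit formula (\ref{eq:omega1n}) for $\Omega^{1,n}$ on this couple takes over. Because $|x|$ is $\{0,1\}$-valued and $\Omega^{1,n}$ is homogeneous, an easy induction shows that every coordinate of every $\xi^{(k)}$ is a real scalar multiple of $x$: if the last coordinate of $\xi^{(k)}$ is $\lambda x$, then $\Omega^{1,m-k}(\lambda x)=\lambda\,\Omega^{1,m-k}(x)$ and, by (\ref{eq:omega1n}), all coordinates of $\Omega^{1,m-k}(x)$ are explicit powers of $\log N$ times $x$. Solving the ensuing scalar recursion for the coefficients one finds that the last coordinate of $\xi^{(k)}$ equals $\pm\frac{(\log N)^k}{k!}\,x$; in particular $\xi^{(m)}=\pm\frac{(\log N)^m}{m!}\,x$, so $\|\xi^{(m)}\|_{\mathscr Z_1}=\frac{(\log N)^m}{m!}\sqrt N$. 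Substituting back, $\|(0,\dots,0,x)\|_{\mathscr Z_{m+1}}\geq\frac{(\log N)^m\sqrt N}{m!\,C_{m+1}\cdots C_2}$, whence
$$
a_N(\mathscr Z_{m+1})\ \geq\ \frac{(\log N)^m}{m!\,C_{m+1}\cdots C_2}\ =\ \frac{(\log 2)^m}{m!\,C_{m+1}\cdots C_2}\,\log_2^m N,
$$
and one takes $c_m=(\log 2)^m/(m!\,C_{m+1}\cdots C_2)>0$. The only genuine work is this last computation: one must verify that the successive reductions never annihilate the leading $(\log N)^m$ term. Since the whole recursion lives inside the line spanned by $x$ and the scalars involved are elementary (essentially the iterated divided differences at $\tfrac12$ of $w\mapsto N^{1/2-w}$), I expect this bookkeeping — rather than any conceptual difficulty — to be the main obstacle, together with the routine but essential point that the constants $C_\ell$ be read off from the general norm–quasinorm equivalence and are therefore independent of $N$.
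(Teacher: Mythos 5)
You have essentially reproduced the paper's own proof: the same test vectors $(0,\dots,0,e_i)$, the same sign-invariance reduction to a lower bound for $\|(0,\dots,0,s_N)\|_{\mathscr Z_{m+1}}$, and the same peeling of one coordinate per step through the quasinorm equivalence for $\mathscr Z_\ell=\mathscr Z_{\ell-1}\oplus_{\Omega^{1,\ell-1}}\ell_2$ together with the explicit formula (\ref{eq:omega1n}). The only substantive point, however, is precisely the one you leave as ``bookkeeping'': that the last coordinate of $\xi^{(k)}$ is $\pm\frac{(\log N)^k}{k!}\,x$, i.e.\ that the successive subtractions never annihilate the leading term. Two remarks on that. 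First, it is not a recursion in the last coordinate alone: the last coordinate of $\xi^{(k+1)}$ involves the next-to-last coordinate of $\xi^{(k)}$, so you must track the whole triangular array of scalar coefficients, which is exactly the displayed computation in the paper. Second, the tool that closes it is the paper's identity (\ref{eq:1/n!}): writing $L=\log(1/N)$, an induction shows that the slot-$j$ coefficient of $\xi^{(k)}$ is, up to a global sign, the partial alternating sum $\sum_{i=0}^{k-1}\frac{(-1)^i}{i!\,(j-i)!}$ times $L^j$, and (\ref{eq:1/n!}) evaluated at the bottom slot $j=k$ gives the value $\pm 1/k!$, which both propagates the induction (the quantity subtracted at the next step is this coefficient over $(j-k)!$) and yields your claimed coefficient. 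So your asserted value is correct and the argument is completable exactly as you set it up — the constants $C_\ell$ are indeed independent of $N$ (the paper even notes one may take $1/3$), and the conversion $\log N=(\log 2)\log_2 N$ is fine — but as written the crux of the lemma, the cancellation computation on which the paper spends its entire proof, is asserted rather than proved.
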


\begin{proof} Pick $s_N=\sum_{1\leq i\leq N}{ e_i}$. Since
$\|(0, \dots, 0, \sum_{i\leq N}{ e_i})\|= \|(0, \dots, 0, \sum_{i\leq N}{\pm e_i})\|$ and $\|\sum_{i\leq N}{ e_i}\|=\sqrt{N}$,
the inequality
$$\|(0, \dots, 0, s_N)\|_{\mathscr Z_{m+1}} \geq c_m \sqrt{N} \log_2^m N,$$
immediately yields the lower estimate for $a_n( \mathscr Z_{m+1})$.\medskip

We need the following elementary identity: for each $n\geq 1$ one has
\begin{equation}\label{eq:1/n!}
\frac{1}{n!}+ \frac{(-1)}{(n-1)!}+ \frac{(-1)^2}{2!(n-1)!}
+\dots+\frac{(-1)^{n-1}}{(n-1)!}+\frac{(-1)^n}{n!}=
\sum_{0\leq i\leq n }  \frac{(-1)^i}{i!} \frac{1}{(n-i)!}=0.
\end{equation}
This can be seen writing $1$ as the product $e^{-t}e^t$ and then using Leibniz rule to compute the $n$-th Taylor coefficient of the product at the origin.\medskip

For the rest of the proof we will use the following notations: given  $x\in \Sigma$ and scalars
$(\alpha_1,\dots,\alpha_k)$ we write $(\alpha_1,\dots,\alpha_k)x =(\alpha_1 x,\dots,\alpha_k x)$. Also, we set $L=\log(1/N)$. We also take advantage of the fact that each $\mathscr Z_{n+1}$ can be written as a twisted sum of  $\mathscr Z_n$ and $\ell_2$, using the map defined by (\ref{eq:omega1n}): taking $x=s_N$ there, we have
$$
\Omega^{1,n}(s_N)=
\left(\frac{2^{n}}{n!}\log^{n}\left(N^{-1/2}\right),\dots,  \frac{2^2}{2!}\log^2\left(N^{-1/2}\right), 2\log\left(N^{-1/2}\right) \right) s_N=
\left(\frac{L^{n}}{n!}, \frac{L^{n-1}}{(n-1)!},\dots,  \frac{L^2}{2!}, L \right) s_N.
$$
For each $n$ we fix a constant $k_n$ such that
$
\|(y,x)\|_{\mathscr Z_{n+1}}\geq k_n\left(\|(y-\Omega^{1,n}(x)\|_{\mathscr Z_{n}}+\|x\|_2\right)
$. 
Actually one can take $k_n={1\over 3}$ for all $n$. After this preparation:
\begin{align*}\|(0, \dots, 0, s_N)\|_{\mathscr Z_{m+1}}&\geq k_m\left\|\Omega^{1,m} (s_N)\right\|_{\mathscr Z_{m}}=
 k_m\left\| \left(\frac{L^{m}}{m!}, \frac{L^{m-1}}{(m-1)!},\dots,  \frac{L^2}{2!}, L \right) s_N \right\|_{\mathscr Z_{m}}\\
&\geq
k_m k_{m-1}\underbrace{\left\| \left(\frac{L^{m}}{m!}, \frac{L^{m-1}}{(m-1)!},\dots,  \frac{L^2}{2!}\right) s_N - \Omega^{1,m-1} (L s_N) \right\|_{\mathscr Z_{m-1}}}_{(\star)}.
\end{align*}
Now,
\begin{align*}
(\star)&=\Bigg\| \bigg(L^{m}\left[\tfrac{1}{m!}-  \tfrac{1}{(m-1)!}\right],
L^{m-1}\left[\tfrac{1}{(m-1)!}-  \tfrac{1}{(m-2)!}\right],\dots,
L^{3}\left[\tfrac{1}{3!}-  \tfrac{1}{2!}\right], L^{2}\overbrace{\left[\tfrac{1}{2!}-  \tfrac{1}{1!}\right]}^{-1/2!}
\bigg) s_N  \Bigg\|_{\mathscr Z_{m-1}}\\
&\geq k_{m-2}
\Bigg\| \bigg(L^{m}\left[\tfrac{1}{m!}-  \tfrac{1}{(m-1)!}+ \tfrac{1}{2!(m-2)!}\right],
L^{m-1}\left[\tfrac{1}{(m-1)!}-  \tfrac{1}{(m-2)!}+ \tfrac{1}{2!(m-3)!}\right],\dots,
L^{3}\underbrace{\left[\tfrac{1}{3!}-  \tfrac{1}{2!}+ \tfrac{1}{2!1!}\right]}_{1/3!}
\bigg) s_N  \Bigg\|_{\mathscr Z_{m-2}}.
\end{align*}
Continuing in this way, after $\ell$ iterations, we see that
$\|(0, \dots, 0, s_N)\|_{\mathscr Z_{m+1}}/(k_m\cdots k_{m-\ell})$ is at least
\begin{align*}
\Bigg\|\bigg{(}L^m &\left[\tfrac{1}{m!}-  \tfrac{1}{(m-1)!}+ \tfrac{1}{2!(m-2)!}+\dots+
\tfrac{(-1)^{\ell}}{\ell!(m-\ell)!}\right],
L^{m-1} \left[\tfrac{1}{(m-1)!}-  \tfrac{1}{(m-2)!}+ \tfrac{1}{2!(m-3)!}+\dots+
\tfrac{(-1)^{\ell}}{\ell!(m-1-\ell)!}\right],\\
&\quad\dots,
L^{\ell+2}\left[\tfrac{1}{(\ell+2)!}-  \tfrac{1}{(\ell+1)!}+ \tfrac{1}{2!\ell!}+\dots+
\tfrac{(-1)^{\ell}}{\ell!2!}\right],
L^{\ell+1}\underbrace{\left[\tfrac{1}{(\ell+1)!}-  \tfrac{1}{\ell!}+ \tfrac{1}{2!(\ell-1)!}+\dots+
\tfrac{(-1)^{\ell}}{\ell!1!}\right]}_{(-1)^{\ell}/(\ell+1)!}\bigg)s_N\Bigg\|_{\mathscr Z_{m-\ell}}.
\end{align*}
And letting $\ell=m-1$ we conclude that
$$
\frac{\|(0, \dots, 0, s_N)\|_{\mathscr Z_{m+1}}}{(k_m\cdots k_{1})}\geq
\Bigg\|\bigg{(}L^m \underbrace{\left[\tfrac{1}{m!}-  \tfrac{1}{(m-1)!}+ \tfrac{1}{2!(m-2)!}+\dots+
\tfrac{(-1)^{m-1}}{(m-1)!1!}\right]}_{(-1)^{m-1}/m!}s_N\bigg{)}\Bigg\|_{\mathscr Z_1}=\frac{|L|^m}{m!}N^{1/2}. \qedhere
$$
\end{proof}
A more general, less tortuous argument will be given in the final paragraph of Section~\ref{sec:duality}. Keep in mind in what follows that
$\mathscr Z_{n}\simeq \mathscr Z_{n}\oplus \mathscr Z_{n}$.
Indeed, if $\mathbb N=A\cup B$ is a partition of $\mathbb N$ into two infinite subsets, and we set $\mathscr Z_{n}(A)=\{(x_n,\dots,x_1)\in \mathscr Z_{n}: \operatorname{supp} x_i\subset A \text{ for all } 1\leq i\leq n\}$ and similarly for $\mathscr Z_{n}(B)$, then $\mathscr Z_{n}(A)$ and $\mathscr Z_{n}(B)$ are isometric to $\mathscr Z_{n}$ for all $n\geq 1$ exactly for the same reason as for $n=1$ and
 $\mathscr Z_{n}=\mathscr Z_{n}(A)\oplus \mathscr Z_{n}(B)$.

\begin{corollary} $ \mathscr Z_{n+k}$ is not isomorphic to  $\mathscr Z_n  \oplus  \mathscr Z_k$.
\end{corollary}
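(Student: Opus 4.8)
The plan is to derive a contradiction from the asymptotic behaviour of the constants $a_n$, exactly as in the proof of Theorem~\ref{notsub}. Suppose, for contradiction, that $\mathscr Z_{n+k}\simeq \mathscr Z_n\oplus\mathscr Z_k$, and assume without loss of generality $n\leq k$. On the one hand, $\mathscr Z_n\oplus\mathscr Z_k$ is a twisted Hilbert space of order $k+1$: indeed, splicing the sequence $0\to\mathscr Z_n\to\mathscr Z_{n}\oplus\mathscr Z_k\to\mathscr Z_k\to 0$ (a trivial twisted sum) one sees that it is a twisted sum of a twisted Hilbert space of order $n$ and one of order $k$, hence of order $n+k$; but a twisted Hilbert space of order $n+k$ is a fortiori one of order $k+1$ whenever $n+k\geq k+1$, i.e.\ $n\geq 1$, which always holds. (Concretely: decompose $\mathscr Z_k$ as a twisted sum of $\mathscr Z_{k-1}$ and $\ell_2$ via $\Omega^{1,k-1}$, absorb $\mathscr Z_{k-1}$ together with $\mathscr Z_n$ into a single twisted Hilbert space of order $n+k-1$, and repeat if needed to reach order $k+1$.) Therefore, by the first Lemma applied with $m=k$, there is a constant $C$ with $a_N(\mathscr Z_n\oplus\mathscr Z_k)\leq C\log_2^{k} N$.

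On the other hand, $\mathscr Z_{n+k}$ contains a copy of $\mathscr Z_{n+k}$ — more to the point, the second Lemma (Lemma~\ref{lem:an}) with $m=n+k-1$ gives $c_{n+k-1}\log_2^{\,n+k-1} N\leq a_N(\mathscr Z_{n+k})$. Since $n\geq 1$ we have $n+k-1\geq k$, and when $n\geq 2$ this is a strict inequality of exponents, so $a_N(\mathscr Z_{n+k})/a_N(\mathscr Z_n\oplus\mathscr Z_k)\to\infty$, contradicting the isomorphism (which forces the ratio of the $a_N$'s to be bounded, as remarked before Lemma~\ref{lem:an}). This settles the case $n\geq 2$, and by symmetry also $k\geq 2$; the only remaining case is $n=k=1$, i.e.\ whether $\mathscr Z_2\simeq\ell_2\oplus\ell_2=\ell_2$, which is false because $\mathscr Z_2$ is the Kalton--Peck space, a nontrivial twisted Hilbert space (equivalently, $a_N(\mathscr Z_2)\sim\log_2 N$ is unbounded by Lemma~\ref{lem:an} with $m=1$, while $a_N(\ell_2)=1$).

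The only genuine point requiring care is the first paragraph: one must check that $\mathscr Z_n\oplus\mathscr Z_k$ really is a twisted Hilbert space of order $k+1$ in the precise sense of the inductive definition, and here the ``for some $\Leftrightarrow$ for every'' equivalence recorded after that definition does the work, since it lets us re-split $\mathscr Z_n\oplus\mathscr Z_k$ (viewed as a twisted Hilbert space of order $n+k$) as a twisted sum along any admissible partition $i+j=k+1$ with $i,j\geq 1$. This is the main — though modest — obstacle; everything else is a direct invocation of the two Lemmata and the elementary remark that isomorphic Banach spaces have comparable $a_N$ sequences. I would present the argument in the two or three lines the authors clearly intend, citing Theorem~\ref{notsub} and its Lemmata rather than reproving anything.
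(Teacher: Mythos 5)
Your strategy of comparing the growth of the constants $a_N$ can be made to work, but the pivotal step is wrong as written. You assert that a twisted Hilbert space of order $n+k$ is ``a fortiori'' one of order $k+1$, and that the ``for some $\Leftrightarrow$ for every'' equivalence justifies this; it does not. That equivalence only re-splits a space of a \emph{fixed} order $m$ along the various partitions $i+j=m$; it never lowers $m$, and downward monotonicity of the order is false: if it held, every $\mathscr Z_m$ would be of order $2$ and hence satisfy $a_N\le C\log_2 N$ by the first Lemma, contradicting Lemma~\ref{lem:an} for $m\ge 3$ (already $\mathscr Z_2$, of order $2$, is not of order $1$). The statement you actually need -- that $\mathscr Z_n\oplus\mathscr Z_k$ (with $n\le k$) is a twisted Hilbert space of order $k+1$, indeed of order $k$ -- is true, but it requires the absorption $\mathscr Z_j\oplus\mathscr Z_j\simeq\mathscr Z_j$ (the remark preceding the Corollary) and an induction on $k-n$ via $0\to\mathscr Z_n\oplus\mathscr Z_{k-1}\to\mathscr Z_n\oplus\mathscr Z_k\to\ell_2\to 0$, with base case $\mathscr Z_n\oplus\mathscr Z_n\simeq\mathscr Z_n$; your parenthetical sketch only re-derives order $n+k$ and never descends. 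The end-game is also flawed: even granting order $k+1$, the exponent comparison needs $n\ge 2$, and your ``by symmetry also $k\ge 2$'' would require $\mathscr Z_n\oplus\mathscr Z_k$ to be of order $n+1$; for $n=1<k$ that claim is false as soon as $k\ge 3$ (the space contains $\mathscr Z_k$, so $a_N\ge c\log_2^{k-1}N$, incompatible with order $2$), and in no case is it established by your argument. So the cases $n=1$, $k\ge 2$ remain open, not just $n=k=1$.

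Once repaired, your route does work and differs from the paper's. Either prove by the induction above that $\mathscr Z_n\oplus\mathscr Z_k$ has order $\max(n,k)=k$, or avoid orders altogether via the elementary estimate $a_N(X\oplus Y)\le C_0\max\bigl(a_N(X),a_N(Y)\bigr)$ for a universal constant $C_0$; either way the first Lemma gives $a_N(\mathscr Z_n\oplus\mathscr Z_k)\le C\log_2^{k-1}N$, while Lemma~\ref{lem:an} gives $a_N(\mathscr Z_{n+k})\ge c\log_2^{n+k-1}N$, and since $n+k-1>k-1$ for all $n\ge 1$ the contradiction is immediate, with no case analysis at all. For comparison, the paper argues in one line from Theorem~\ref{notsub}: assuming $n\le k$, the space $\mathscr Z_n\oplus\mathscr Z_k$ embeds into $\mathscr Z_k\oplus\mathscr Z_k\simeq\mathscr Z_k$, whereas $\mathscr Z_{n+k}$ does not embed into $\mathscr Z_k$; both proofs ultimately rest on the same two Lemmata, but the paper's packaging through Theorem~\ref{notsub} sidesteps any discussion of the order of a direct sum.
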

\begin{proof} Assuming $n\leq k,  \mathscr Z_n  \oplus  \mathscr Z_k$ is a subspace of $\mathscr Z_{k}\oplus \mathscr Z_{k} \simeq \mathscr Z_{k}$, while $ \mathscr Z_{n+k}$ is not.\end{proof}

\begin{corollary}\label{ab=nk} Let $0\leq k,j\leq n$. $ \mathscr Z_{n-k}\oplus \mathscr Z_{n+k} \simeq \mathscr Z_{n-j}  \oplus  \mathscr Z_{n+j}$ if and only if $k=j$.\end{corollary}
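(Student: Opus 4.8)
The plan is to reduce the nontrivial (``only if'') implication to Theorem~\ref{notsub}. The key point is a stability property of the order under direct sums: \emph{if $1\le a\le b$ then $\mathscr Z_a\oplus\mathscr Z_b$ is a twisted Hilbert space of order $b$}. Granting this, suppose $0\le j<k\le n$ and that $\mathscr Z_{n-k}\oplus\mathscr Z_{n+k}\simeq\mathscr Z_{n-j}\oplus\mathscr Z_{n+j}$. Taking $a=n-j$ and $b=n+j$ in the claim, the right-hand side is a twisted Hilbert space of order $n+j$; on the other hand $\mathscr Z_{n+k}$ is a complemented subspace of the left-hand side, hence a subspace of a twisted Hilbert space of order $n+j<n+k$, which contradicts Theorem~\ref{notsub}. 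The ``if'' direction is trivial, so this finishes the argument.

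It remains to prove the stability claim, where the only real work lies. If $a=b$ it is immediate since $\mathscr Z_a\oplus\mathscr Z_a\simeq\mathscr Z_a$, which has order $a$. If $a<b$, I would start from the entwining exact sequence \eqref{eq:nn+kk} in the form $0\to\mathscr Z_{b-a}\to\mathscr Z_b\to\mathscr Z_a\to 0$, form its direct sum with the identity on $\mathscr Z_a$ to get $0\to\mathscr Z_{b-a}\to\mathscr Z_b\oplus\mathscr Z_a\to\mathscr Z_a\oplus\mathscr Z_a\to 0$, and then post-compose the quotient map with an isomorphism $\mathscr Z_a\oplus\mathscr Z_a\to\mathscr Z_a$. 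The result is a short exact sequence $0\to\mathscr Z_{b-a}\to\mathscr Z_a\oplus\mathscr Z_b\to\mathscr Z_a\to 0$ whose outer terms are twisted Hilbert spaces of orders $b-a$ and $a$; by definition $\mathscr Z_a\oplus\mathscr Z_b$ has order $(b-a)+a=b$. That one such decomposition suffices is precisely the ``for some $\Leftrightarrow$ for every'' equivalence established through the two $3\times 3$ diagrams preceding Theorem~\ref{notsub}.

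The main obstacle is essentially bookkeeping: one must absorb the spurious copy of $\mathscr Z_a$ using $\mathscr Z_a\oplus\mathscr Z_a\simeq\mathscr Z_a$ so that the order comes out to be exactly $b$, rather than the useless $2n$ produced by the naive split sequence $0\to\mathscr Z_{n+j}\to\mathscr Z_{n-j}\oplus\mathscr Z_{n+j}\to\mathscr Z_{n-j}\to 0$ (note $2n\ge n+k$, so that value says nothing); once the sharp order $n+j$ is available, Theorem~\ref{notsub} does the rest. Degenerate cases ($j=0$, or $k=n$ read with $\mathscr Z_0=\{0\}$) are covered by the same reasoning or directly by $\mathscr Z_n\oplus\mathscr Z_n\simeq\mathscr Z_n$. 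As an alternative one can bypass Theorem~\ref{notsub}: by the two Lemmata above $a_n(\mathscr Z_m)\asymp\log_2^{m-1}n$, and since $a_n(X\oplus Y)$ is comparable to $\max\{a_n(X),a_n(Y)\}$, one gets $a_n(\mathscr Z_{n-k}\oplus\mathscr Z_{n+k})\asymp\log_2^{n+k-1}n$; an isomorphism with $\mathscr Z_{n-j}\oplus\mathscr Z_{n+j}$ would then force $n+k=n+j$.
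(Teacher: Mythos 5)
Your proof is correct, and it reaches the same contradiction as the paper --- an application of Theorem~\ref{notsub} to the copy of $\mathscr Z_{n+k}$ sitting inside the right-hand side --- but by a different mechanism. The paper's argument is a two-line embedding chain: since $n-j\le n+j$, one has $\mathscr Z_{n-j}\oplus\mathscr Z_{n+j}\hookrightarrow \mathscr Z_{n+j}\oplus\mathscr Z_{n+j}\simeq\mathscr Z_{n+j}$ (using $\imath_{n-j,n+j}$ on the first factor and the self-similarity of $\mathscr Z_{n+j}$), so an isomorphism would make $\mathscr Z_{n+k}$ a subspace of $\mathscr Z_{n+j}$, which is excluded by the ``in particular'' clause of Theorem~\ref{notsub}. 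You instead compute the \emph{order} of $\mathscr Z_a\oplus\mathscr Z_b$ as a twisted Hilbert space: summing $0\to\mathscr Z_{b-a}\to\mathscr Z_b\to\mathscr Z_a\to 0$ with the identity on $\mathscr Z_a$ and absorbing $\mathscr Z_a\oplus\mathscr Z_a\simeq\mathscr Z_a$ yields a witnessing sequence of order $(b-a)+a=b$, after which the main clause of Theorem~\ref{notsub} applies. This is sound (note that $b-a\ge 1$ and $a\ge 1$ are exactly what the inductive definition of order requires, and the $a=b$ and degenerate cases are handled as you say). Your route is slightly longer but isolates a reusable fact --- the order of a direct sum is the maximum of the orders --- and your closing alternative via $a_n(\mathscr Z_m)\asymp\log_2^{m-1}n$ together with $a_n(X\oplus Y)\asymp\max\{a_n(X),a_n(Y)\}$ is also valid and bypasses the order bookkeeping entirely (just beware the clash between the $n$ indexing the spaces and the $n$ in $a_n$). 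Both arguments ultimately rest on the same two ingredients: Theorem~\ref{notsub} and $\mathscr Z_a\oplus\mathscr Z_a\simeq\mathscr Z_a$.
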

\begin{proof} Assume otherwise, and assume $j<k$. Then $\mathscr Z_{n+k}$ would be a subspace of $\mathscr Z_{n-j}  \oplus  \mathscr Z_{n+j}$, which is in turn a subspace of $\mathscr Z_{n+j}$, and that is impossible.
\end{proof}

It is likely that $ \mathscr Z_m$ does not contain complemented copies of $ \mathscr Z_n$ for $n<m$, which would imply that
$ \mathscr Z_{j}\oplus \mathscr Z_{k} \simeq \mathscr Z_{n}  \oplus  \mathscr Z_{m}$ if and only if $j=n, k=m$ or $j=m, k=n$.

\section{Duality issues}\label{sec:duality}

This Section studies the conjugate spaces of the Rochberg spaces associated to an admissible space and the corresponding (dual) exact sequences. The material presented here is closely related to \cite{Coifman1982, rochpac, Cw2014, cck} and has loose connections with \cite{cabecastdu, castmoredu, kaltpeck}. This section deals with spaces of analytic functions arising from admissible interpolation families. Let $\mathcal X$ be such a family, with spaces $(X_u)_{u\in\partial\U}$, containing space $\Sigma$, intersection space $\Delta$ and containing function $k:\partial\U\To(0,\infty)$. We also fix a conformal map $\varphi:\D\To\U$, as in Section \ref{sec:int-fam}. Let $\F=\F(\mathcal X)$ and $\G=\G(\mathcal X)$ be as in Section~\ref{sec:int-fam} and let us keep the traditional notation $X_z^{(n)}$ for $\Fn_z$, where $z\in\U$. When $n=1$ we just write $X_z$. It is an easy consequence from $\G$ being dense in $\F$ that $\Delta^n$ is dense in $X_z^{(n)}$ for all $z\in\U$ and all $n$. Besides, it follows from Lemma~\ref{lem:KGvsK} that for each $x\in\Delta^n$ and every $\e>0$ there is $g\in\G$ such that $x=\tau_{(n,0]}g(z)$ and $\|g\|_\F\leq (1+\e)\|x\|_{X_z^{(n)}}$. This simplification will play a role in the identification of the dual of $X_z^{(n)}$.

\subsection{Derivation of duals of interpolation spaces}\label{sec:W}
Adapting the techniques from \cite{Coifman1982} we may find the dual of the intermediate spaces $X_{z}$ the following way: let $\F^\bigstar$ be the space of functions $h : \U \rightarrow \Delta^\star$ (the algebraic dual of $\Delta$) such that
\begin{itemize}
\item $z \longmapsto \inp{h(\varphi(z))}{x}$ is a function in $N^+$ for every $x \in \Delta$;
\item there is $C > 0$ such that, for each $x\in\Delta$ one has $\lim_{w \rightarrow z} \left|\inp{h(\varphi(w))}{x}\right| \leq C\|x\|_{\varphi(z)}
$ for almost every $z \in \mathbb{T}$, where the limit is nontangential.
\end{itemize}
The space $\F^\bigstar$ will be normed taking $\|h\|_{\F^\bigstar}$ as the infimum of the numbers  $C$ satisfying the preceding condition. The question of whether  $\F$ is irrelevant for the subsequent discussion. For each $z\in\U$ there is an isometry between $X_{z}^*$ and the ``intermediate'' space
$$
({\F^\bigstar})_z = \{\xi\in\Delta^\star: \xi= h(z) \text{ for some } h \in \F^\bigstar\},
$$
with the natural quotient norm. More precisely, $\xi\in\Delta^\star$ belongs to $
({\F^\bigstar})_z$ if and only if the functional $x\in\Delta\longmapsto\langle\xi,x\rangle\in\mathbb C$ is bounded in the norm of $X_z$ in which case the norm of the obvious extension in $X_z^*$ agrees with the norm of $\xi$ in $({\F^\bigstar})_z$. We take this fact, proved in \cite[Theorem~3.1]{Coifman1982} when $\Delta$ is the whole intersection space, as the starting point of this section. From now on we identify $X_{z}^*$ with that subset of $\Delta^\star$, that is, we use $\Delta^\star$ as a ``containing space'' for the family $X_u^*$, with $u\in\U$. In this way the space $\F^\bigstar$ can be used to construct the derived spaces of the family $X_{z}^*$ using the ideas of Section~\ref{sec:entwining}.

First, we need a substitute for the derivatives: given $h\in \F^\bigstar$ and $n\geq 0$ we define $h^{(n)}: \U\To\Delta^\star$ by the formula
\[
\inp{h^{(n)}(z)}{x} =\frac{d^n}{dz^n}\:\inp{h(z)}{x}\quad\quad(x\in\Delta).
\]
The meaning of the expressions such as $\tau_A(h), \tau_A(h)(z)$, and the like should be obvious in this context. Now, set
$$
({\F^\bigstar})^{(n)}_z=\left\{(\xi_{n-1}, \dots, \xi_0)\in (\Delta^\star)^n: \text{ there is }  h\in \F^\bigstar\; \text{such that } \xi_i= \frac{h^{(i)}(z)}{i!} \text { for } 0\leq i<n	\right\},
$$
with the quotient norm. At this juncture most structural properties of the spaces $({\F^\bigstar})^{(n)}_z$ remain obscure: for instance if they are complete, or Hausdorff, or if $
({\F^\bigstar})^{(n)}_z$ contains $
({\F^\bigstar})^{(k)}_z$ when $k<n$. All these thrilling questions will we settled in the next section.

\subsection{Duality of the twisted sums}
The first part of the following result was proved by Rochberg for finite dimensional spaces in \cite[Theorem 4.1]{rochpac}.

\begin{proposition}\label{props-duality}
For each $z\in\U$ and each $n\geq 1$, there is a linear homeomorphism $T_n : ({\F^\bigstar})^{(n)}_z \longrightarrow (X^{(n)}_z)^*$ given by
\begin{equation}\label{eq:Tn}
T_n(\xi_{n-1}, \cdots, \xi_0)(x_{n-1}, \cdots, x_0) = \sum_{j=0}^{n-1} \langle \xi_j,x_{n-j-1} \rangle
\end{equation}
for $(\xi_{n-1}, \cdots, \xi_0)\in ({\F^\bigstar})^{(n)}_z$ and $x_j\in\Delta, 0\leq j <n$. In particular, $({\F^\bigstar})^{(n)}_z$ is a Banach space. Moreover,
\begin{equation}\label{eq:||Tn||}
\left\| T_n : ({\F^\bigstar})^{(n)}_z \longrightarrow (X_z^{(n)})^*\right\|\leq \frac{1}{\dist(z,\partial\U)^{n-1}}.
\end{equation}
\end{proposition}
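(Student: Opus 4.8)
The plan is to work with the duality pairing $\langle\cdot,\cdot\rangle$ between $(\Delta^\star)^n$ and $\Delta^n$ given by the bilinear form on the right-hand side of \eqref{eq:Tn}, and to prove separately that $T_n$ is well-defined and bounded with the stated norm, and that it is surjective with bounded inverse. For the well-definedness and the norm estimate, I would fix $(\xi_{n-1},\dots,\xi_0)\in({\F^\bigstar})^{(n)}_z$, pick $h\in\F^\bigstar$ with $\xi_i=h^{(i)}(z)/i!$ and $\|h\|_{\F^\bigstar}\le(1+\e)\|(\xi_{n-1},\dots,\xi_0)\|$, fix $(x_{n-1},\dots,x_0)\in X_z^{(n)}$ with $x_i=g^{(i)}(z)/i!$ for some $g\in\G$ and $\|g\|_\F\le(1+\e)\|(x_{n-1},\dots,x_0)\|_{X_z^{(n)}}$ (using the density remark and Lemma~\ref{lem:KGvsK} recorded just before Section~\ref{sec:W}), and consider the scalar analytic function $F(w)=\langle h(w),g(w)\rangle$, which lies in $N^+$ because $g=\sum_j g_j x_j$ with $g_j\circ\varphi\in N^+$ and each $w\mapsto\langle h(w),x_j\rangle$ is in $N^+$. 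The key computation is that, by Leibniz's rule, the $(n-1)$-st Taylor coefficient of $F$ at $z$ equals $\sum_{j=0}^{n-1}\langle\xi_j,x_{n-1-j}\rangle=T_n(\xi)(x)$. So the whole point is to bound the $(n-1)$-st Taylor coefficient of an $N^+$ function in terms of a boundary bound; this is exactly where the factor $\dist(z,\partial\U)^{-(n-1)}$ will come from, via the Cauchy estimate $|F^{(n-1)}(z)/(n-1)!|\le r^{-(n-1)}\sup_{|w-z|=r}|F(w)|$ applied with $r\to\dist(z,\partial\U)$, combined with the boundary estimate $\limsup_{w\to u}|\langle h(w),g(w)\rangle|\le\|h\|_{\F^\bigstar}\|g(u)\|_u\le\|h\|_{\F^\bigstar}\|g\|_\F$ a.e.\ on $\partial\U$ and the maximum principle for $N^+$ functions (an $N^+$ function bounded a.e.\ on the boundary by a constant is bounded by that constant inside). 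Letting $\e\to0$ gives $|T_n(\xi)(x)|\le\dist(z,\partial\U)^{-(n-1)}\|\xi\|\,\|x\|_{X_z^{(n)}}$, which is \eqref{eq:||Tn||} and shows $T_n$ maps into $(X_z^{(n)})^*$ continuously with the claimed norm bound; since $\Delta^n$ is dense in $X_z^{(n)}$ the functional extends uniquely.

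For surjectivity, I would argue by induction on $n$ using the entwining exact sequence \eqref{eq:nn+kk} with $k=1$, namely $0\to X_z^{(n-1)}\xrightarrow{\imath}X_z^{(n)}\xrightarrow{\pi}X_z\to 0$, and its dual sequence $0\to(X_z)^*\to(X_z^{(n)})^*\to(X_z^{(n-1)})^*\to 0$. The base case $n=1$ is precisely the Coifman–Cwikel–Rochberg–Weiss identification $(X_z)^*=(\F^\bigstar)_z$ from \cite[Theorem~3.1]{Coifman1982}, which the text says we take as given. For the inductive step I would build a commutative ladder whose top row is the exact sequence obtained by applying the construction of Section~\ref{sec:entwining} to $\F^\bigstar$ (so $0\to(\F^\bigstar)^{(n-1)}_z\to(\F^\bigstar)^{(n)}_z\to(\F^\bigstar)_z\to 0$, once we know the embedding/quotient maps behave; this is the content of the ``thrilling questions'' deferred to this section) and whose bottom row is the dual of \eqref{eq:nn+kk}, with vertical maps $T_{n-1}$, $T_n$, $T_1$. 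Checking the squares commute is a direct pairing computation: $\pi$ on $X_z^{(n)}$ forgets the top coordinates, so its dual embeds $(X_z)^*$ as functionals depending only on the last coordinate, matching the left inclusion $(\F^\bigstar)^{(n-1)}_z\to(\F^\bigstar)^{(n)}_z$ under $T$; dually the quotient $(\F^\bigstar)^{(n)}_z\to(\F^\bigstar)_z$ reading off $\xi_{n-1}$ corresponds to restricting a functional on $X_z^{(n)}$ to the image of $\imath_{1,n}$. Once the ladder commutes and the outer vertical maps $T_{n-1}$, $T_1$ are isomorphisms (induction hypothesis and base case), the five lemma gives that $T_n$ is an isomorphism; in particular $({\F^\bigstar})^{(n)}_z$ is complete, a Banach space.

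The main obstacle, and the place requiring genuine care rather than bookkeeping, is establishing that the $\F^\bigstar$-construction really produces an exact sequence $0\to(\F^\bigstar)^{(n-1)}_z\to(\F^\bigstar)^{(n)}_z\to(\F^\bigstar)_z\to 0$ in the first place — i.e.\ that $\imath_{n-1,n}$ is an isomorphic embedding into $({\F^\bigstar})^{(n)}_z$ and that the relevant maps are bounded — since $\F^\bigstar$ is not an acceptable space of $\Sigma$-valued functions (its values lie in the merely algebraic dual $\Delta^\star$, it carries no module structure, and there is no evaluation-boundedness hypothesis available a priori), so one cannot simply quote Section~\ref{sec:entwining}. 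The way around this is precisely to let the duality do the work: rather than prove exactness of the top row intrinsically, I would \emph{define} the maps on $(\F^\bigstar)^{(\bullet)}_z$ to be the ones transported from the (known) dual sequence via the partially-established $T$'s, or equivalently deduce exactness of the top row a posteriori from the five lemma once $T_n$ is shown injective by the norm estimate and the bottom row is known exact. Concretely: from the norm bound $T_n$ is a well-defined injection; the bottom dual sequence of \eqref{eq:nn+kk} is exact because \eqref{eq:nn+kk} is; a diagram chase then forces $T_n$ onto and forces the top row to be exact simultaneously. This interlocking of ``$T_n$ is iso'' with ``top row is exact'' is the delicate point, and I would present it as a single induction in which both statements are carried together.
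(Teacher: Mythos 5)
Your first half (the norm estimate) is exactly the paper's argument: form $F(w)=\langle h(w),g(w)\rangle$ with $g\in\G$, note $F\circ\varphi\in N^+$ with boundary values bounded a.e.\ by $\|h\|_{\F^\bigstar}\|g\|_\F$, upgrade to $H^\infty$ by the maximum principle, apply Cauchy estimates at radius $r\to\dist(z,\partial\U)$, and identify the $(n-1)$-st Taylor coefficient with $T_n(\xi)(x)$ by Leibniz (the paper proves this Leibniz identity by induction, since derivatives of $h$ are defined only weakly, but that is routine). Your second half also has the paper's architecture: base case $T_1$ = the Coifman--Cwikel--Rochberg--Sagher--Weiss isometry, then an induction by diagram chase in a ladder whose bottom row is the dual of (\ref{eq:nn+kk}); and you are right that exactness of the top row in the middle is never needed at this stage and can be postponed — the paper does exactly that.

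The genuine gap is your claim that \emph{all} the needed properties of the top row can be extracted ``a posteriori'' by letting the duality do the work. The chase needs two inputs about the maps $\varpi$ and $\jmath$ on the $\F^\bigstar$-side that are not consequences of commutativity, injectivity of $T_n$ and exactness of the bottom row: (i) $\varpi$ is a surjection with norm control (this is indeed trivial: truncate a representative $h$), and, crucially, (ii) the zero-padding map $\jmath$ really lands in $({\F^\bigstar})^{(n)}_z$ and is bounded. Point (ii) is needed at the last step of the chase: having written $x^*-T_n\xi=\pi^*y^*$ and $y^*=T_k(\eta_0)$ (induction/base case), you must exhibit an element of $({\F^\bigstar})^{(n)}_z$ that $T_n$ sends to $\pi^*y^*$; since $T_n$ is injective on tuples, the only candidate is the padded tuple $\jmath(\eta_0)$, so you are forced to prove that it is represented by some $H\in\F^\bigstar$ with $\|H\|$ controlled by $\|\eta_0\|$. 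This cannot be conjured from the diagram: abstractly, if $B'\subset A^*\oplus C^*$ is a proper subspace surjecting onto $C^*$ but not containing $A^*\oplus 0$, then injectivity, the right commuting square, surjectivity of the top quotient and exactness of the bottom row all hold while the middle vertical map fails to be onto. The paper supplies (ii) as a separate short lemma: multiply a representative $h$ by $P\circ\varphi^{-1}$, where $P$ is the polynomial of \cite[Lemma~1]{cck} with $(P\circ\varphi^{-1})^{(i)}(z)=i!\,\delta_{ik}$; this stays in $\F^\bigstar$ because $N^+$ is an algebra and the boundary bound only picks up $\sum_i|a_i|$. Add that lemma and your interlocked induction goes through. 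Two smaller repairs: injectivity of $T_n$ does not follow ``from the norm bound'' but from pairing with elements of $\Delta^n\subset X^{(n)}_z$; and your identification of the commuting squares has the coordinates flipped — under (\ref{eq:Tn}), $\imath_{1,n}^*$ corresponds to reading off $\xi_0$, not $\xi_{n-1}$, and $\pi_{n,1}^*$ corresponds to padding into the top slot.
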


As the reader may guess, the lion's share of the proof is the boundedness of the pairing (\ref{eq:Tn}).
We shall need a number of intermediate steps, some new notations and a bit of function theory.


Given integers $n$ and $k$, we consider the maps
$\jmath_{n,n+k}: (\Delta^\star)^n\To (\Delta^\star)^{n+k}$ and
$\varpi_{n+k,k}: (\Delta^\star)^{n+k}\To (\Delta^\star)^{k}$  defined by
$$
\jmath_{n,n+k}((\xi_{n-1}, \dots, \xi_0))= (\xi_{n-1}, \dots, \xi_0, \underbrace{0, \ldots ,
            0}_{k \text{ times}})\quad\text{and}\quad
            \varpi_{n+k,k}(\xi_{n+k-1}, \dots, \xi_k, \xi_{k-1}, \dots, \xi_0)= (\xi_{k-1}, \dots, \xi_0)
$$
We label them this way to distinguish them from the maps $\imath_{n,n+k}: \Sigma^{n}\To \Sigma^{n+k}$ and $\pi_{n+k,k} :\Sigma^{n+k}\To \Sigma^{k}$ appearing in (\ref{eq:nn+kk}), although they are formally the same maps.

\begin{lemma}
For every $n, k \geq 1$ and every $z\in \U$, the map $\jmath_{n, n+k}$ is bounded from $({\F^\bigstar})^{(n)}_z$ to $({\F^\bigstar})^{(n+k)}_z$ and  $\varpi_{n+k,k}$ is an isometric quotient map from $({\F^\bigstar})^{(n+k)}_z$ to $({\F^\bigstar})^{(k)}_z$. 
\end{lemma}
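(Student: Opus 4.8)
The plan is to handle the two assertions separately. The claim about $\varpi_{n+k,k}$ is essentially formal: if $h\in\F^\bigstar$ witnesses $(\xi_{n+k-1},\dots,\xi_0)\in({\F^\bigstar})^{(n+k)}_z$, meaning $\xi_i=\tau_i(h)(z)$ for $0\le i<n+k$, then the same $h$ witnesses the truncated array $(\xi_{k-1},\dots,\xi_0)$ as an element of $({\F^\bigstar})^{(k)}_z$, so taking the infimum over admissible $h$ shows $\varpi_{n+k,k}$ is well defined and norm non-increasing. Conversely, given $(\zeta_{k-1},\dots,\zeta_0)\in({\F^\bigstar})^{(k)}_z$ and $\e>0$, I would pick $h\in\F^\bigstar$ with $\tau_i(h)(z)=\zeta_i$ for $0\le i<k$ and $\|h\|_{\F^\bigstar}<\|(\zeta_{k-1},\dots,\zeta_0)\|+\e$; then $(\tau_{n+k-1}(h)(z),\dots,\tau_0(h)(z))$ belongs to $({\F^\bigstar})^{(n+k)}_z$, has norm below $\|(\zeta_{k-1},\dots,\zeta_0)\|+\e$, and is sent by $\varpi_{n+k,k}$ onto $(\zeta_{k-1},\dots,\zeta_0)$. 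This makes $\varpi_{n+k,k}$ an isometric quotient map.

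For $\jmath_{n,n+k}$ the point is that inserting $k$ zeros in the leading slots requires a function whose first $k$ Taylor coefficients at $z$ vanish, so I would proceed via a multiplier. First I would record that $\F^\bigstar$ is a contractive module over $H^\infty(\U)$: for $b\in H^\infty(\U)$ the function $bh$ given by $\inp{(bh)(w)}{x}=b(w)\inp{h(w)}{x}$ still satisfies both conditions defining $\F^\bigstar$, because $N^+$ is an algebra and the nontangential boundary value of $b\circ\varphi$ is bounded by $\|b\|_{H^\infty(\U)}$ a.e.\ on $\T$. Next, fixing a conformal $\varphi_z:\U\to\D$ with $\varphi_z(z)=0$, I would solve a triangular linear system (possible since $\varphi_z(w)=c_1(w-z)+\cdots$ with $c_1\ne 0$) for the coefficients of a polynomial $P$ so that $\psi:=P\circ\varphi_z\in H^\infty(\U)$ has Taylor expansion $(w-z)^k$ modulo $(w-z)^{n+k}$ at $z$. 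Then, given $(\xi_{n-1},\dots,\xi_0)\in({\F^\bigstar})^{(n)}_z$ witnessed by $h$, the function $H:=\psi h$ is in $\F^\bigstar$ with $\|H\|_{\F^\bigstar}\le\|\psi\|_{H^\infty(\U)}\|h\|_{\F^\bigstar}$, and Leibniz's rule for the scalar functions $w\mapsto\inp{H(w)}{x}=\psi(w)\inp{h(w)}{x}$ gives $\tau_i(H)(z)=0$ for $0\le i<k$ and $\tau_{k+j}(H)(z)=\tau_j(h)(z)=\xi_j$ for $0\le j<n$; that is, $H$ witnesses $\jmath_{n,n+k}(\xi_{n-1},\dots,\xi_0)$, and taking the infimum over $h$ gives $\|\jmath_{n,n+k}\|\le\|\psi\|_{H^\infty(\U)}$.

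I expect the only genuinely delicate points to be internal to the multiplier step: checking that multiplication by a bounded analytic function preserves membership in $\F^\bigstar$ — in particular that the nontangential boundary estimate survives the multiplication — and checking that a polynomial $P$ with the prescribed truncated composition behaviour exists, which is a finite degree count resting on $c_1\ne 0$. The rest is routine manipulation of quotient norms and the Leibniz rule.
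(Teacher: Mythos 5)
Your proof is correct and follows essentially the same route as the paper: the quotient statement is handled by the obvious truncation/extension of witnesses, and the embedding is obtained by multiplying a witness $h$ by a bounded function of the form $P\circ\varphi_z$ whose Taylor coefficients at $z$ are $\delta_{ik}$ up to order $n+k$, then applying Leibniz. The only cosmetic difference is that you verify the existence of that polynomial directly via a triangular system (and make the $H^\infty(\U)$-module property of $\F^\bigstar$ explicit), whereas the paper simply invokes Lemma~1 of \cite{cck} and bounds the multiplier by $\sum_i|a_i|$.
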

\begin{proof}
By \cite[Lemma~1]{cck} there is a polynomial $P$ of degree at most $n+k-1$ such that
$(P\circ\varphi^{-1})^{(i)}(z)=i!\delta_{ik}$ (Kronecker delta) for $0\leq i<n+k$.
Pick $\xi=(\xi_{n-1}, \cdots, \xi_0)$ in $({\F^\bigstar})^{(n)}$ and $h\in {\F^\bigstar}$ such that $\tau_{(n,0]}h(z)=\xi$.
Consider the function $H=(P\circ\varphi^{-1})\dot h$. Then $H\in {\F^\bigstar}$,
$$
\tau_{(n+k-1,0]}H(z)=(\xi_{n-1}, \cdots, \xi_0, 0,\ldots, 0)= \jmath_{n,n+k}(\xi),
$$
and $\|H\|_{\F^\bigstar}\leq \left(\sum_i|a_i|\right)\|h\|_{\F^\bigstar}$, where $a_i$ are the coefficients of $P$, so that $\|\jmath_{n,n+ k}:({\F^\bigstar})^{(n)}_z\To ({\F^\bigstar})^{(n+k)}_z\| \leq \sum \left|a_i\right|$. The second part is trivial.\end{proof}

\begin{lemma}\label{lem-h}
Let $h \in {\F^\bigstar}$ and $g \in \mathscr{G}$. Then the function $f : \U \To \mathbb{C}$ given by $f(z) = \langle h(z), g(z)\rangle$
is bounded, analytic on $\U$ and one has
\begin{equation}\label{eq-derivative-h}
f^{(n)}(z)=
 n! \sum\limits_{j=0}^n \left\langle\frac{h^{(j)}(z)}{j!} , \frac{g^{(n-j)}(z)}{(n-j)!}\right\rangle\quad\quad\text{and}\quad\quad\frac{\left|f^{(n)}(z)\right|}{n!}\leq \frac{\|h\|_{\F^\bigstar} \|g\|_\F}{\dist(z,\partial \U)^n}.
\end{equation}
\end{lemma}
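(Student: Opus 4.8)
The plan is to reduce everything to the scalar function $f(z)=\langle h(z),g(z)\rangle$ and to exploit that $h\in\F^\bigstar$ and $g\in\G$ interact nicely on finite linear combinations of elements of $\Delta$. First I would write $g=\sum_{i=1}^m g_i x_i$ with $g_i\circ\varphi\in N^+$ and $x_i\in\Delta$, so that $f(z)=\sum_i g_i(z)\langle h(z),x_i\rangle$; by the very definition of $\F^\bigstar$ each $z\mapsto\langle h(\varphi(z)),x_i\rangle$ is in $N^+$, and since the $g_i\circ\varphi$ are too, $f\circ\varphi$ is a finite sum of products of $N^+$ functions, hence analytic on $\D$; composing back with $\varphi^{-1}$ shows $f$ is analytic on $\U$. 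Boundedness of $f$ on $\U$ is seen by combining the esssup bound $\|g(u)\|_u\le\|g\|_\F$ on $\partial\U$ with the boundary estimate $\lim_{w\to z}|\langle h(\varphi(w)),x\rangle|\le\|h\|_{\F^\bigstar}\|x\|_{\varphi(z)}$ for $x\in\Delta$; taking $x=g(u)$ (legitimate since $g(u)\in\Delta$) and a maximum principle / Phragmén–Lindelöf argument across the boundary gives $|f|\le\|h\|_{\F^\bigstar}\|g\|_\F$ on all of $\U$. Actually the cleanest route to boundedness is: $f\circ\varphi\in N^+$ (product/sum of $N^+$ functions) and its nontangential boundary values are bounded by $\|h\|_{\F^\bigstar}\|g\|_\F$ a.e.\ on $\T$, so by the characterization of $N^+\cap L^\infty(\T)$ it lies in $H^\infty$ with the same bound.

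Next I would establish the Leibniz formula (\ref{eq-derivative-h}). Since $f$ is analytic on $\U$, its Taylor coefficients at $z$ are well defined, and for each fixed $x\in\Delta$ the scalar function $z\mapsto\langle h(z),x\rangle$ is analytic with derivatives given by the defining formula $\langle h^{(j)}(z),x\rangle$. Writing $f(z)=\sum_i g_i(z)\langle h(z),x_i\rangle$ and applying the ordinary Leibniz rule to each scalar product $g_i(z)\cdot\langle h(z),x_i\rangle$, then reassembling, yields
$$
f^{(n)}(z)=\sum_{j=0}^n\binom{n}{j}\sum_i g_i^{(n-j)}(z)\langle h^{(j)}(z),x_i\rangle
=\sum_{j=0}^n\binom{n}{j}\Bigl\langle h^{(j)}(z),\textstyle\sum_i g_i^{(n-j)}(z)x_i\Bigr\rangle,
$$
and since $\sum_i g_i^{(n-j)}(z)x_i=g^{(n-j)}(z)$, dividing by $n!$ and rewriting $\binom{n}{j}/n!=1/(j!(n-j)!)$ gives the first identity in (\ref{eq-derivative-h}). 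One small point worth flagging: $g^{(n-j)}(z)$ need not lie in $\Delta$, but the pairing $\langle h^{(j)}(z),\cdot\rangle$ still makes sense on it because it is a finite combination $\sum_i g_i^{(n-j)}(z)x_i$ of elements of $\Delta$; this is exactly why the computation is done on $\G$ rather than on $\F$.

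For the Cauchy estimate in (\ref{eq-derivative-h}) I would use that $f$ is a bounded analytic function on $\U$ with $\|f\|_\infty\le\|h\|_{\F^\bigstar}\|g\|_\F$, so by the Cauchy integral formula on a disc of radius $\dist(z,\partial\U)$ centred at $z$ (which is contained in $\U$ since $\U$ is open), $|f^{(n)}(z)|/n!\le\|f\|_\infty/\dist(z,\partial\U)^n$, which is the claimed bound. The main obstacle I anticipate is the boundedness of $f$ itself, i.e.\ justifying the passage from the a.e.\ boundary estimate in the definition of $\F^\bigstar$ (which is stated for a \emph{fixed} $x\in\Delta$ with a constant that may a priori depend on $x$ — though the infimum-of-$C$ normalization makes it uniform) to a genuine $H^\infty$ bound on $f$. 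The delicate issue is that the bound $\lim_{w\to z}|\langle h(\varphi(w)),x\rangle|\le\|h\|_{\F^\bigstar}\|x\|_{\varphi(z)}$ is a statement about each coordinate functional applied to the limit, and one must see that plugging $x=g(u)$ and taking nontangential limits commutes appropriately — here one uses that $g\circ\varphi\in N^+$-valued with values in $\Delta$ and that $\|g(\varphi(z))\|_{\varphi(z)}\le\|g\|_\F$ a.e., together with the fact that a function in $N^+$ whose boundary values are essentially bounded belongs to $H^\infty$ (Smirnov's theorem). Once that is in place, everything else is routine.
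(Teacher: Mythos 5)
Your proposal is correct and follows essentially the same route as the paper: analyticity and boundedness are obtained from $f\circ\varphi\in N^+$ together with the a.e.\ bound $|F|\leq\|h\|_{\F^\bigstar}\|g\|_\F$ on the nontangential boundary values (so that $f\circ\varphi\in H^\infty$ by Smirnov), and the derivative estimate then comes from Cauchy's estimates on discs of radius $r<\dist(z,\partial\U)$. The only cosmetic difference is the verification of the Leibniz identity: you expand $g=\sum_i g_i x_i$ and apply the scalar Leibniz rule term by term, whereas the paper proves the same formula by induction on $n$; both rest on the same weak definition of $h^{(j)}$, so nothing substantive changes.
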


\begin{proof}
We begin by noticing that by our assumptions, and the very definition of $\G$, the composition $f \circ \varphi$ is in $N^+$, and therefore has  almost everywhere nontangential limits on $\mathbb{T}$.
If we denote by $F$ the boundary values of $f \circ \varphi$, we  have $|F(z)| \leq \|h\|_{\F^\bigstar} \|g\|_\F$ for almost every  $z\in\mathbb T$, so that $z\in\T \mapsto F(z)$ is in $L_{\infty}(\T)$. This implies that $f\circ\varphi \in H^{\infty}$, and therefore $f$ is bounded on $\U$.

We will establish (\ref{eq-derivative-h}) by induction on $n\geq 0$.
The initial step ($n=0$) is the definition of $f$. Suppose (\ref{eq-derivative-h}) is valid for a given $n\geq 0$, rewrite it as $ f^{(n)}(z) = \sum_{j=0}^n \binom{n}{j} \langle h^{(j)}(z), g^{(n-j)}(z) \rangle$, and let us check the induction step:
\begin{align*}
\frac{d}{dz} f^{(n)}(z) & =
\sum\limits_{j=0}^n \binom{n}{j} \frac{d}{dz}\langle h^{(j)}(z), g^{(n-j)}(z)\rangle \\
&= \sum\limits_{j=0}^n \binom{n}{j} \left( \langle h^{(j+1)}(z), g^{(n-j)}(z) \rangle + \langle h^{(j+1)}(z), g^{(n-j + 1)}(z) \rangle\right) \\
	& =  \sum\limits_{j=1}^{n+1} \binom{n}{j-1} \langle h^{(j)}(z), g^{(n + 1 -j)}(z) \rangle + \sum\limits_{j=0}^n \binom{n}{j} \langle h^{(j)}(z), g^{(n + 1 -j)}(z) \rangle \\
    & =  \binom{n}{0} \langle h^{(0)}(z), g^{(n + 1)}(z) \rangle + \sum\limits_{j=1}^n \binom{n+1}{j} \langle h^{(j)}(z), g^{(n + 1 -j)}(z) \rangle
     + \binom{n}{n} \langle h^{(n+1)}(z), g^{(0)}(z) \rangle \\
    & =  \sum\limits_{j=0}^{n+1} \binom{n+1}{j} \langle h^{(j)}(z), g^{(n + 1 -j)}(z) \rangle
\end{align*}
The estimate follows from the bound $|f(u)|\leq \|h\|_{\F^\bigstar} \|g\|_\F$ for all $u\in\U$ and Cauchy's estimates, taking into account that for every $r<\dist(z,\partial\U)$ the disc of radius $r$ centered at $z$ lies inside $\U$.
\end{proof}

\begin{proof}[Proof of Proposition \ref{props-duality}]
We begin by showing that, for each $z\in\U$ the map $T_n$ is bounded from $({\F^\bigstar})^{(n)}_z$ to $(X^{(n)}_z)^*$. Put $x=(x_{n-1}, \cdots, x_0)$ and $\xi=(\xi_{n-1}, \cdots, \xi_0)$. Take $g \in \mathscr{G}$ such that $\tau_{(n,0]}g(z)=x$ and
 a corresponding $h \in \mathscr \Sigma$ for $\xi$. Let $f(u) = \langle h(u), g(u)\rangle$. By Lemma \ref{lem-h}, $f$ is bounded and analytic on $\U$, with
$|f(u)| \leq \|h\|_{\F^\bigstar}\|g\|_\G$ for all $u \in \U$ and
$$
\left|T_{(n)}(\xi)(x)\right|=\frac{\left|f^{(n-1)}(z)\right|}{(n-1)!}\leq \frac{ \|h\|_{\F^\bigstar}\|g\|_\G}{\dist(z,\partial\U)^{n-1}}.
$$
Since $h$ and $g$ were arbitrary, we obtain that $T_{(n)}(\xi)$ extends to a continuous functional on $X^{(n)}_z$ that we call again $T_{(n)}(\xi)$, and that $T_{(n)}$ is a bounded map, with $\| T_n : ({\F^\bigstar})^{(n)}_z \longrightarrow (X^{(n)}_z)^*\|\leq \dist(z,\partial\U)^{1-n}$.\medskip

The remainder of the proof is easier. First, for $n,k\geq 1$ and $z\in\U$, the following diagram is commutative:
 \begin{equation}\label{eq-diagram-Tn}
\xymatrixcolsep{3.5pc}  \xymatrix{ 0 \ar[r] & ({\F^\bigstar})^{(n)}_z \ar[r]^{\jmath_{n, n+k}}\ar[d]^{T_n} &  ({\F^\bigstar})^{(n+k)}_z
    \ar[r]^{\varpi_{n+k, k}}\ar[d]^{T_{n+k}} & ({\F^\bigstar})^{(k)}_z\ar[r]\ar[d]^{T_{k}} &0 \\ 0 \ar[r] & (X_z^{(n)})^*
    \ar[r]^{{\pi_{n+k, k}}^*} & (X_z^{(n+k)})^* \ar[r]^{{\imath_{k, n+k}}^*} & (X_z^{(k)})^*\ar[r] &0}
  \end{equation}

At this stage of the proof we cannot guarantee the exactness of the upper row of the preceding diagram: we have not proved that the image of $\jmath_{n, n+k}$ fills the kernel of $\varpi_{n+k, k}$.
 However, we know that $T_1$ is an isomorphism (it is in fact an isometry, by the result of Coifman, Cwikel, Rochberg, Sagher and Weiss mentioned before) and then a diagram chasing argument quickly shows that $T_m$ is an isomorphism for all $m\geq 1$. Indeed let us assume that $T_n$ and $T_k$ are isomorphisms and let us check that then so is $T_{n+k}$. It is clear that $T_{n+k}$ is injective. We show that it is also onto and open. Pick an arbitrary $x^*\in (X_z^{(n+k)})^*$ and let $\xi\in ({\F^\bigstar})^{(n+k)}_z$ be such that ${\varpi_{n+k, k}}(\xi)= T_{k}^{-1}({\imath_{n, n+k}}^*(x^*))$, with
$$
\|\xi\|_{({\F^\bigstar})^{(n+k)}_z} \leq C \| T_{k}^{-1}({\imath_{n, n+k}}^*(x^*)) \|_{({\F^\bigstar})^{(k)}_z}.
$$
for a constant $C$ independent of the choices. Now, $x^*-T_{n+k}(\xi)$ belongs to $\ker {{\imath_{n, n+k}}^*}$ and since the lower row is exact there is $y^*\in X^{(n)}_z$ such that ${{\pi_{n+k, k}}^*}(y^*)=x^*-T_{n+k}(\xi)$.
Letting $\eta= \jmath_{n, n+k}(T_n^{-1}(y^*))$ it is clear that $x^*=T_{n+k}(\xi+\eta)$.
Besides,
\begin{align*}
\|\xi\|&\leq C\|T_k^{-1}\|\:\|\imath_{k,n+k}\|\:\|x^*\|,\\
\|\eta\|&\leq \|\jmath_{n, n+k}\|\:\|T_n^{-1}\|\: \| ({{\pi_{n+k, k}}^*})^{-1}\|\left(1+\|T_{n+k}\|\right)\|x^*\|.\qedhere
\end{align*}
\end{proof}
Once thus has:

\begin{theorem}\label{thm-Tn-is-isomorphism}
For every $n,k\geq 1$ and each $z\in\U$ there is a commutative diagram
  $$\xymatrixcolsep{3.5pc}\xymatrix{ 0 \ar[r] & ({\F^\bigstar})^{(n)}_z \ar[r]^{\jmath_{n, n+k}}\ar[d]^{T_n} &  ({\F^\bigstar})^{(n+k)}_z
    \ar[r]^{\varpi_{n+k, k}}\ar[d]^{T_{n+k}} & ({\F^\bigstar})^{(k)}_z\ar[r]\ar[d]^{T_{k}} &0 \\ 0 \ar[r] & (X_z^{(n)})^*
    \ar[r]^{{\pi_{n+k, k}}^*} & (X_z^{(n+k)})^* \ar[r]^{{\imath_{k, n+k}}^*} & (X_z^{(k)})^*\ar[r] &0}
    $$
in which the vertical arrows are linear homeomorphisms and the rows are exact.
\end{theorem}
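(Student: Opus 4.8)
The plan is to recognise that Theorem~\ref{thm-Tn-is-isomorphism} adds only one genuinely new fact to Proposition~\ref{props-duality}, namely the exactness of the \emph{upper} row of the diagram; this is precisely the point that was flagged as unproved in the course of Proposition~\ref{props-duality} (``we have not proved that the image of $\jmath_{n,n+k}$ fills the kernel of $\varpi_{n+k,k}$''). Everything else is already available: by Proposition~\ref{props-duality} each $T_m\colon({\F^\bigstar})^{(m)}_z\to(X_z^{(m)})^*$ is a linear homeomorphism, and the commutativity of the square diagram was verified in~(\ref{eq-diagram-Tn}) during that proof. So I would only have to establish exactness of the two rows and then read off the statement.

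For the lower row I would simply identify it, up to the identifications made in Proposition~\ref{props-duality} and an evident relabelling of indices, with the Banach-space adjoint of a short exact sequence of type~(\ref{eq:nn+kk}), say the adjoint of $0\to\F^{(k)}_z\xrightarrow{\imath_{k,n+k}}\F^{(n+k)}_z\xrightarrow{\pi_{n+k,n}}\F^{(n)}_z\to0$. The adjoint of a short exact sequence of Banach spaces is again short exact: the adjoint of an isomorphic embedding is a quotient map, the adjoint of a quotient map is an isomorphic embedding, and the annihilator identity $\ker\imath_{k,n+k}^*=(\Ima\imath_{k,n+k})^{\perp}=(\ker\pi_{n+k,n})^{\perp}=\Ima\pi_{n+k,n}^*$ takes care of exactness in the middle. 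This step is standard and requires no computation.

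The only step with any content is the transfer of exactness to the upper row, and it is a three-line diagram chase now that the vertical maps are known isomorphisms. Injectivity of $\jmath_{n,n+k}$ and surjectivity of $\varpi_{n+k,k}$ are already recorded (the latter is part of the lemma showing $\varpi_{n+k,k}$ is an isometric quotient map), and $\Ima\jmath_{n,n+k}\subseteq\ker\varpi_{n+k,k}$ is immediate from the explicit formulas for the two maps. For the reverse inclusion I would take $\xi$ with $\varpi_{n+k,k}(\xi)=0$; commutativity of the right square and injectivity of $T_k$ put $T_{n+k}(\xi)$ in the kernel of the lower right-hand map, which by exactness of the lower row equals the image of the lower left-hand map, so $T_{n+k}(\xi)=\pi_{n+k,n}^{*}(y^{*})$ for some $y^{*}\in(X_z^{(n)})^{*}$; then $\eta:=T_n^{-1}(y^{*})$ satisfies $T_{n+k}(\jmath_{n,n+k}(\eta))=\pi_{n+k,n}^{*}(T_n(\eta))=T_{n+k}(\xi)$ by commutativity of the left square, and injectivity of $T_{n+k}$ forces $\xi=\jmath_{n,n+k}(\eta)$. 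I do not anticipate a real obstacle anywhere: the analytic content was entirely consumed in Proposition~\ref{props-duality} and what remains is bookkeeping, the one mild point of care being to align the indices of the lower row with the correct instance of~(\ref{eq:nn+kk}) so that the adjoint maps literally are the ones drawn in the diagram.
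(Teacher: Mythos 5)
Your proposal is correct and follows essentially the same route as the paper: Theorem~\ref{thm-Tn-is-isomorphism} is stated there as an immediate consequence of Proposition~\ref{props-duality}, whose proof already contains the commutativity of diagram~(\ref{eq-diagram-Tn}), the fact that each $T_m$ is a linear homeomorphism, and the exactness of the dualized lower row. Your diagram chase for exactness of the upper row is exactly the bookkeeping the paper leaves implicit, and it is carried out correctly.
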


\subsection{A useful ``norming'' subspace to work with couples} In this Section we take advantage of a result by Cwikel \cite[Theorem~3.1]{Cw2014} to obtain a quite useful subspace of the dual space of the derived spaces of a couple.

Let $(X_0,X_1)$ be a Banach couple with sum $\Sigma$ and intersection $\Delta$, which is equipped with the norm $x\in X_0\cap X_1\longmapsto \max\big{(} \|x\|_0, \|x\|_1\big{)}$. We assume that $(X_0,X_1)$ is  regular according to Cwikel \cite{Cw2014}, i.e., $\Delta$ is dense in each $X_i$. Then each $X_i^*$ embeds into $\Delta^*$ (not $\Delta^\star$) in such a way that $X_0^*\cap X_1^*=\Sigma^*$.

There is a natural bilinear pairing $B:\CoX\times \C(X_0^*, X_1^*)\To A(\St)$ defined by
$$
B(h,g)(z) =\langle h(z), g(z)\rangle$$
for $g\in\CooX, h\in \C(X_0^*, X_1^*)$; recall that such a $g$ takes values in $\Delta$ and that $\CooX$ is dense in $\CoX$ so that the previous $B$ can be extended to the completion), where the brackets refer to the duality between $\Delta^*$ and $\Delta$. Now, \emph{mutatis mutandis}, the arguments of the preceding section yield:

\begin{proposition}
\label{prop:norming}
For each $z\in\St$ and each $n\geq 1$, let $T_n:
\C(X_0^*, X_1^*)_z^{(n)}\To \big{(}\CoX_z^{(n)}\big{)}^{*}$ be given by
\begin{equation}\label{eq:Tncouples}
T_n(\xi_{n-1}, \cdots, \xi_0)(x_{n-1}, \cdots, x_0) = \sum_{j=0}^{n-1} \langle \xi_j,x_{n-j-1}\rangle
\end{equation}
for $(\xi_{n-1}, \cdots, \xi_0)\in \C(X_0^*, X_1^*)_z^{(n)} , x_j\in\Delta, 0\leq j <n)$. The operator $T_n$ is bounded, with
\begin{equation}\label{eq:||Tn||couples}
\left\| T_n : \C(X_0^*, X_1^*)_z^{(n)}  \longrightarrow \big{(}\CoX_z^{(n)}\big{)}^{*}\right\|\leq \frac{1}{\dist(z,\partial\St)^{n-1}}.
\end{equation}
Moreover, $T_n$ ``renorms'' $\CoX_z^{(n)}$ in the following sense: there exist constants $c, C>0$ that depend on $z$ and $n$ such that
\begin{equation}\label{eq:renorming}
c\, \|x\|_{\CoX_z^{(n)}}\leq \sup\left\{ \big{|}T_n(\xi)(x)\big{|}: \xi\in \C(X_0^*, X_1^*)_z^{(n)}, \|\xi\|_{\C(X_0^*, X_1^*)_z^{(n)}}\leq 1\right\} \leq C\, \|x\|_{\CoX_z^{(n)}}.
\end{equation}
The constants do not depend on $\xi$ or $x$. In particular, if $X_\zeta$ is reflexive for some $0<\Re(\zeta)<1$, which is always the case if one of the spaces of the couple is reflexive, then $T_n$ is an isomorphism for every $n$ and $z$. The same happens if $X_0$ or $X_1$ is an Asplund space (equivalently, the dual has the Radon-Nikod\'ym property).
\end{proposition}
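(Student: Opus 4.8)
The plan is to transcribe the machinery of Section~\ref{sec:W}, replacing the algebraic dual by the topological one. Three things must be checked: the bound (\ref{eq:||Tn||couples}), the two–sided estimate (\ref{eq:renorming}), and the isomorphism assertion; only the first is a genuine computation. For it I would prove the couple analogue of Lemma~\ref{lem-h}: given $h\in\C(X_0^*,X_1^*)$ and $g\in\CooX$, the scalar function $f(u)=\langle h(u),g(u)\rangle$ is bounded and analytic on $\St$, with $|f(j+it)|\le\|h\|\,\|g\|$ on the lines $j=0,1$, hence $|f|\le\|h\|\,\|g\|$ throughout $\St$ (pass to the disc through $\varphi$ exactly as in the proof of Lemma~\ref{lem-h}, or use a maximum-modulus argument for the strip), and moreover
\[
\frac{f^{(m)}(z)}{m!}=\sum_{j=0}^{m}\left\langle\frac{h^{(j)}(z)}{j!},\frac{g^{(m-j)}(z)}{(m-j)!}\right\rangle
\]
by the same induction, so that Cauchy's estimates give $|f^{(m)}(z)|\le m!\,\|h\|\,\|g\|\,\dist(z,\partial\St)^{-m}$. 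Since $T_n(\tau_{(n,0]}h(z))(\tau_{(n,0]}g(z))=f^{(n-1)}(z)/(n-1)!$ and $\CooX$ is dense in $\CoX$ (so the two induce the same derived spaces with the same norms), taking infima over representing functions yields (\ref{eq:||Tn||couples}). A quicker route is to note that $\C(X_0^*,X_1^*)$ embeds contractively into the space $\F^\bigstar$ of Section~\ref{sec:W} attached to the couple, so that $T_n$ is merely the restriction of the homeomorphism of Theorem~\ref{thm-Tn-is-isomorphism} and inherits the bound (\ref{eq:||Tn||}).

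For (\ref{eq:renorming}) the right-hand inequality, with $C=\dist(z,\partial\St)^{1-n}$, is just the boundedness above. The left-hand inequality is the point where Cwikel's Theorem~3.1 of \cite{Cw2014} is decisive: for $n=1$ it asserts that the image under $T_1$ of the unit ball of $\C(X_0^*,X_1^*)_z=[X_0^*,X_1^*]_z$ is norming for $\CoX_z=[X_0,X_1]_z$ (for $n=1$ this is in fact the classical statement that $[X_0^*,X_1^*]_z$ is $1$-norming for $[X_0,X_1]_z$). I would then propagate it to all $n$ by induction along the intertwining sequences: writing $n=i+j$ with $i,j\ge 1$, the sequence (\ref{eq:nn+kk}) for the dual couple $(X_0^*,X_1^*)$ and the dual of (\ref{eq:nn+kk}) for $\CoX$ fit into a commutative diagram with vertical maps $T_i$, $T_{i+j}$, $T_j$ — the exact analogue of (\ref{eq-diagram-Tn}) — and a diagram chase run as in the proof of Proposition~\ref{props-duality}, but keeping track of the quasinorm constants of the twisted–sum decomposition, shows that if $T_i$ and $T_j$ are norming then so is $T_{i+j}$, with a constant depending only on $z$, $i$ and $j$. (If Cwikel's statement already covers arbitrary $n$, this induction is superfluous and the inequality is immediate.)

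Finally, for the isomorphism assertion I would use that if $X_\zeta=[X_0,X_1]_\zeta$ is reflexive for a single $\zeta$ with $0<\Re(\zeta)<1$ — in particular whenever $X_0$ or $X_1$ is reflexive, or more generally Asplund — then, by the standard stability of reflexivity under the complex method, $X_z$ is reflexive for every $z$ in the strip, and the classical duality theorem for the complex method gives $(\CoX_z)^*=[X_0^*,X_1^*]_z=\C(X_0^*,X_1^*)_z$ isometrically; that is, $T_1$ is onto, hence an isomorphism by the open mapping theorem. Feeding this into the commutative diagram above and running the diagram chase of Proposition~\ref{props-duality} (first with $i=j=1$, then $i=2$, $j=1$, and so on) shows that $T_n$ is an isomorphism for every $n$ and $z$; in the Asplund case one only replaces the reflexive duality theorem by the fact that $(\CoX_z)^*=[X_0^*,X_1^*]_z$ as soon as one of $X_0,X_1$ has a dual with the Radon--Nikod\'ym property. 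The hard part, I expect, is the left-hand inequality of (\ref{eq:renorming}): for $n=1$ it is the substance of Cwikel's theorem, and promoting it to higher $n$ forces the somewhat delicate bookkeeping of indices and quasinorm constants in the diagram chase through the intertwining exact sequences — everything else being a routine transcription of Section~\ref{sec:W}.
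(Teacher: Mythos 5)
Your proposal is correct and follows essentially the same route as the paper's own (sketched) proof: the bound (\ref{eq:||Tn||couples}) via the couple analogue of Lemma~\ref{lem-h} and Cauchy's estimates, exactly as in Proposition~\ref{props-duality}; the left inequality in (\ref{eq:renorming}) from Cwikel's $n=1$ equality $c=C=1$ propagated by the diagram-chase induction through the intertwining sequences; and the final assertion from $T_1$ being a surjective isometry in the reflexive/Asplund case (the paper simply cites \cite[Theorem 4.4]{kalt-mon} for this, where you invoke the classical duality theorem and stability of reflexivity) pushed through the same commutative diagram. No substantive difference worth recording.
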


\begin{proof}[Sketch of the Proof]
The proof of the first part runs parallel to that of Proposition~\ref{props-duality} and is left to the reader. The ``moreover'' part
follows from Cwikel's result mentioned earlier (namely, that when $n=1$ the inequalities
in (\ref{eq:renorming}) are actually equalities with $c=C=1$) by an easy induction argument. Consider the commutative diagram
\begin{equation*}
\xymatrixcolsep{3pc}  \xymatrix{ 0 \ar[r] & \C(X_0^*, X_1^*)_z^{(n)} \ar[r] \ar[d]^{T_n} &  \C(X_0^*, X_1^*)_z^{(n+k)}
    \ar[r] \ar[d]^{T_{n+k}} & \C(X_0^*, X_1^*)_z^{(k)}\ar[r]\ar[d]^{T_{k}} &0 \\ 0 \ar[r] & (\CX_z^{(n)})^*
    \ar[r]^{{\pi_{n+k, n}}^*} & (\CX_z^{(n+k)})^* \ar[r]^{{\imath_{k, n+k}}^*} & (\CX_z^{(k)})^*\ar[r] &0}
  \end{equation*}
and recall our convention about unlabelled arrows. Assuming that $T_n$ and $T_k$ are ``renormings'', one quickly obtains chasing the diagram $T_{n+k}$ renorms $\CX_z^{(n+k)}$. The last assertion in the statement follows from being $T_1$ a surjective isometry \cite[Theorem 4.4]{kalt-mon}, as it was explained during the proof of Proposition \ref{props-duality}, and thus, by Diagram \ref{eq-diagram-Tn}, the same occurs to all $T_n$.\end{proof}

\begin{corollary}
For every $z\in\St$ and every $n\geq 1$ the dual of $\C(\ell_\infty,\ell_1)_z^{(n)}$ is isomorphic to $\C(\ell_\infty,\ell_1)_{1-z}^{(n)}$.
\end{corollary}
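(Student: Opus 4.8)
The plan is to reduce the statement to the abstract duality result Proposition~\ref{prop:norming} by two elementary maneuvers: first replacing the non-regular couple $(\ell_\infty,\ell_1)$ by the regular couple $(c_0,\ell_1)$, and then exploiting the reflection symmetry $w\mapsto 1-w$ of the strip. For the first reduction I would argue that $\mathscr{C}(\ell_\infty,\ell_1)_z^{(n)}=\mathscr{C}(c_0,\ell_1)_z^{(n)}$, up to equivalent norms, for every $z\in\St$ and every $n\geq 1$: viewing $\mathscr{C}(c_0,\ell_1)$ as a space of $\ell_\infty$-valued functions through the isometric embedding $c_0\hookrightarrow\ell_\infty$, it is contractively contained in $\mathscr{C}(\ell_\infty,\ell_1)$, and at the level of first Rochberg spaces both yield the Calder\'on space $\ell_p$ with $p=1/\Re z$, isometrically, by Calder\'on's product formula. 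Lemma~\ref{lem:Gzn=Fzn} then lifts the coincidence $\mathscr{C}(\ell_\infty,\ell_1)_z=\mathscr{C}(c_0,\ell_1)_z$ to all $n$.

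Next I would apply Proposition~\ref{prop:norming} to the couple $(c_0,\ell_1)$. It is regular because $c_0\cap\ell_1=\ell_1$ is dense in both $c_0$ and $\ell_1$, and moreover $c_0$ is Asplund (its dual $\ell_1$ has the Radon--Nikod\'ym property) --- alternatively, the interpolation spaces $[c_0,\ell_1]_\theta=\ell_{1/\theta}$ are reflexive for $0<\theta<1$. Hence $T_n$ is an isomorphism $\mathscr{C}(c_0^*,\ell_1^*)_z^{(n)}\to(\mathscr{C}_0(c_0,\ell_1)_z^{(n)})^*$. Since $c_0^*=\ell_1$, $\ell_1^*=\ell_\infty$, and $\mathscr{C}_0(c_0,\ell_1)_z^{(n)}=\mathscr{C}(c_0,\ell_1)_z^{(n)}$ (the remark after Lemma~\ref{lem:Gzn=Fzn}), this together with the previous paragraph gives $(\mathscr{C}(\ell_\infty,\ell_1)_z^{(n)})^*\cong\mathscr{C}(\ell_1,\ell_\infty)_z^{(n)}$.

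Finally I would invoke the reflection: $f\mapsto f(1-\,\cdot\,)$ interchanges the two boundary lines of $\St$ and hence is a surjective isometry $\mathscr{C}(\ell_\infty,\ell_1)\to\mathscr{C}(\ell_1,\ell_\infty)$; since $(f(1-\,\cdot\,))^{(i)}(z)=(-1)^if^{(i)}(1-z)$, the diagonal sign change $(w_{n-1},\dots,w_1,w_0)\mapsto((-1)^{n-1}w_{n-1},\dots,-w_1,w_0)$ is a surjective isometry $\mathscr{C}(\ell_\infty,\ell_1)_{1-z}^{(n)}\to\mathscr{C}(\ell_1,\ell_\infty)_z^{(n)}$. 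Composing with the previous step yields $(\mathscr{C}(\ell_\infty,\ell_1)_z^{(n)})^*\cong\mathscr{C}(\ell_\infty,\ell_1)_{1-z}^{(n)}$, as desired. The only genuine difficulty here is that Proposition~\ref{prop:norming} presupposes a regular couple, which $(\ell_\infty,\ell_1)$ is not; so the real content is the first reduction, where Lemma~\ref{lem:Gzn=Fzn} is precisely the instrument that certifies that passing to $(c_0,\ell_1)$ costs nothing. The remaining steps are bookkeeping.
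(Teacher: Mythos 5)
Your proof is correct and follows essentially the same route the paper intends: the paper itself performs the reduction from $(\ell_\infty,\ell_1)$ to the regular couple $(c_0,\ell_1)$ via Lemma~\ref{lem:Gzn=Fzn} (later in the same section), applies Proposition~\ref{prop:norming} in its isomorphism case, and uses exactly the reflection $f\mapsto f(1-\cdot)$ with the alternating-sign isometry described right after the corollary. So this is the paper's argument, just written out explicitly.
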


If we specialize to $z={1\over 2}$ we obtain that each of the spaces $\mathscr Z_n$ is isomorphic to its dual (Thus, for instance, Theorem~\ref{notsub} can be dualized replacing ``embeds in'' by ``is a quotient of'', and so on). However the pairing witnessing it is not
$$
\langle (y_{n-1},\dots, y_0), (x_{n-1},\dots, x_0) \rangle =\sum_{i+j=n-1} \langle y_i, x_j \rangle
$$
because this pairing induces an isomorphism between $\C(\ell_\infty,\ell_1)_{1/2}^{(n)}=\mathscr Z_n$ and the dual of $\C(\ell_1, \ell_\infty)_{1/2}^{(n)}$ which is isometric, but not equal, to $\mathscr Z_n$. In general, an isometry between
$\C(X_0,X_1)_{1/2}^{(n)}$ and $\C(X_1,X_0)_{1/2}^{(n)}$ can be obtained as follows: pick $x=(x_{n-1},\dots,x_0)$ in $\C(X_0,X_1)_{1/2}^{(n)}$ and then $f\in \C(X_0,X_1)$ such that $x=\tau_{(n,0]}f({1\over 2})$ and $\|f\|\leq \|x\|+\varepsilon$. Clearly $g(z)=f(1-z)$ has the same norm in $\C(X_1,X_0)$ as $f$ in $\C(X_0,X_1)$. Hence  $\tau_{(n,0]}g({1\over 2})$ belongs to  $\C(X_1,X_0)_{1/2}^{(n)}$ and has the same norm as $x$. Clearly
$$
\tau_{(n,0]}g(\tfrac{1}{2})=\big( (-1)^{n-1}x_{n-1},\dots,-x_1, x_0	\big).
$$
The inexorable conclusion is that the pairing that defines the isomorphism between $\mathscr Z_n$ and its dual is
$$
\langle (y_{n-1},\dots, y_0), (x_{n-1},\dots, x_0) \rangle =\sum_{i+j=n-1} (-1)^i\langle y_i, x_j \rangle
$$
If we denote by $u_n:\mathscr Z_n\To \mathscr Z_n^*$ the corresponding (noncanonical) isomorphism then the family $(u_n)_{n\geq 1}$ is ``almost'' compatible with the natural exact sequences:


\begin{corollary}
With the same notations as before, for every $k,n\geq 1$ the following diagram is commutative
$$\begin{CD} 0@>>> \mathscr Z_n @>\imath_{n,n+k}>> \mathscr Z_{n+k} @>\pi_{n+k, k}>> \mathscr Z_k @>>> 0\\
&&@V{(-1)^ku_n}VV @VVu_{n+k}V @VV{u_k}V\\
0@>>> \mathscr Z_n^* @> \pi_{n+k, k}^*>> \mathscr Z_{k+n}^* @>\imath_{n,n+k}^*>> \mathscr Z_k^* @>>> 0\end{CD}$$
\end{corollary}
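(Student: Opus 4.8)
The plan is to reduce the commutativity of the two squares to elementary identities between the defining bilinear pairings and then to check those by index bookkeeping. First I would make all the arrows explicit. By the preceding corollary and the discussion following it, each $u_m\colon\mathscr Z_m\to\mathscr Z_m^*$ is a bounded linear isomorphism which, on the dense subspace $\Delta^m\subset\mathscr Z_m$, is given by $u_m(y)(x)=\sum_{i+j=m-1}(-1)^i\langle y_i,x_j\rangle=\sum_{i=0}^{m-1}(-1)^i\langle y_i,x_{m-1-i}\rangle$, the brackets denoting the $\ell_\infty$--$\ell_1$ pairing. The top-row maps are $\imath_{n,n+k}(y_{n-1},\dots,y_0)=(y_{n-1},\dots,y_0,\underbrace{0,\dots,0}_{k})$ and $\pi_{n+k,k}(v_{n+k-1},\dots,v_0)=(v_{k-1},\dots,v_0)$; the bottom row is the dual of the companion exact sequence $0\to\mathscr Z_k\to\mathscr Z_{n+k}\to\mathscr Z_n\to 0$ extracted from (\ref{eq:nn+kk}) (with maps $\imath_{k,n+k}$ and $\pi_{n+k,n}$), so its arrows are the transposes $\pi_{n+k,n}^{*}\colon\mathscr Z_n^*\to\mathscr Z_{n+k}^*$ and $\imath_{k,n+k}^{*}\colon\mathscr Z_{n+k}^*\to\mathscr Z_k^*$, each acting by precomposition, and the row is exact. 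Since all arrows in the diagram are bounded operators, it suffices to verify each square after evaluating on vectors of $\Delta^n$, resp.\ $\Delta^{n+k}$, where the formula for $u_m$ is literal.

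For the left square, take $y=(y_{n-1},\dots,y_0)\in\Delta^n$ and evaluate at $v\in\Delta^{n+k}$. The nonzero block of $\imath_{n,n+k}(y)$ occupies positions $k,\dots,n+k-1$, so in the pairing defining $u_{n+k}$ the coefficient attached to $y_i$ is $(-1)^{i+k}$ and its partner is $v_{(n+k-1)-(i+k)}=v_{n-1-i}$; hence $\big(u_{n+k}\circ\imath_{n,n+k}\big)(y)(v)=(-1)^k\sum_{i=0}^{n-1}(-1)^i\langle y_i,v_{n-1-i}\rangle$. On the other hand $\pi_{n+k,n}$ simply forgets the top $k$ coordinates, so $\big(\pi_{n+k,n}^{*}\circ(-1)^ku_n\big)(y)(v)=(-1)^k\,u_n(y)(v_{n-1},\dots,v_0)=(-1)^k\sum_{i=0}^{n-1}(-1)^i\langle y_i,v_{n-1-i}\rangle$, the same expression. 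For the right square, take $v\in\Delta^{n+k}$ and evaluate at $x=(x_{k-1},\dots,x_0)\in\Delta^{k}$. Since $\imath_{k,n+k}$ appends $n$ zeros on the right, precomposing $u_{n+k}(v)$ with it annihilates every term of index $>k-1$ and turns the partner of $v_l$ into $x_{(n+k-1)-l-n}=x_{k-1-l}$; thus $\big(\imath_{k,n+k}^{*}\circ u_{n+k}\big)(v)(x)=\sum_{l=0}^{k-1}(-1)^l\langle v_l,x_{k-1-l}\rangle=u_k(v_{k-1},\dots,v_0)(x)=\big(u_k\circ\pi_{n+k,k}\big)(v)(x)$. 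Both squares therefore commute.

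I do not expect a genuine obstacle: exactness of the bottom row and the isomorphism property of each $u_m$ are already in hand, and the maps being continuous they need only be compared on the dense subspaces. The single point deserving attention is the sign accounting in the left square: one must observe that the block shift by $k$ built into $\imath_{n,n+k}$ multiplies every sign $(-1)^i$ by the \emph{same} factor $(-1)^k$ --- which is precisely what forces the left vertical arrow to be $(-1)^ku_n$ rather than $u_n$ --- and that the coordinate of $v$ surviving as the partner of $y_i$ is $v_{n-1-i}$ irrespective of $k$, so that the composition genuinely factors through $\pi_{n+k,n}^{*}$.
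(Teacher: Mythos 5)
Your proof is correct and follows the route the paper implicitly intends: verify the two squares on the dense subspaces $\Delta^m$ using the explicit signed pairing $u_m(y)(x)=\sum_{i+j=m-1}(-1)^i\langle y_i,x_j\rangle$, with the block shift by $k$ in $\imath_{n,n+k}$ producing exactly the factor $(-1)^k$ on the left vertical arrow. You also correctly read the bottom row as the dual of the companion sequence $0\to\mathscr Z_k\to\mathscr Z_{n+k}\to\mathscr Z_n\to 0$ (the labels in the statement are off), which is the only reading that type-checks.
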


The continuity of the operators $T_n$ of Proposition~\ref{prop:norming} provides lower bounds for the norm of an element of the form $(0,\dots,0,x)$ in $X^{(n)}_c=\CX^{(n)}_c$. Note that for $0<c<1$ we have $\dist(c,\partial\St)=\min(c,1-c)$.
Now, if $h\in\Cdual$, and $x\in\Delta$, then
\begin{equation}\label{eq:lowerbound}
\big{|}\langle \tau_{n-1}h(c), x \rangle\big{|} =  \big{|}\big(T_n  \tau_{(n,0]}h(c)\big)(0,\dots,0,x)\big{|} \leq \frac{\|h\|_{\Cdual}\, \|(0,\dots,0,x)\|_{X^{(n)}_c}}{\min(c,1-c)^{n-1}}.
\end{equation}
Let us consider again the case where $X_0=\ell_\infty$ and $X_1=\ell_1$ and estimate the norm of $(0,\dots,0,s_N)$ in the space $\mathscr Z^{(n)}_c=\mathscr C(\ell_\infty,\ell_1)^{(n)}_c$, for $0<c<1$. Note that $\mathscr Z^{(1)}_c=\ell_p$ with $p=1/c$ and so $\|s_N\|_{\mathscr Z^{(1)}_c}=N^{1/p}=N^c$.
If we interpret $\mathscr C(c_0,\ell_1)$ as a subset of  $\mathscr C(\ell_\infty,\ell_1)$ in the obvious way, $\mathscr C(\ell_\infty,\ell_1)_c= \mathscr C(c_0,\ell_1)_c$, with the same norm: just think of the finitely supported sequences. It follows from Lemma~\ref{lem:Gzn=Fzn} that $\mathscr C(\ell_\infty,\ell_1)^{(n)}_c= \mathscr C(c_0,\ell_1)^{(n)}_c$ for all $n\geq 1$ and $0<c<1$, still with the same norm.
Since $(c_0,\ell_1)$ is a regular couple we can go to Proposition~\ref{prop:norming} and then compute the extremals in $\mathscr C(\ell_1, \ell_\infty)$. Note that
$\mathscr C(\ell_1, \ell_\infty)_c=\ell_q$, where $q$ is the conjugate exponent of $p$ and that if $x$ is positive and normalized in $\ell_q$, then the function $z\mapsto x^{q(1-z)}$ is normalized in $\mathscr C(\ell_1, \ell_\infty)$ and assumes the value $x$ at $z=c$. It follows that for any $x\in\ell_q$  the function
$$
h(z)= x\left(\frac{|x|}{\|x\|_q}	\right)^{-q(z-c)}
= x\sum_{n\geq 0}\frac{(-q)^n}{n!}\log^n\left(\frac{|x|}{\|x\|_q} \right) (z-c)^n
$$
is an extremal for $x$ in $\mathscr C(\ell_1, \ell_\infty)$, with
$$
\tau_n h(c)=  \frac{(-q)^n}{n!}\, x\, \log^n\left(\frac{|x|}{\|x\|_q} \right).
$$
Letting $x=s_N$ in $\ell_p$ and taking $h$ as the corresponding extremal for $s_N$ in $\mathscr C(\ell_1, \ell_\infty)$ so that $h(c)=s_N$, with $\|h\|_{\mathscr C(\ell_1, \ell_\infty)}=N^{1/q}$ and applying (\ref{eq:lowerbound}) one obtains
$$
\frac{N\log^{n-1} N}{(n-1)!} \leq
\frac{ N^{1/q} \|(0,\dots,0,s_N)\|_{X^{(n)}_c}}{\min(c,1-c)^{n-1}},
$$
hence (compare with the proof of Lemma~\ref{lem:an})
$$
\|(0,\dots,0,s_N)\|_{X^{(n)}_c}\geq   \frac{\min(c,1-c)^{n-1}}{(n-1)!}
{N^{1/p}\log^{n-1} N} .
$$

\section{Analytic families of Rochberg spaces and interpolation}\label{secondaxis}\label{sec:disc}

This section develops the central topic of the paper and it is where acceptable spaces are required and admissible spaces do not suffice. The domain $\U$ on which an acceptable space of analytic functions is based plays an important role here. The simplest domains are: the unit strip $\St$, where classical interpolation for couples occurs, and the unit disk $\D$, where classical interpolation for families occur. Thus, to motivate the problem let us consider first:

\subsection{The case of couples}\label{sec:couples} The following reiteration-like result is so natural that we can hardly believe it has not been explicitly stated elsewhere.

\begin{proposition}\label{prop:couples}
Let $(X_0,X_1)$ be a regular compatible couple of Banach spaces on the strip $\mathbb S$, with sum $\Sigma$, intersection $\Delta$ and $0<a<b<1$. For every $n\geq 1$ the Rochberg spaces $X^{(n)}_a$ and $X^{(n)}_b$ form a compatible couple on the strip $\St_{a,b}$ as subspaces of $\Sigma^n$ and, for every $a<c <b$, the formal inclusion $X^{(n)}_c\To [X^{(n)}_a, X^{(n)}_b]_c$ is an isomorphic embedding. If, in addition, $\Delta^n$ is dense in $X^{(n)}_a\cap X^{(n)}_b$, which is always the case when $X_1$ contains $X_0$, then $[X^{(n)}_a, X^{(n)}_b]_c= X^{(n)}_c$, with equivalent norms.
\end{proposition}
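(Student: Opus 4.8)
The plan is to treat the three assertions in order. The first is routine: for $\F=\CX$ on $\St$ the derivative--evaluation maps $\delta^{(i)}_z:\F\to\Sigma$ are bounded, and by vertical translation invariance of $\St$ their norms depend only on $\Re z$ and are bounded uniformly for $\Re z\in[a,b]$; hence $X^{(n)}_a$ and $X^{(n)}_b$ embed continuously into $\Sigma^n$ and form a compatible couple over $\St_{a,b}$ with containing space $\Sigma^n$. For the rest, write $i:X^{(n)}_c\to[X^{(n)}_a,X^{(n)}_b]_c$ for the formal (coordinatewise identity) inclusion; the plan is to show first that $\|i\|\le1$ and then, by a duality argument, that $i$ is bounded below.

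\emph{The contractive inclusion.} Fix $w\in X^{(n)}_c$ and $\varepsilon>0$. Using $\CX^{(n)}_c=\CooX^{(n)}_c$ with equal norms (the fact recalled after Lemma \ref{lem:Gzn=Fzn}), choose $f\in\CooX$ with $\tau_{(n,0]}f(c)=w$ and $\|f\|_\C\le(1+\varepsilon)\|w\|_{X^{(n)}_c}$. Since $f$ extends to an entire $\Delta$-valued function, $F(z):=\tau_{(n,0]}f(z)$ is analytic on $\overline{\St_{a,b}}$ and bounded into $\Sigma^n$ (Cauchy's estimates, the closed substrip sitting well inside $\St$), with $F(c)=w$. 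By vertical translation invariance $F(a+it)=\tau_{(n,0]}\big(f(\cdot+it)\big)(a)\in X^{(n)}_a$ with norm $\le\|f(\cdot+it)\|_\C=\|f\|_\C$, and $t\mapsto F(a+it)$ is continuous into $X^{(n)}_a$ because $\|f(\cdot+it)-f(\cdot+it_0)\|_\C\to0$ for $f\in\CooX$; likewise on $\Re z=b$. The Poisson representation on $\St_{a,b}$, splitting the boundary integral over its two components, then shows that $F$ takes values in $X^{(n)}_a+X^{(n)}_b$, is continuous into that sum, and satisfies $\|F\|_{\C(X^{(n)}_a,X^{(n)}_b)}\le\max\big(\sup_t\|F(a+it)\|_{X^{(n)}_a},\sup_t\|F(b+it)\|_{X^{(n)}_b}\big)\le\|f\|_\C$. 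Hence $\|w\|_{[X^{(n)}_a,X^{(n)}_b]_c}\le\|f\|_\C\le(1+\varepsilon)\|w\|_{X^{(n)}_c}$, and $\varepsilon\to0$ gives $\|i\|\le1$.

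\emph{The reverse inequality.} The naive approach---interpolating the entwining exact sequences $0\to X^{(n-1)}_z\to X^{(n)}_z\to X_z\to0$ and inducting on $n$---breaks down, since complex interpolation need not preserve exactness; this is precisely the derived-space phenomenon studied in the rest of the paper, so I would instead pass to duals. By Proposition \ref{props-duality} the pairing $T_n$ (formula (\ref{eq:Tn})) identifies $(X^{(n)}_z)^*$ homeomorphically with $(\F^\bigstar)^{(n)}_z$ for every $z$, and since for different base points these maps are given by the same formula they constitute a morphism of Banach couples that is an isomorphism on each of the endpoints $a,b$; interpolating, $[(X^{(n)}_a)^*,(X^{(n)}_b)^*]_c\cong[(\F^\bigstar)^{(n)}_a,(\F^\bigstar)^{(n)}_b]_c$, and $[(X^{(n)}_a)^*,(X^{(n)}_b)^*]_c$ maps canonically and contractively into $\big([X^{(n)}_a,X^{(n)}_b]_c\big)^*$. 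Running the construction of the previous step with $\F^\bigstar$ in place of $\CX$ produces a contraction $(\F^\bigstar)^{(n)}_c\to[(\F^\bigstar)^{(n)}_a,(\F^\bigstar)^{(n)}_b]_c$; composing these maps yields a bounded $s:(X^{(n)}_c)^*\cong(\F^\bigstar)^{(n)}_c\to\big([X^{(n)}_a,X^{(n)}_b]_c\big)^*$, and a direct computation on the dense set $\Delta^n$---where every identification in sight is implemented by (\ref{eq:Tn})---shows $i^*\circ s=\mathrm{id}$ on $(X^{(n)}_c)^*$. Thus $i^*$ is onto, $i$ is bounded below, and the formal inclusion is an isomorphic embedding.

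\emph{Equality under the density hypothesis.} Assume $\Delta^n$ is dense in $X^{(n)}_a\cap X^{(n)}_b$. Since $\Delta^n$ is dense in each $X^{(n)}_z$ (a consequence of $\G$ being dense in $\F$, cf.\ the start of Section \ref{sec:duality}), and $X^{(n)}_a\cap X^{(n)}_b$ is dense in $[X^{(n)}_a,X^{(n)}_b]_c$ as in any complex interpolation couple, $\Delta^n$ is dense in $[X^{(n)}_a,X^{(n)}_b]_c$. As $\Delta^n\subset X^{(n)}_c$ (write any array in $\Delta^n$ as $\tau_{(n,0]}f(c)$ with $f\in\CooX$ obtained by solving a triangular system for the Taylor coefficients) and $i$ is the identity on $\Sigma^n$, the map $i$ has dense range; combined with being bounded below, $i$ is an isomorphism onto $[X^{(n)}_a,X^{(n)}_b]_c$. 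Finally, when $X_0\subset X_1$ the dilation $g(z)=f\big(\tfrac{a}{b}z\big)$ carries $\CooX$ into $\CX$ (on the line $\Re z=1$ it produces values lying in an interpolation space $\CX_{a/b}$ with $0<a/b<1$, hence in $X_1$) and sends $\tau_{(n,0]}f(a)$ to a nonzero diagonal rescaling of itself, so $X^{(n)}_a\subset X^{(n)}_b$ and $X^{(n)}_a\cap X^{(n)}_b=X^{(n)}_a$, and the density hypothesis is automatic.

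The step I expect to be the main obstacle is the reverse inequality: it cannot be read off from interpolating the exact sequences, and the duality route needs both the unconditional identification of $(X^{(n)}_z)^*$ with the Rochberg space $(\F^\bigstar)^{(n)}_z$ from Section \ref{sec:duality} and the bookkeeping showing the splitting $s$ is assembled from compatible copies of the single pairing (\ref{eq:Tn}).
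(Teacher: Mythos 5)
Your first and third steps are essentially the paper's: the contractive inclusion $X^{(n)}_c\To[X^{(n)}_a,X^{(n)}_b]_c$ is obtained there by the same restriction operator $Rf=\tau_{(n,0]}f$ (checked first on the dense class (\ref{petunin}), then by density), and the density argument for the final equality is identical. The problem is the reverse inequality. Your overall architecture --- dualize via the pairing $T_n$ and transfer the estimate to the couple $\Cdual^{(n)}_a$, $\Cdual^{(n)}_b$ --- is also the paper's, but as written your deduction has a genuine gap: you need $T_n$ to be a surjective isomorphism of $(\F^\bigstar)^{(n)}_z$ onto $(X^{(n)}_z)^*$ at $z=a,b,c$, both to define $s$ on all of $(X^{(n)}_c)^*$ and to identify $[(X^{(n)}_a)^*,(X^{(n)}_b)^*]_c$ with $[(\F^\bigstar)^{(n)}_a,(\F^\bigstar)^{(n)}_b]_c$. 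Proposition \ref{props-duality} provides that identification for admissible interpolation families; for a general regular couple the paper proves only (Proposition \ref{prop:norming}) that $T_n$ is bounded with \emph{norming} range, and surjectivity holds only under extra hypotheses (a reflexive $X_\zeta$, or an Asplund endpoint). Without it, $s$ is defined only on $T_n[(\F^\bigstar)^{(n)}_c]$ and you cannot conclude that $i^*$ is onto --- which you yourself flag as the ``unconditional identification'' you would need.

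The gap is repairable from your own ingredients, and the repair is what the paper does: by the renorming inequality (\ref{eq:renorming}) it suffices to show $|T_n\xi(x)|\leq K\,\|\xi\|_{\Cdual^{(n)}_c}\,\|x\|_{[X^{(n)}_a,X^{(n)}_b]_c}$ for $x\in\Delta^n$, and no surjectivity enters. Where you would obtain this by composing (a) the contractive-inclusion construction for the dual couple, (b) interpolation of the operator $T_n$ between its endpoint bounds, and (c) the one-sided Calder\'on duality $[A_0^*,A_1^*]_c\hookrightarrow([A_0,A_1]_c)^*$, the paper does it in one stroke: it takes near-extremals $g$ for $x$ in $\mathscr C(X^{(n)}_a,X^{(n)}_b)$ and $h$ for $\xi$ in $\Cdual$ of the form (\ref{petunin}) (so that all functions in sight are bounded), forms $f(z)=T_n(\tau_{(n,0]}h(z))(g(z))$, and applies the maximum principle on $\St_{a,b}$ together with the uniform bound $\|T_n\|\leq\dist(z,\partial\St)^{1-n}$ on the boundary lines. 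Your abstract packaging of (a)--(c) is legitimate, but you must phrase the conclusion through the norming property rather than through surjectivity of $i^*$, and you should check that the boundary data fed into (a) are admissible for the dual couple, which need not be regular.
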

\begin{proof} We first remark that in the case of couples we may assume that the norm of $\Sigma$ is majorized by those of $X_0$ and $X_1$. Thus, integrating on  large rectangular contours and using Cauchy integral formul\ae\, one gets, for $0<\theta <1$, that
$$\|\delta^{(n)}_\theta:\CoX\To \Sigma\|\leq \frac{n!}{\min(|\theta|,|1-\theta|)^n}.
$$
Thus, if $x=(x_{n-1},\dots,x_0)$ belongs to $X^{(n)}_{\theta}$, and $f\in\mathscr C(X_0,X_1)$ is such that $x=\tau_{(n,0]}f(\theta)$,
then
$$
\max_{0\leq i<n}\|x_i\|_\Sigma\leq \frac{\|f\|_\mathscr C}{\min(|\theta|, |1-\theta|)^n},
$$
hence $\Sigma^n$ contains both $X^{(n)}_{a}$ and $X^{(n)}_{b}$, the inclusions are continuous and $\big{(}X^{(n)}_{a}, X^{(n)}_{b}\big{)}$ is a compatible couple ready for interpolation on the strip $\St_{a,b}$. From now on, we write $Y_a=X^{(n)}_{a}$  and $Y_b=X^{(n)}_{b}$. Notice that at the moment we do not know whether $Y_c=X^{(n)}_{c}$, which is the conclusion of the Theorem. We end this preparation noticing that, according to our general notations,
$$
X^{(n)}_\eta=\CX_\eta^{(n)}=\CoX_\eta^{(n)}\quad\quad \text{and} \quad\quad Y_c= [Y_a, Y_b]_c=\mathscr C(Y_a, Y_b)_c= \mathscr C_0(Y_a, Y_b)_c.
$$
Let us see that $X^{(n)}_c\subset Y_c$ with contractive inclusion, which is the easy part. Given $f\in\CX$ we define an analytic function $R(f): \St_{a,b} \rightarrow \Sigma^n$ by $Rf(z)=\tau_{(n,0]}f(z)$. We claim that $R$ defines a bounded operator from $\CoX$ to $\mathscr C(Y_a, Y_b)$. Clearly, if $f$ is a simple function with values in $\Delta$ then $Rf\in \mathscr C(Y_a, Y_b)$ and $\|Rf\|_{\mathscr C(Y_a, Y_b)}\leq \|f\|_{\CX}$. For arbitrary $f\in\CoX$ the claim follows from an obvious density argument. We therefore have a commutative square
$$\xymatrix{ \CoX \ar[d]_{\delta_c\circ\tau_{(n,0]}} \ar[r]^R & \mathscr C(Y_a, Y_b) \ar[d]^{\delta_c}\\
 X_c^{(n)} \ar[r]^{\text{identity}} & Y_c
 }
$$
witnessing that the formal identity is a bounded operator from  $X_c^{(n)}$ to $Y_c$ with norm at most 1. To complete the proof of the first part we must show that there is a constant $C$ such that $\|x\|_{X_c^{(n)}}\leq C\|x\|_{Y_c}$ for $x\in \Delta^n$. We need here the duality results of the preceding Section. Since $T_n$ renorms $X^{(n)}_c$, it suffices to show that there is a constant $K$ such that
$$
|T_n\xi(x)|\leq K\|\xi\|_{\mathscr C(X_0^*,X_1^*)^{(n)}_c} \|x\|_{Y_c}$$
for $x\in\Delta^n, \xi\in \mathscr {\mathscr C}(X_0^*,X_1^*)^{(n)}_c$. Pick $\e>0$ and a function $g:\St_{a,b}\To \Sigma^n$ such that $g(c)=x$ with $\|g\|\leq (1+\e)\|x\|_{Y_c}$. Now, pick $h\in \mathscr C(X_0^*,X_1^*)$ such that $\tau_{(n,0]}h(c)=\xi$, with $\|h\|\leq (1+\e)\|\xi\|$.
Since $X_0^*\cap X_1^*=\Sigma^*$, slightly perturbing $\xi$ if necessary, we may assume that $h$ has the form (\ref{petunin}), with vectors in $\Sigma^*$. Then the components of  $\tau_{(n,0]}h$ are $\Sigma^*$-bounded on $\St_{a,b}$ and since $g$ is $\Sigma^n$-bounded the function
$$
f(z)= T_n(\tau_{(n,0]}h(z))(g(z))
$$
is bounded analytic on $\St_{a,b}$ and $f(c)=T_n\xi(x)$. But, for $z\in\partial\St_{a,b}$ one has
$$
|f(z)|\leq \big{\|}T_n:{\mathscr C}(X_0^*, X_1^*)^{(n)}_{z}\To\big{(}X^{(n)}_{z}\big{)}^*\big{\|}\:\big{\|}\tau_{(n,0]}h(z)\big{\|}_{{\mathscr C}(X_0^*, X_1^*)^{(n)}_{z}}
\:\big{\|}g(z)\big{\|}_{X^{(n)}_z}\leq
\frac{(1+\e)^2\big{\|}\xi\big{\|}_{{\mathscr C}(X_0^*, X_1^*)^{(n)}_{c}}\large{\|}x{\|}_{Y_c}}{\min(a,1-b)^{n-1}}
$$
since for $z\in\partial\St_{a,b}$ the space ${X^{(n)}_z}$ agrees with $Y_a$ when $\Re(z)=a$ and with $Y_b$ when $\Re(z)=b$. The result follows from the maximum principle. The second part is clear: if $\Delta^n$ is dense in $X^{(n)}_a\cap X^{(n)}_b$, then it is dense in $[X^{(n)}_a, X^{(n)}_b]_c$ too.\end{proof}

One may wonder if the irritating hypothesis about the density of $\Delta^n$ in $X^{(n)}_a\cap X^{(n)}_b$ is really necessary to get the identity $[X^{(n)}_a, X^{(n)}_b]_c= X^{(n)}_c$. Also, if $(X_0,X_1)$ is a regular Banach couple with intersection $\Delta$ and $0<a<b<1$, is $\Delta^2$ always dense in $X^{(2)}_a\cap X^{(2)}_b$?

The reader may observe that no acceptable space has been used. The question of which admissible space could have been, and could now be, used to obtain the higher order Rochberg spaces admits several answers. The most obvious is to choose:
$$
\mathscr D=\left\{g\in \mathscr C\big{(}X^{(n)}_a, X^{(n)}_b\big{)}: g(z)\in X^{(n)}_z \text{ for } a\leq \Re(z)\leq b\right\}.
$$

One has:

\begin{corollary}\label{cor:looming} With the same notations as above, $\mathscr D$ is an admissible space of analytic functions on the strip $\St_{a,b}$ and for each $z\in \St_{a,b}$, one has $\mathscr D_z=X^{(n)}_z$, with equivalent norms. Besides, if $x\in X^{(n)}_z$ and $f\in \CX$ is  such that $x=\tau_{(n,0]}f(z)$ and $\|f\|_{\CX}\approx \|x\|_{X^{(n)}_c}$, then, if
 $F$ is the restriction of $\tau_{(n,0]}f$ to $\St_{a,b}$, one has $F(z)=x$, and $\|F\|_{\mathscr D}=\|f\|_{\CX}\leq C\|x\|_{\mathscr D_z}$, where $C$ is a constant depending on $z$, but not on $x$.\end{corollary}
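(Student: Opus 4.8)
The plan is to verify the three assertions in turn --- that $\mathscr D$ is complete, that it satisfies conditions (a) and (b) of Definition~\ref{def:ad} with containing space $\Sigma^n$, and that $\mathscr D_z=X^{(n)}_z$ up to equivalent norms --- and to read off the final ``besides'' clause from the construction used for one of the two inclusions. For completeness I would show that $\mathscr D$ is a \emph{closed} subspace of the complete space $\mathscr C(X^{(n)}_a,X^{(n)}_b)$. The requirement $g(w)\in X^{(n)}_w$ is automatic on the two boundary lines, since translation invariance of $\CX$ along vertical lines gives $X^{(n)}_{a+it}=X^{(n)}_a$ and $X^{(n)}_{b+it}=X^{(n)}_b$ as subsets of $\Sigma^n$; so only interior $w$ matter. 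For such $w$, Proposition~\ref{prop:couples} says the formal inclusion $X^{(n)}_w\to[X^{(n)}_a,X^{(n)}_b]_w$ is an isomorphic embedding, so (being $X^{(n)}_w$ complete) its image is closed in $[X^{(n)}_a,X^{(n)}_b]_w=\mathscr C(X^{(n)}_a,X^{(n)}_b)_w$; since the evaluation $\delta_w:\mathscr C(X^{(n)}_a,X^{(n)}_b)\to[X^{(n)}_a,X^{(n)}_b]_w$ is bounded, $\delta_w^{-1}\!\big(X^{(n)}_w\big)$ is closed, and $\mathscr D$ is the intersection of these sets over interior $w$, hence closed.

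Next, admissibility. Condition (a) is inherited from $\mathscr C(X^{(n)}_a,X^{(n)}_b)$, whose evaluations land boundedly in $\Sigma^n$ by Proposition~\ref{prop:couples}. For (b) fix a conformal $\varphi:\St_{a,b}\to\D$. The norm identity $\|\varphi g\|_{\mathscr D}=\|g\|_{\mathscr D}$ is free from the admissibility of the Calder\'on space once both $g$ and $\varphi g$ are known to lie in $\mathscr D$; and if $g\in\mathscr D$ then $\varphi g\in\mathscr C(X^{(n)}_a,X^{(n)}_b)$ with $(\varphi g)(w)=\varphi(w)g(w)\in X^{(n)}_w$ because the fibres are linear subspaces, so $\varphi g\in\mathscr D$. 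Conversely, if $\varphi g\in\mathscr D$ then $g=(\varphi g)/\varphi\in\mathscr C(X^{(n)}_a,X^{(n)}_b)$ by admissibility of the Calder\'on space, and $g(w)=(\varphi g)(w)/\varphi(w)\in X^{(n)}_w$ at every $w$ other than the unique zero $w_0$ of $\varphi$. The remaining point $g(w_0)\in X^{(n)}_{w_0}$ is where I expect \emph{the main obstacle} to be: my approach would be to choose, for $w_j\to w_0$, functions $f_j\in\CX$ with $\tau_{(n,0]}f_j(w_j)=g(w_j)$ and $\|f_j\|_{\CX}$ uniformly controlled near $w_0$ --- the control coming from local boundedness of $\|g(w)\|_{X^{(n)}_w}$, which follows from Proposition~\ref{prop:couples} and boundedness of the evaluations --- then to extract a subnet converging locally uniformly on $\St$ and pass to the limit to obtain an $f\in\CX$ with $\tau_{(n,0]}f(w_0)=g(w_0)$, which exhibits $g(w_0)$ as an element of $X^{(n)}_{w_0}$. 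The delicate ingredient is exactly that a locally bounded, locally uniformly convergent sequence drawn from $\CX$ still yields in the limit a function representing an element of the relevant Rochberg space, i.e.\ a ``removable singularity'' statement for the analytic family at $w_0$.

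Finally, the identity $\mathscr D_z=X^{(n)}_z$ and the ``besides'' clause. The inclusion $\mathscr D_z\subseteq X^{(n)}_z$ is immediate from the definition of $\mathscr D$; for the norm, if $x=g(z)$ with $g\in\mathscr D$ then $\|x\|_{X^{(n)}_z}\le C_z\|x\|_{[X^{(n)}_a,X^{(n)}_b]_z}=C_z\|g(z)\|_{\mathscr C(X^{(n)}_a,X^{(n)}_b)_z}\le C_z\|\delta_z\|\,\|g\|_{\mathscr D}$, where $C_z$ is the embedding constant of Proposition~\ref{prop:couples}; taking the infimum over $g$ gives $\|x\|_{X^{(n)}_z}\le C_z\|\delta_z\|\,\|x\|_{\mathscr D_z}$. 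For the reverse inclusion, given $x\in X^{(n)}_z=\CoX^{(n)}_z$ (recall $\CX^{(n)}_z=\CoX^{(n)}_z$ with equal norms, via multiplication by $\exp(\varepsilon(w-z)^{2+4k})$ with $k$ large) pick $f\in\CoX$ with $\tau_{(n,0]}f(z)=x$ and $\|f\|_{\CX}\le(1+\varepsilon)\|x\|_{X^{(n)}_z}$; then $F:=\tau_{(n,0]}f|_{\St_{a,b}}$ is an element of $\mathscr C(X^{(n)}_a,X^{(n)}_b)$ with $\|F\|_{\mathscr C(X^{(n)}_a,X^{(n)}_b)}\le\|f\|_{\CX}$ --- this is precisely the boundedness of the operator $R$ established inside the proof of Proposition~\ref{prop:couples} --- and $F(w)=\tau_{(n,0]}f(w)\in X^{(n)}_w$ for all $w\in\overline{\St_{a,b}}$ because $f\in\CX$, so $F\in\mathscr D$; since $F(z)=x$ this gives $\|x\|_{\mathscr D_z}\le\|f\|_{\CX}\le(1+\varepsilon)\|x\|_{X^{(n)}_z}$. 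The ``besides'' assertion is exactly this construction run with a chosen $f$ (replacing it, if necessary, by an element of $\CoX$ with the same $n$-jet at $z$ and almost the same norm) so that $\|f\|_{\CX}\approx\|x\|_{X^{(n)}_z}$: then $F(z)=x$ and $\|F\|_{\mathscr D}\le\|f\|_{\CX}\approx\|x\|_{X^{(n)}_z}\approx\|x\|_{\mathscr D_z}$, which is the stated $\|F\|_{\mathscr D}\le C\|x\|_{\mathscr D_z}$.
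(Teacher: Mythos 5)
Your treatment of completeness, of the two inclusions giving $\mathscr D_z=X^{(n)}_z$ with equivalent norms, and of the ``besides'' clause is correct and is essentially the paper's route: everything there reduces to Proposition~\ref{prop:couples} (the embedding constant for one direction, the boundedness of the restriction operator $R$ for the other). The problem is the step you yourself flag as the main obstacle, namely showing $g(w_0)\in X^{(n)}_{w_0}$ at the zero $w_0$ of the conformal map $\varphi$, and your proposed argument there has a genuine gap.

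You propose to pick extremals $f_j\in\CX$ for $g(w_j)$ with $w_j\to w_0$ and uniformly bounded norms, and then ``extract a subnet converging locally uniformly''. There is no such compactness available: Montel's theorem fails for analytic functions with values in an infinite-dimensional Banach space, so a norm-bounded sequence in $\CX$ need not have any subnet converging locally uniformly (nor even pointwise) to an element of $\CX$; and even if one had local uniform convergence on the open strip, membership of the limit in $\CX$ with controlled norm would require passing boundary conditions to the limit, which does not follow. As stated, this ``removable singularity'' step is precisely the kind of assertion that can fail (it is of the same nature as the $X\{z\}$ versus $X[z]$ distinction discussed in Section~\ref{sec:int-fam}), so it cannot be waved through.

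The fix is much simpler and uses two facts you already recorded elsewhere in your own write-up. Restrict attention to the vertical line $\Re(z)=\Re(w_0)$. On that line all the Rochberg spaces coincide: $X^{(n)}_z=X^{(n)}_{\Re w_0}$ as a \emph{fixed} subspace of $\Sigma^n$, by translation invariance of $\CX$ along vertical lines (the observation you used on the boundary lines in your completeness paragraph). Moreover, the argument about $R$ in the proof of Proposition~\ref{prop:couples} shows that $z\mapsto g(z)$ is continuous along that line with values in $[X^{(n)}_a,X^{(n)}_b]_{\Re w_0}$, and $X^{(n)}_{\Re w_0}$ is closed there because the formal inclusion is an isomorphic embedding of a complete space (the fact you used to prove $\mathscr D$ closed). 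Since $g(z)\in X^{(n)}_{\Re w_0}$ for every $z\neq w_0$ on the line, closedness plus continuity give $g(w_0)\in X^{(n)}_{\Re w_0}=X^{(n)}_{w_0}$, with no compactness needed.
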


\begin{proof} To prove that $\mathscr D$ is admissible it suffices to check that if $\varphi:\St_{a,b}\To\mathbb D$ is a conformal equivalence, $g:\St_{a,b}\To \Sigma^n$ is analytic  and $\varphi g\in\mathscr D$, then $g\in\mathscr D$. Of course that $g\in \mathscr C\big{(}X^{(n)}_a, X^{(n)}_b\big{)}$. Let us see that $g(z)\in X^{(n)}_z$ for all $z\in\St_{a,b}$. This is obvious if $\varphi(z)\neq 0$.
Put $\zeta=\varphi^{-1}(0)$ and notice that the reasoning about $R$ contained in the proof of Theorem~\ref{prop:couples} shows that the restriction of $g $ to the line $\Re(z)=\Re(\zeta)$ is a continuous map with values in  $[X^{(n)}_a, X^{(n)}_b]_{\Re\zeta}$. As $g(z)$ belongs to $X^{(n)}_{z}=X^{(n)}_{\Re\zeta}$ for every $z\neq \zeta$ in the line $\Re(z)=\Re(\zeta)$ and this space is closed in $[X^{(n)}_a, X^{(n)}_b]_{\Re\zeta}$, we conclude that $g(\zeta)\in X^{(n)}_{\Re\zeta}$ and so $\mathscr D$ is admissible. The ``besides'' part is clear after Theorem~\ref{prop:couples}.
\end{proof}

Thus, starting with a Banach couple $(X_0,X_1)$ sitting on $\St$ one obtains the family $X_c=\CX_c$ and the corresponding Rochberg spaces $X^{(n)}_c$ for $0<c<1$. These spaces can be twisted in two ways: one is forming the space $X^{(2n)}_c$ which leads to the self-extension
\begin{equation}\label{eq:2n}
\begin{CD}
0@>>> X^{(n)}_c@>>> X^{(2n)}_c @>>> X^{(n)}_c @>>> 0
\end{CD}
\end{equation}
described in Section~\ref{sec:entwining}. But the preceding Corollary \ref{cor:looming} also opens up the possibility of considering $X^{(n)}_c$ as one of the spaces of the analytic family induced by $\mathscr D$ which leads to the self-extension
\begin{equation}\label{eq:D2}
\begin{CD}
0@>>> X^{(n)}_c@>>> \mathscr D^{(2)}_c @>>> X^{(n)}_c @>>> 0
\end{CD}
\end{equation}
These extensions are different. Indeed, the differential  associated to (\ref{eq:2n}) is obtained as follows: given $x=(x_{n-1},\dots,x_0)$ in $X^{(n)}_c$ we select $f\in\CX$ such that $x=\tau_{(n,0]}f(c)$, with $\|f\|_{\CX}\approx \|x\|_{X^{(n)}_c}$ and set
$$
\Omega^{n,n}(x)= \tau_{(2n,n]}f(c).
$$
As for (\ref{eq:D2}) we can use the restriction $F$ of $\tau_{(n,0]}f$ to $\St_{a,b}$ as an extremal for $x$ in $\mathscr D$, so that the corresponding derivation is
$$
\Phi^{1,1}(x)=F'(c)=\left(\frac{f^{(n)}(c)}{(n-1)!}, \frac{f^{(n-1)}(c)}{(n-2)!},\dots, f'(c)  \right)=\Bigg{(}  \underbrace{n\frac{f^{(n)}(c)}{n!}}_{\text{nonlinear}}, \underbrace{(n-1)x_{n-1},\dots, x_1}_{\text{linear part}}  \Bigg{)}.
$$
This seems to indicate that, in a sense, (\ref{eq:2n}) ``twists'' $X^{(n)}_c$ more than (\ref{eq:D2}) does. This point will be discussed in depth in Section~\ref{sec:rambling}, in the broader context of acceptable spaces.

\subsection{The issue of families} To explain the role of acceptable families, let us explain why we have encountered insurmountable difficulties to generalize Theorem~\ref{prop:couples} to admissible families. Let $\U$ be a domain and let $\V$ be a subdomain with compact closure contained in $\U$. We fix conformal equivalences $\varphi:\D\To\U$ and $\phi:\D\To\V$ having the extension properties required in Section~\ref{sec:int-fam} and we denote again by $\varphi$ and $\phi$ their extensions to $\T$. These are well-defined up to a null set. \medskip

Suppose we are given an admissible interpolation family on $\U$, say $\mathcal X = (X_z)_{z \in \partial \U}$, with ambient space $\Sigma$, intersection $\Delta$ and containing function $k$. Fixing $n\geq 1$ we can consider the family of Rochberg spaces $X^{(n)}_z$ with $z$ varying in $\U$ (note that there are no Rochberg spaces on the original boundary $\partial\U!)$, which includes $\partial\V$. In this way we obtain another family, parametrized by $\partial\V$, namely $\mathcal{Y}=(Y_n)_{v\in\partial\V}$, where $Y_v=X^{(n)}_v$ for $v\in\partial\V$. We would like to make $\mathcal{Y}$ an interpolation family. To this end we can choose $\Sigma^n$ as the ambient space and $\Delta^n$ as the intersection space of $\mathcal Y$ so that the compactness of $\overline{\V}$ resolves the ``containing function'' issue:

\begin{lemma}
Under the above hypotheses there is a constant $C$ such that if $(x_{n-1},\dots,x_0)$ belongs to $X^{(n)}_v$ for some $v\in  \overline{\V}$, then $\sum_{0\leq i<n} \|x_i\|_{\Sigma} \leq C\|(x_{n-1},\dots,x_0)\|_{X^{(n)}_v}$.
\end{lemma}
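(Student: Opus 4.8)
The plan is to combine the uniform boundedness of the point evaluations on compact subsets of $\U$ with Cauchy's estimates for the derivatives, and to use the compactness of $\overline{\V}$ to make every constant independent of $v$. First I would fix the geometry: since $\overline{\V}$ is a compact subset of the open set $\U$, the number $2\rho:=\dist(\overline{\V},\partial\U)$ is strictly positive; put $r=\rho$ and
\[
L=\{w\in\mathbb C:\dist(w,\overline{\V})\leq r\}.
\]
Then $L$ is compact, $L\subset\U$, and for every $v\in\overline{\V}$ the closed disc $\{|w-v|\leq r\}$ lies inside $L$.

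The second step is to bound the evaluations uniformly on $L$. For each fixed $f\in\F$ the function $f$ is analytic on $\U$, hence continuous on the compact set $L$, so $\sup_{w\in L}\|f(w)\|_\Sigma<\infty$. Since $\F=\F(\mathcal X)$ is a Banach space and the evaluations $\delta_w:\F\To\Sigma$ are bounded for every $w\in\U$, the Banach--Steinhaus theorem gives
\[
M:=\sup_{w\in L}\|\delta_w:\F\To\Sigma\|<\infty .
\]
(One could alternatively invoke the bound $\|\delta_u:\G\To\Sigma\|\leq|K(z)|$ from Section~\ref{sec:int-fam}, which is locally bounded on $\U$, together with the fact that evaluations extend to $\F$ with the same norm; but Banach--Steinhaus is the shortest route.)

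Now for the Cauchy estimates. Let $(x_{n-1},\dots,x_0)\in X^{(n)}_v$ with $v\in\overline{\V}$, fix $\e>0$, and pick $f\in\F$ with $\tau_{(n,0]}f(v)=(x_{n-1},\dots,x_0)$ and $\|f\|_\F\leq(1+\e)\|(x_{n-1},\dots,x_0)\|_{X^{(n)}_v}$, so that $x_i=f^{(i)}(v)/i!$ for $0\leq i<n$. Since the circle $|w-v|=r$ is contained in $L$ we have $\|f(w)\|_\Sigma\leq M\|f\|_\F$ there, and the Cauchy integral formula for $\Sigma$-valued analytic functions yields
\[
\|x_i\|_\Sigma=\left\|\frac{1}{2\pi i}\oint_{|w-v|=r}\frac{f(w)}{(w-v)^{i+1}}\,dw\right\|_\Sigma\leq\frac{M\|f\|_\F}{r^{i}}\qquad(0\leq i<n).
\]
Summing over $i$ and letting $\e\to0$ gives
\[
\sum_{0\leq i<n}\|x_i\|_\Sigma\leq M\Big(\sum_{i=0}^{n-1}r^{-i}\Big)\,\|(x_{n-1},\dots,x_0)\|_{X^{(n)}_v},
\]
so the lemma holds with $C=M\sum_{i=0}^{n-1}r^{-i}$, a constant depending only on $n$, on $\F$, and on the pair $(\V,\U)$.

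The only delicate point — the ``main obstacle'', such as it is — is that the radius $r$, and hence $M$ and $C$, must be chosen once and for all, independently of $v\in\overline{\V}$; this is exactly what the compactness of $\overline{\V}$ buys us (it keeps the discs $\{|w-v|\leq r\}$ uniformly away from $\partial\U$). Everything else is routine: uniform boundedness of the evaluations on compacta via Banach--Steinhaus, plus the classical Cauchy bounds for derivatives.
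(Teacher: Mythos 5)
Your proof is correct and follows essentially the same route as the paper: thicken $\overline{\V}$ to a compact set staying a fixed distance $r$ inside $\U$, bound the evaluations uniformly there, and apply Cauchy's estimates to an almost-extremal $f$, yielding $C=M\sum_{i<n}r^{-i}$. The only (harmless) variation is that you obtain the uniform bound $M$ on the evaluations via Banach--Steinhaus, whereas the paper reads it off the outer function $K$ associated with the containing function, exactly the alternative you mention in passing.
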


\begin{proof}
Let $k:\partial\U\To(0,\infty)$ be the containing function of $\mathcal X$ and  $K:\D\To\mathbb C$ be the outer function associated to $k\circ\varphi$. Then, for every $u\in\U$, every $n\geq 0$ and every $R<\dist(u,\partial\U)$, one has
$$
\|\delta_u^{(n)}:\F(\mathcal{X})\To \Sigma\|\leq \frac{n!M(u,R)}{R^n},\quad\quad\text{where}\quad\quad M(u,R)=\max_{|u-z|\leq R}|K(\varphi^{-1}(z)|.
$$
This is straightforward from Cauchy's estimates.
Let $r={1\over 2}\dist(\V,\partial\U)$. Then
$\overline{\V}_r=\overline{\V}+\overline{\D}_r=\{v+z: v\in\overline{\V}, |z|\leq r\}
$
is a compact subset of $\U$ containing $\overline{\V}$, where $K\circ\varphi^{-1}$ has to be bounded, say by $M$.
Thus, for every $v\in\overline{\V}$, in particular for $v\in \partial\V$ one has $\|\delta_v^{(n)}:\F(\mathcal{X})\To \Sigma\|\leq n!M/r^n$.

Now, pick $v\in\partial\V$ and $x=(x_{n-1},\dots,x_0)$ in $X_v^{(n)}$. If $f\in\F(\mathcal{X})$  is such that $\tau_{(n,0]}f(v)=x$, we have
\[
\sum_{0\leq i<n} \|x_i\|_{\Sigma} \leq M\left(\sum_{0\leq i<n}r^{-i}\right) \|f\|_\F,
\]
as required.\end{proof}
This shows that $\Sigma^n$, with the sum norm, is a containing space for the family $\mathcal Y$, with containing function (actually constant) $M\left(\sum_{0\leq i<n}r^{-i}\right)$. Up to here the good news. The bad news are that we have been unable to establish the measurability of the function $v\in\partial\V\longmapsto\|x\|_{X_v^{(n)}}$ for fixed $x\in\Delta^n$, that is, we cannot guarantee that $\mathcal Y$ is an interpolation family. In the case of couples this was automatic as these functions are constant on each vertical line! Worse yet, even if one could stablish measurability in some cases (e.g., if the extremals are unique) or if one could dispose of this issue (replacing $N^+_\V$ by $A_\V$, or  something like that), it is unclear whether the hypothesized interpolation family would be admissible. All we know is the following result, which obviates these difficulties adding to the hypothesis a statement that we would have liked to put into the thesis, namely that the family of derived spaces is admissible.

\begin{proposition}\label{interpofamily}
With the above notations, if $\mathcal Y$ is an admissible interpolation family with intersection space $\Delta^n$, then,
 for every $z\in \mathbb V$, one has $Y_z=X_z^{(n)}$ with equivalence of norms.
\end{proposition}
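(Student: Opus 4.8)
The strategy mirrors the proof of Proposition~\ref{prop:couples}, but now on the subdomain $\mathbb V$ rather than a subinterval of the strip. Write $\F=\F(\mathcal X)$ for the acceptable space produced by the interpolation family $\mathcal X$ on $\U$, and $\G=\G(\mathcal X)$ for the (possibly incomplete) space that is dense in it. Let $\F(\mathcal Y)$ denote the acceptable space attached to the admissible family $\mathcal Y$ on $\mathbb V$, so that $Y_z=\F(\mathcal Y)_z$ for $z\in\mathbb V$. The goal is to produce two continuous inclusions $X_z^{(n)}\hookrightarrow Y_z$ and $Y_z\hookrightarrow X_z^{(n)}$ (the first contractive, the second merely bounded) whose composition is the identity, exactly as in Proposition~\ref{prop:couples}. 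The preceding Lemma guarantees that both $X_z^{(n)}$ and $Y_z$ sit inside $\Sigma^n$ (with the sum norm) with controlled constants, so the two inclusions make sense at the level of underlying sets and the only content is the norm comparison.

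\emph{Easy inclusion} ($X_z^{(n)}\subset Y_z$). First I would show that the \emph{Rochberg operator} $R\colon f\mapsto \tau_{(n,0]}f$ is bounded from $\F$ (equivalently, from $\G$) into $\F(\mathcal Y)$. For $g=\sum_j g_jx_j\in\G$ with $x_j\in\Delta$, the function $Rg$ takes values in $\Delta^n$ at every point of $\mathbb V$; since the evaluation $\delta_v^{(n)}$ on $\F$ has norm bounded by the constant $C$ of the Lemma uniformly for $v\in\overline{\mathbb V}$, the boundary values of $Rg$ satisfy $\|Rg(v)\|_{Y_v}=\|\tau_{(n,0]}g(v)\|_{X^{(n)}_v}\leq C'\|g\|_\F$ for a.\,e.\ $v\in\partial\mathbb V$, and each coordinate $g_j^{(i)}x_j$ is still of the admissible form (products of $N^+$ functions composed with a conformal map by vectors in $\Delta^n$), so $Rg\in\G(\mathcal Y)$ with $\|Rg\|_{\F(\mathcal Y)}\leq C'\|g\|_\F$. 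Passing to completions and composing with $\delta_z$ gives the commuting square
$$
\xymatrix{
\F \ar[d]_{\delta_z\circ\tau_{(n,0]}} \ar[r]^{R} & \F(\mathcal Y) \ar[d]^{\delta_z}\\
X_z^{(n)} \ar[r]_{\text{identity}} & Y_z
}
$$
which proves $\|x\|_{Y_z}\leq C'\|x\|_{X_z^{(n)}}$ for all $x\in X_z^{(n)}$.

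\emph{Hard inclusion} ($Y_z\subset X_z^{(n)}$). This is the analogue of the duality argument in Proposition~\ref{prop:couples}, and it is the main obstacle. Fix $z\in\mathbb V$ and take $x\in\Delta^n$; I want a bound $\|x\|_{X_z^{(n)}}\leq C\|x\|_{Y_z}$. The only tool available for lower bounds on the $X_z^{(n)}$-norm is the family of operators $T_n$ of Proposition~\ref{props-duality}, which is an isomorphism onto $(X_z^{(n)})^*$, so it suffices to dominate $|T_n\xi(x)|$ by $\|\xi\|_{(\F^\bigstar)^{(n)}_z}\,\|x\|_{Y_z}$ for all $\xi\in(\F^\bigstar)^{(n)}_z$. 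Given such $\xi$, choose $h\in\F^\bigstar$ with $\tau_{(n,0]}h(z)=\xi$ and $\|h\|_{\F^\bigstar}\leq(1+\e)\|\xi\|$; since $\mathcal Y$ is \emph{admissible}, choose also $G\colon\mathbb V\to\Sigma^n$ in $\G(\mathcal Y)$ with $G(z)=x$ and $\|G\|_{\F(\mathcal Y)}\leq(1+\e)\|x\|_{Y_z}$. Then form the scalar function $f(w)=T_n\big(\tau_{(n,0]}h(w)\big)(G(w))$ on $\mathbb V$, which is analytic and satisfies $f(z)=T_n\xi(x)$; on $\partial\mathbb V$ the coordinates of $G(w)$ lie in $X_w^{(n)}=Y_w$ and those of $\tau_{(n,0]}h(w)$ lie in $(\F^\bigstar)^{(n)}_w$, so the estimate \eqref{eq:||Tn||} of Proposition~\ref{props-duality}, applied \emph{at boundary points $w\in\partial\mathbb V$}, gives
$$
|f(w)|\leq \frac{\|\tau_{(n,0]}h(w)\|_{(\F^\bigstar)^{(n)}_w}\,\|G(w)\|_{X^{(n)}_w}}{\dist(w,\partial\U)^{n-1}}\leq \frac{(1+\e)^2\,\|\xi\|\,\|x\|_{Y_z}}{\big(\dist(\mathbb V,\partial\U)\big)^{n-1}},
$$
the denominator being bounded below uniformly because $\overline{\mathbb V}$ is compact in $\U$. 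By the maximum principle $|f(z)|$ obeys the same bound, and letting $\e\to0$ yields $|T_n\xi(x)|\leq C\|\xi\|\,\|x\|_{Y_z}$ with $C=\dist(\mathbb V,\partial\U)^{1-n}$. This gives $\|x\|_{X_z^{(n)}}\leq C'\|x\|_{Y_z}$ on the dense subset $\Delta^n$, hence on all of $Y_z$.

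The delicate points I expect to have to be careful about are: (i) that the coordinates of $\tau_{(n,0]}h$ can be taken with values in a space on which the duality pairing against $G(w)$ is well defined on $\partial\mathbb V$ — this is why one wants $h$ of the special form with vectors in the right space, as in the passage around \eqref{petunin}, using that $\Delta^n$ is dense in $Y_z$ and in each $Y_v$; and (ii) that $f=T_n(\tau_{(n,0]}h(\cdot))(G(\cdot))$ is genuinely analytic and bounded on $\mathbb V$, which follows exactly as in Lemma~\ref{lem-h} once one writes $T_n\xi(x)=\sum_{j=0}^{n-1}\langle\xi_j,x_{n-1-j}\rangle$ and recognizes $f$ as (up to a factorial) the $(n-1)$-st Taylor coefficient of $w\mapsto\langle h(w),G_0(w)\rangle$ for the appropriate scalar-valued function — the product structure of $\G(\mathcal Y)$ and of $\F^\bigstar$ keeps everything in $N^+$ on each conformal chart. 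No acceptable (as opposed to admissible) space is needed here; the admissibility of $\mathcal Y$, hypothesized in the statement, is precisely what makes the extremal $G$ available and lets the boundary estimate run.
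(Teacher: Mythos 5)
Your argument is correct and follows essentially the same route as the paper's proof: the contractive inclusion $X_z^{(n)}\subset Y_z$ via restriction of $\tau_{(n,0]}g$ to $\mathbb V$ (with the coefficient functions landing in the Smirnov class of $\mathbb V$ because $\overline{\mathbb V}$ is compact in $\U$), and the reverse estimate via the operators $T_n$ of Proposition~\ref{props-duality}, the scalar function $w\mapsto T_n(\tau_{(n,0]}h(w))(G(w))$, the boundary bound with $\dist(\partial\mathbb V,\partial\U)^{1-n}$, and the maximum principle. The only cosmetic difference is that the paper transfers explicitly to the disc through $\phi$ and invokes \cite[Proposition 2.5]{Coifman1982} to ensure boundedness, where you use the Lemma~\ref{lem-h}-style $N^+$ plus bounded-boundary argument, which is an equivalent way to justify the same step.
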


\begin{proof}
Let us prove first that, for each $v\in\V$, one has $X^{(n)}_v\subset Y_v$, and the inclusion is contractive.
Pick $x\in \Delta^n$ and then $g\in\G(\mathcal{X})$ such that $x=\tau_{(n,0]}g(v)$. 
Let $f:\V\To \Sigma^n$ be the restriction of
$\tau_{(n,0]}g$ to $\V$. Then $f\in\G(\mathcal Y)$: indeed, if we write
$g = \sum g_j a_j$, with $g_j\in N^+_\U$ and $a_j\in\Delta$, then the successive derivatives of each $g_j$ are all bounded on $\V$ and so they belong to $ N^+_\V$.
Besides, we have $\|f(z)\|_{X^{(n)}_z}\leq \|g\|_\F$ for every $z\in\partial\V$, so we have $f\in\G(\mathcal Y)$, with $x=\delta_v f$, and
$$\|x\|_{Y_v}\leq \|f\|_{\G(\mathcal Y)}\leq \|g\|_{\F(\mathcal X)}.$$
Since $g$ is arbitrary and $\Delta^n$ is dense in $X^{(n)}_v$ we are done.\medskip

We now prove the reversed containment and obtain the corresponding bound.
This part uses  duality in a critical way. First, since $T_n: {\W(\mathcal X)^{(n)}_v}\To (X_v^{(n)})^*$ is an isomorphism, it suffices to see that there is a constant $K$ such that, if $x\in\Delta^n$ and $\xi\in (\Delta^\star)^n, \|\xi\|_{\W(\mathcal X)^{(n)}_v}<1$, then
$$
|T_n\xi(x)|\leq K \|x\|_{Y_v}.
$$
So, take $g\in\G(\mathcal Y)$ such that $g(v)=x$, with $\|g\|_{\F(\mathcal Y)}\leq (1+\e)\|x\|_{Y_v}$ and $h\in\WX$ so that $\xi=\tau_{(n,0]}h(v)$, with $\|h\|_{\WX}\leq 1$.

By \cite[Proposition 2.5]{Coifman1982} we can assume that the coefficient functions of $g$ are bounded on $\V$. Therefore, using the conformal map $\phi:\D\to\V$ we may consider the function $f:\D\To\mathbb C$ defined by
$$f(z)=(T_n (\tau_{(n,0]}h(\phi(z))))(g(\phi(z))).$$
Then $f$ is analytic, bounded on $\D$ and $f(\phi^{-1}(v))=T_n\xi(x)$.
Moreover, for almost every $z\in\T$, one has
$$
|f(z)|\leq \big{\|}T_n:\W^{(n)}_{\phi(z)}\To\big{(}X^{(n)}_{\phi(z)}\big{)}^*\big{\|}\:\big{\|}\tau_{(n,0]}h(\phi(z))\big{\|}_{\W^{(n)}_{\phi(z)}}
\:\big{\|}g(\phi(z))\big{\|}_{X^{(n)}_{\phi(z)}}\leq\frac{\|g(\phi(z))\|_{Y_{\phi(z)}}}{\dist(\partial\V,\partial\U)^{n-1}}\leq
\frac{(1+\e)\|x\|_{Y_v}}{\dist(\partial\V,\partial\U)^{n-1}},
$$
and the result follows from the maximum principle.
\end{proof}

\subsection{The case of analytic families on the disc}

This and the next sections do what we wanted to do in the previous section at the cost of working in the general setting of acceptable spaces.
Precisely, what we will show is that if $\F$ is an acceptable space of analytic functions on a domain $\U$ then the family of Rochberg spaces $\F^{(n)}_z$, for $z$ varying in $\U$ and $n\geq 2$ fixed, is the analytic family associated to another acceptable space which is naturally attached to $\F$. This result has no counterpart for admissible spaces. It actually was our original motivation to introduce the notion of an acceptable space and what fully justifies our approach. We will treat in this section the case where the domain is the disc, taking advantage of the fact that the underlying algebra $A^\infty$ admits differentiation. The adjustments required to work on general domains are carried out in the next section.\medskip

Let $\F$ be an acceptable space on the disc and let $\mathscr H=\mathscr H(\D,\Sigma)$ be the space of all holomorphic functions from $\D$ to $\Sigma$, the ambient space of $\F$. We inductively define a sequence of Banach spaces $\Fn$, formally subspaces of the product $\mathscr H^n$  as follows:\medskip

$\bullet$ $\F^{(1)} = \F$.

$\bullet$ Once $\Fn$ is defined we consider the linear map $\tau_{[n,1]}:\F\longrightarrow\mathscr H^n$ and set
$$
\F^{(n+1)}=\Fn\oplus_{\tau_{[n,1]}}\F=
\left\{(f_n,\dots,f_1,f)\in\mathscr H^{n+1}:
f\in\F \text{ and }
(f_n,\dots,f_1)-\tau_{[n,1]}(f)\in\Fn\right\},
$$
endowed with the norm $
\|(f_n,\dots,f_1,f)\|_{\mathscr F^{(n+1)}}=
\left\|
(f_n,\dots,f_1)-\tau_{[n,1]}(f)\right\|_{\Fn}+\|f\|_\F.
$\medskip

Observe that $\F^{(2)}$ consist of those pairs $(g,f)$ such that both $f$ and $g-f'$ are in $\F$, with norm $\|(g,f)\|=\|g-f'\|+\|f\|$. To compute $\F^{(3)}$, pick $(f_2,f_1,f_0)$. Of course $f_0$ has to be in $\F$, while
$(f_2-f_0''/2,f_1-f_0')$ must be in $\F^{(2)}$, that is, both
$f_1-f_0'$ and $f_2-f_0''/2-(f_1-f_0')'$ must be in $\F$, so in the end the norm of $(f_2,f_1,f_0)$ in $\F^{(3)}$ is
$\|f_2-f_1'+f_0''-f_0''/2\|+\|f_1-f_0'\|+\|f_0\|$. Instead of spoiling all the fun presenting the 4D case, let us see an explicit formula that works in general. The form of the coefficients that appear in the following result can somehow be considered a lucky strike:

\begin{lemma}\label{lem:stroke}
Fix $n\geq 1$ and let $f_i\in \mathscr H$ for $0\leq i<n$. Then $(f_{n-1},\dots,f_0)$ belongs to $\F^{(n)}$ if and only if for each $0\leq i<n$ the sum
$$
f_i+\sum_{1\leq k\leq i}\frac{(-1)^k}{k!}f_{i-k}^{(k)}
$$
falls into $\F$, where the sum over the empty set is treated as zero. Moreover, for such an array $(f_{n-1},\dots,f_0)$ one has
$$
\|(f_{n-1},\dots,f_0)\|_{\Fn}= \|f_0\|_\F+ \sum_{0<i<n}\left\| f_i+\sum_{1\leq k\leq i}\frac{(-1)^k}{k!}f_{i-k}^{(k)}	\right\|_\F.
$$
\end{lemma}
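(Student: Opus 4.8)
The plan is to argue by induction on $n$, directly from the inductive definition of the spaces $\F^{(n)}$. The base case $n=1$ is immediate: $\F^{(1)}=\F$, the sum $\sum_{1\le k\le 0}$ is empty, so the membership condition is simply $f_0\in\F$ and the norm formula reads $\|f_0\|_\F$. For the inductive step I would fix $n\ge1$, assume the statement for $\F^{(n)}$, and take an array $(f_n,f_{n-1},\dots,f_0)\in\mathscr H^{n+1}$. Unwinding the definition $\F^{(n+1)}=\F^{(n)}\oplus_{\tau_{[n,1]}}\F$, this array lies in $\F^{(n+1)}$ precisely when $f_0\in\F$ and the auxiliary array $(c_{n-1},\dots,c_0)$ given by $c_i=f_{i+1}-f_0^{(i+1)}/(i+1)!$ lies in $\F^{(n)}$, in which case $\|(f_n,\dots,f_0)\|_{\F^{(n+1)}}=\|(c_{n-1},\dots,c_0)\|_{\F^{(n)}}+\|f_0\|_\F$. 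So everything reduces to feeding the $c_i$ into the induction hypothesis and simplifying.

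The core computation is the claim that, for $0\le i<n$,
$$
c_i+\sum_{1\le k\le i}\frac{(-1)^k}{k!}\,c_{i-k}^{(k)}
= f_{i+1}+\sum_{1\le k\le i+1}\frac{(-1)^k}{k!}\,f_{i+1-k}^{(k)}.
$$
To prove it I would substitute $c_j=f_{j+1}-f_0^{(j+1)}/(j+1)!$ and split the left-hand side into an $f$-part and an $f_0$-part. The $f$-part is exactly $f_{i+1}+\sum_{1\le k\le i}\frac{(-1)^k}{k!}f_{i+1-k}^{(k)}$; using that $\bigl(f_0^{(i+1-k)}/(i+1-k)!\bigr)^{(k)}=f_0^{(i+1)}/(i+1-k)!$, the $f_0$-part equals $-f_0^{(i+1)}\sum_{0\le k\le i}\frac{(-1)^k}{k!\,(i+1-k)!}$. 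Here the combinatorial identity (\ref{eq:1/n!}), applied with $n=i+1$, does the work: it forces $\sum_{0\le k\le i}\frac{(-1)^k}{k!(i+1-k)!}=-\frac{(-1)^{i+1}}{(i+1)!}$, so the $f_0$-part equals $\frac{(-1)^{i+1}}{(i+1)!}f_0^{(i+1)}$, which is precisely the missing $k=i+1$ summand on the right. This is the ``lucky strike'' alluded to above; in particular the case $i=0$ reads $c_0=f_1-f_0'$.

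With this identification the induction step closes at once. By the induction hypothesis $(c_{n-1},\dots,c_0)\in\F^{(n)}$ if and only if $f_{i+1}+\sum_{1\le k\le i+1}\frac{(-1)^k}{k!}f_{i+1-k}^{(k)}\in\F$ for every $0\le i<n$, which together with $f_0\in\F$ is exactly the asserted criterion for $(f_n,\dots,f_0)$ to belong to $\F^{(n+1)}$. For the norm, the induction hypothesis converts $\|(c_{n-1},\dots,c_0)\|_{\F^{(n)}}$ into
$$
\|c_0\|_\F+\sum_{0<i<n}\Bigl\|\,c_i+\sum_{1\le k\le i}\tfrac{(-1)^k}{k!}\,c_{i-k}^{(k)}\Bigr\|_\F
=\sum_{1\le j\le n}\Bigl\|\,f_j+\sum_{1\le k\le j}\tfrac{(-1)^k}{k!}\,f_{j-k}^{(k)}\Bigr\|_\F
$$
after the re-indexing $j=i+1$, and adding $\|f_0\|_\F$ produces the announced norm of $(f_n,\dots,f_0)$ in $\F^{(n+1)}$.

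The only genuine obstacle I anticipate is clerical: keeping straight the shift of indices between the length-$(n+1)$ array and the auxiliary length-$n$ array, and recognising that (\ref{eq:1/n!}) is exactly the relation that collapses the tower of derivatives of $f_0$ into the single term $\frac{(-1)^{i+1}}{(i+1)!}f_0^{(i+1)}$. Everything else is routine bookkeeping.
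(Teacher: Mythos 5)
Your proof is correct and follows essentially the same route as the paper's: induction on $n$ via the definition $\F^{(n+1)}=\F^{(n)}\oplus_{\tau_{[n,1]}}\F$, passing to the auxiliary array $c_i=f_{i+1}-f_0^{(i+1)}/(i+1)!$ and collapsing the $f_0$-terms with the identity (\ref{eq:1/n!}). Your bookkeeping is in fact slightly more careful than the paper's (which has a typographical slip writing $(i+1)!$ where $(i+1-k)!$ is meant), so nothing needs to be changed.
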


\begin{proof}The proof goes by induction on $n$. The initial step $n=1$ is trivial, so let us assume that the lemma holds for $n$ and let us check the corresponding statement for $n+1$. Pick $n+1$ functions $f_i\in\mathscr H$ for $0\leq i\leq n$. By the very definition,
$(f_{n},\dots,f_0)\in\F^{(n+1)}$ if and only if $f_0\in\F$ and $(f_{n},\dots,f_1)-\tau_{[n,1]}f_0$ belongs to $\Fn$. Write
$$
(f_{n},\dots,f_1)-\tau_{[n,1]}f_0=\left(f_n-\frac{f_0^{(n)}}{n!},\dots, f_1- f_0'  \right) =  (g_{n-1},\dots,g_0).
$$
Then the induction hypothesis says that  $(g_{n-1},\dots,g_0)\in\Fn$ if and only if for each $0\leq i\leq n-1$ the following sum belongs to $\F$:
$$
g_i+\sum_{0< k\leq i}\frac{(-1)^k}{k!}g_{i-k}^{(k)}
=
f_{i+1}- \frac{f_0^{(i+1)}}{(i+1)!}+\sum_{0< k\leq i}\frac{(-1)^k}{k!}\left(f_{i+1-k}^{(k)}-\frac{f_0^{(i+1-k+k)}}{(i+1)!}\right) = f_{i+1}+
\sum_{0< k\leq i+1}\frac{(-1)^k}{k!}f_{i+1-k}^{(k)}
$$
because
$$
\frac{-1}{(i+1)!}+ \sum_{0< k\leq i}\frac{(-1)^k}{k!}\frac{-1}{(i+1)!}= \frac{(-1)^{i+1}}{(i+1)!};
$$
(see Equation~\ref{eq:1/n!}). Probably it is not necessary to say anything more.
\end{proof}

Note that the Lemma  implies, among other things, that $$\F^{(n+1)}=\F\oplus_\Phi\Fn,\quad\quad\text{ with}\quad\quad
\Phi(f_{n-1},\dots,f_1,f_0)=-\sum_{1\leq k\leq n}\frac{(-1)^{k}}{k!}f_{n-k}^{(k)}=-\sum_{0\leq k\leq n-1}\frac{(-1)^{n-k}}{(n-k)!}f_{k}^{(n-k)}
$$
and also:

\begin{corollary}\label{cor:and-also} With the same notations as before  $F\in \mathscr H^n$ belongs to $\F^{(n)}$ if and only it has the form
$$
F=\left(  \frac{f_0^{(n-1)}}{(n-1)!}+\frac{f_1^{(n-2)}}{(n-2)!}+\dots+f_{n-1},\,\dots\, ,  \frac{f_0''}{2!}+f'_1+f_2\,,\, f_0'+f_1\,,\,f_0\right)
$$
with $f_i\in\F$ for $0\leq i<n$, in which case $\|F\|_{\F^{(n)}}$ is equivalent to $\sum_{0\leq i<n}\|f_i\|_{\F}$.
\end{corollary}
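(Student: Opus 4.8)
The plan is to recognise the description of $\F^{(n)}$ in Corollary~\ref{cor:and-also} as a mere change of variables in Lemma~\ref{lem:stroke}. For a tuple $(g_0,\dots,g_{n-1})\in\mathscr H^n$ introduce the two linear self-maps of $\mathscr H^n$
$$
(\Theta g)_i=\sum_{k=0}^{i}\frac{g_{i-k}^{(k)}}{k!}, \qquad\qquad (\Lambda g)_i=\sum_{k=0}^{i}\frac{(-1)^k}{k!}\,g_{i-k}^{(k)}\qquad(0\le i<n),
$$
noting that $(\Theta g)_i=\sum_{j=0}^{i}\frac{g_j^{(i-j)}}{(i-j)!}$ is exactly the $i$-th coordinate appearing in the statement. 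Lemma~\ref{lem:stroke} then says precisely that $F=(F_{n-1},\dots,F_0)$ lies in $\F^{(n)}$ if and only if each coordinate of $\Lambda F$ lies in $\F$, and that in this case $\|F\|_{\F^{(n)}}=\sum_{0\le i<n}\|(\Lambda F)_i\|_\F$ (using that $(\Lambda F)_0=F_0$). The statement to be proved asks instead that $F\in\F^{(n)}$ exactly when $F=\Theta(f_0,\dots,f_{n-1})$ for some $f_i\in\F$, with $\|F\|_{\F^{(n)}}$ comparable to $\sum_i\|f_i\|_\F$. So the whole corollary reduces to the purely formal fact that $\Theta$ and $\Lambda$ are mutually inverse bijections of $\mathscr H^n$: once this is known, $f:=\Lambda F$ is the unique $\Theta$-preimage of $F$, the two membership criteria coincide, and, reading off the norm formula of Lemma~\ref{lem:stroke}, the asserted ``equivalence'' is in fact the equality $\|F\|_{\F^{(n)}}=\sum_{0\le i<n}\|f_i\|_\F$.

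To prove $\Lambda\Theta=\mathrm{id}$, fix $g\in\mathscr H^n$ and $0\le i<n$ and expand
$$
(\Lambda\Theta g)_i=\sum_{k=0}^{i}\frac{(-1)^k}{k!}\,\frac{d^k}{dz^k}\!\left(\sum_{l=0}^{i-k}\frac{g_{i-k-l}^{(l)}}{l!}\right)=\sum_{k=0}^{i}\ \sum_{l=0}^{i-k}\frac{(-1)^k}{k!\,l!}\,g_{i-k-l}^{(k+l)}.
$$
For fixed $j$ with $0\le j\le i$, the terms contributing $g_j^{(i-j)}$ are those with $k+l=i-j=:m$, so the coefficient of $g_j^{(i-j)}$ equals $\sum_{k=0}^{m}\frac{(-1)^k}{k!\,(m-k)!}$, which is $0$ for $m\ge 1$ and $1$ for $m=0$ by the identity~\ref{eq:1/n!}. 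Hence only the term $j=i$ survives and $(\Lambda\Theta g)_i=g_i$. The computation of $\Theta\Lambda$ is identical with the sign carried by the other factor (the relevant coefficient is $\sum_{l=0}^{m}\frac{1}{(m-l)!}\cdot\frac{(-1)^l}{l!}$, again $\delta_{0m}$ by~\ref{eq:1/n!}), so $\Theta\Lambda=\mathrm{id}$ as well. This establishes the claim, and with it the corollary, as explained above; in particular the representation $F=\Theta f$ is unique and $\|F\|_{\F^{(n)}}=\sum_{0\le i<n}\|f_i\|_\F$.

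The only step where anything could go astray is the double-sum re-indexing in the displayed computation together with the clean invocation of~\ref{eq:1/n!}; everything else is bookkeeping matching the two parametrisations of $\F^{(n)}$. It is worth adding a sentence pointing out that this corollary in particular exhibits, for each fixed $n$, a (non-canonical) isomorphism $\F^{(n)}\cong\F^{\,n}$ of Banach spaces, namely $F\mapsto\Lambda F$, which will be convenient later.
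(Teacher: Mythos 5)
Your proof is correct and is essentially the argument the paper intends: the corollary is read off from Lemma~\ref{lem:stroke} by inverting the triangular transformation, with the cancellation governed by the identity (\ref{eq:1/n!}); your maps $\Theta$ and $\Lambda$ just make this inversion explicit, and your verification $\Lambda\Theta=\Theta\Lambda=\mathrm{id}$ is accurate. Note that you in fact obtain the norm \emph{equality} $\|F\|_{\F^{(n)}}=\sum_{0\leq i<n}\|f_i\|_{\F}$, which is stronger than (and of course implies) the equivalence stated in the corollary.
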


Let us then prove what has brought us here:

\begin{proposition}\label{prop:Fnaccep}
If $\F$ is an acceptable space of analytic functions on the disc, then so is $\Fn$ for every $n\geq 1$. Moreover:
\begin{itemize}
\item If $f\in\F$, then $\tau_{(n,0]}f\in\Fn$ and $\|\tau_{(n,0]}f\|_{\Fn}=\|f\|_\F$.
\item The analytic family associated to $\Fn$ are the Rochberg spaces $(\Fn_z)_{z\in\D}$, up to equivalence of norms.
\end{itemize}
\end{proposition}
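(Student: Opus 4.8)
The plan is to reduce everything to the two explicit descriptions of $\Fn$ just obtained. For a tuple $F=(f_{n-1},\dots,f_0)$ of holomorphic $\Sigma$-valued functions write $T_i(F)=\sum_{0\le k\le i}\frac{(-1)^{k}}{k!}f_{i-k}^{(k)}$. Lemma~\ref{lem:stroke} says precisely that $F\in\Fn$ iff each $T_i(F)\in\F$ and then $\|F\|_{\Fn}=\sum_{0\le i<n}\|T_i(F)\|_\F$, so the map $\Theta\colon F\mapsto(T_0(F),\dots,T_{n-1}(F))$ is an isometry of $\Fn$ into $\F^n$ with the norm $\|(\phi_0,\dots,\phi_{n-1})\|=\sum_i\|\phi_i\|_\F$; Corollary~\ref{cor:and-also} says $\Theta$ is onto, with inverse $(\phi_j)\mapsto\big(\sum_{0\le j\le i}\frac{\phi_j^{(i-j)}}{(i-j)!}\big)_{0\le i<n}$. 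In particular $\Fn$ is a Banach space. The one extra tool I would record first is a Leibniz-type identity: a short computation with Leibniz's rule and $\sum_k(-1)^k\binom{m}{k}=0$ gives, for $a$ holomorphic near $\overline\D$ and any $F$,
$$
T_i(aF)=\sum_{m=0}^i\frac{(-1)^m}{m!}\,a^{(m)}\,T_{i-m}(F)\qquad(0\le i<n),
$$
where $aF=(af_{n-1},\dots,af_0)$ is componentwise multiplication. Everything else I would extract from $\Theta$ and this identity.

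Next I would check the three clauses of Definition~\ref{def:ac}. For (a): $\delta_z(F)=(F_{n-1}(z),\dots,F_0(z))$ and $F_i(z)=\sum_{0\le j\le i}\frac{1}{(i-j)!}\delta_z^{(i-j)}(T_j(F))$, and since all $\delta_z^{(m)}\colon\F\to\Sigma$ are bounded (boundedness of $\delta_z$ plus Banach--Steinhaus, as recalled in Section~\ref{sec:entwining}) and $\|T_j(F)\|_\F\le\|F\|_{\Fn}$, the evaluation $\delta_z\colon\Fn\to\Sigma^n$ is bounded. For (b): I take componentwise multiplication as the $A^\infty_\D$-action; the Leibniz identity shows $aF\in\Fn$ for $a\in A^\infty_\D$, $F\in\Fn$, with $\|aF\|_{\Fn}\le\big(\sum_{m<n}\frac{1}{m!}\|a^{(m)}\|_{L(\F)}\big)\|F\|_{\Fn}$ --- here one genuinely needs that $A^\infty_\D$ is closed under differentiation (Appendix), so that each $a^{(m)}$ is again a multiplier of $\F$ --- and joint continuity follows from continuity of differentiation on $A^\infty_\D$ together with joint continuity of the action on $\F$. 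For (c): if $\varphi$ is a conformal automorphism of $\D$ then $\varphi\in A^\infty_\D$, so $\varphi F\in\Fn$ whenever $F\in\Fn$, with the norm bound from (b); conversely, if $\varphi F\in\Fn$ the identity gives $\varphi\,T_i(F)=T_i(\varphi F)-\sum_{1\le m\le i}\frac{(-1)^m}{m!}\varphi^{(m)}T_{i-m}(F)$, and an induction on $i$ --- using that $T_i(F)$ is holomorphic on all of $\D$ and that $\F$ itself satisfies (c), i.e.\ $\varphi h\in\F\Leftrightarrow h\in\F$ --- yields $T_i(F)\in\F$ for every $i$, hence $F\in\Fn$, with the matching norm estimate. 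This proves $\Fn$ is acceptable.

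For the two ``moreover'' assertions: the first is immediate from Lemma~\ref{lem:stroke}, since $T_i(\tau_{(n,0]}f)=\frac{f^{(i)}}{i!}\sum_{0\le k\le i}(-1)^k\binom{i}{k}$ vanishes for $i\ge1$ and equals $f$ for $i=0$, whence $\tau_{(n,0]}f\in\Fn$ with $\|\tau_{(n,0]}f\|_{\Fn}=\|f\|_\F$. For the second I would identify $\delta_z(\Fn)$ with its quotient norm and the Rochberg space $\Fn_z$. The inclusion $\Fn_z\subseteq\delta_z(\Fn)$ with contractive norm is the first bullet applied to extremals ($\delta_z(\tau_{(n,0]}f)=\tau_{(n,0]}f(z)$). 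For the reverse, given $F\in\Fn$ I would put $\phi_j=T_j(F)\in\F$ and $f(\zeta)=\sum_{j=0}^{n-1}(\zeta-z)^j\phi_j(\zeta)$; since $(\zeta-z)^j$ is a polynomial, hence lies in $A^\infty_\D$, we get $f\in\F$, and comparing Taylor coefficients at $z$ (and using Corollary~\ref{cor:and-also} to write $F_i=\sum_{j\le i}\tau_{i-j}(\phi_j)$) shows $\tau_{(n,0]}f(z)=\delta_z(F)$, while $\|f\|_\F\le\big(\max_{j<n}\|(\zeta-z)^j\|_{L(\F)}\big)\sum_j\|\phi_j\|_\F=C_z\|F\|_{\Fn}$ by the isometry $\Theta$. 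Hence $\delta_z(\Fn)=\Fn_z$ with norms equivalent up to constants depending on $z$ and $n$.

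The step I expect to require most care is condition (b), and through it the ``only if'' half of (c): the Leibniz identity forces the derivatives $a^{(m)}$ and $\varphi^{(m)}$ into the estimates, so one truly needs a multiplier algebra that is also stable under differentiation --- exactly the property of $A^\infty_\D$ that admissible spaces (whose natural multiplier algebra is only $W^+$ or $H^\infty$) fail to provide, which is why $\Fn$ turns out acceptable but in general not admissible in the sense of Definition~\ref{def:ad}. A secondary delicate point is the induction in (c): since $\varphi$ can vanish at a point of $\D$, one may ``divide by $\varphi$'' only after the lower-order components $T_0(F),\dots,T_{i-1}(F)$ have already been shown to lie in $\F$.
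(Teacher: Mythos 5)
Your proof is correct, but it takes a genuinely different route from the paper's. The paper proves Proposition \ref{prop:Fnaccep} by induction on $n$, working directly with the twisted-sum presentation $\F^{(n+1)}=\Fn\oplus_{\tau_{[n,1]}}\F$: completeness comes from the three-space property, the $A^\infty$-module structure from a general criterion for twisted sums together with the computation of $\tau_{[n,1]}(af)-a\tau_{[n,1]}(f)$, and the identification of $(\F^{(n+1)})_z$ with $\F^{(n+1)}_z$ from an inductive construction of an extremal $f=ag+f_0$, where $a=P(\phi)$ is a polynomial in an automorphism vanishing at $z$ chosen via \cite[Lemma~1]{cck} so that $a^{(k)}(z)=\delta_{k1}$. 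You instead globalize Lemma \ref{lem:stroke} and Corollary \ref{cor:and-also} into the surjective isometry $\Theta:\Fn\to\F^n$ and push everything through the single intertwining identity $T_i(aF)=\sum_{m\le i}\frac{(-1)^m}{m!}a^{(m)}T_{i-m}(F)$, which is indeed correct (Leibniz plus reindexing). This buys: completeness for free; conditions (a)--(c) reduced to statements about $\F$ and about derivatives of elements of $A^\infty_\D$, with only a short induction on the index $i$ inside (c) instead of an induction on $n$; and, for the ``moreover'' part, the explicit extremal $f(\zeta)=\sum_{j}(\zeta-z)^jT_j(F)(\zeta)$, which replaces the paper's corrective polynomial and makes the constant $\max_{j<n}\|(\zeta-z)^j\|_{L(\F)}$ visible. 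Two points deserve to be spelled out, though neither is a gap: first, that $T_i$ applied to the tuple with components $\sum_{j\le i}\tau_{i-j}(\phi_j)$ returns exactly $\phi_i$ --- this is where the alternating identity (\ref{eq:1/n!}), i.e.\ $\sum_k(-1)^k\binom{m}{k}=0$ for $m\geq 1$, actually enters, and it is needed both for your formula for $\Theta^{-1}$ and for the verification $\tau_{(n,0]}f(z)=\delta_zF$; second, joint continuity in (b), which follows cleanly by noting that $a\mapsto a^{(m)}$ is continuous on $A^\infty_\D$ and $\|T_j(G)\|_\F\le\|G\|_{\Fn}$, so each term $a^{(m)}T_{i-m}(F)$ depends jointly continuously on $(a,F)$ --- the paper's own proof is no more detailed on this last point.
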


\begin{proof} We first observe that each $n$-tuple $(g_n,\dots,g_1)$ in $\prod_{i=1}^n\mathscr H(\D,\Sigma)$ can be seen as an analytic function from $\D$ to $\Sigma^n$ just letting
$(g_n,\dots,g_1)(z)=(g_n(z),\dots,g_1(z))$, where $\Sigma^n$ can be equipped with the direct sum norm, so certainly $\Fn$ is a space of analytic functions.\medskip

The result is trivial when $n=1$ and will be established by induction on $n$. So, let us assume it true for $1,\dots, n$ and prove it for $n+1$.
To check completeness, just observe that $\F^{(n+1)}$ is a twisted sum of $\F$ by $\F^{(n)}$ and that those spaces are complete by the induction hypothesis. A classical 3-space result \cite{castgonz} then asserts that a twisted sum of complete spaces is complete. In  order to prove that the evaluations $\delta_z: \F^{(n+1)}\longrightarrow \Sigma^n$ are bounded we can assume that  $\delta_z: \F^{(n)}\longrightarrow \Sigma^{n}$ are bounded. As explained in Section~\ref{sec:entwining}, the successive derivatives $\delta_z^{(k)}:\F\longrightarrow \Sigma$ are all bounded. Pick $(f_n,\dots,f_1,f_0)\in\F^{(n+1)}$ and consider the decomposition
$$
(f_n,\dots,f_1, f_0)=  (f_n,\dots,f_1, f_0)-\tau_{[n,0]}f_0+ \tau_{[n,0]}f_0
$$
We have
$$
\|(f_n,\dots,f_0)\|_{\F^{(n+1)}}= \|(f_n,\dots,f_1)-\tau_{[n,1]}f_0\|_{\Fn}+\|f_0\|_\F.
$$
Also,
\begin{align*}
\|\delta_z(f_n,\dots,f_0)\|_{\Sigma^{n+1}}&\leq \| \delta_z\big{(}(f_n,\dots,f_1)-\tau_{[n,1]}f_0 \big{)}  \|_{\Sigma^n}+ \sum_{0\leq k\leq n} \left\|\frac{f^{(k)}_0(z)}{k!}\right\|_\Sigma\\
&\leq \left\|\delta_z:\F^{(n)}\to \Sigma^{n}\right\|\|(f_n,\dots,f_1)-\tau_{[n,1]}f_0\|_{\Fn} +
\sum_{0\leq k\leq n}  \left\|\frac{\delta^{(k)}}{k!}:\F\to \Sigma\right\|\|f\|_\F,
\end{align*}
which is enough.

Let us check that $\F^{(n+1)}$ is an $A^\infty$-module under pointwise multiplication assuming that so is $\F^{(n)}$.
As a preparation we consider the following general situation. Suppose we have a (topological) algebra $A$ and that $X$ and $Y$ are topological left-modules over $A$. Let $H$ be another $A$-module, not necessarily carrying a topology, that contains $Y$ as a submodule. Finally, suppose $\Phi:X\to H$ is quasilinear from $X$ to $Y$; see Definition~\ref{def:quasilinear}.
 It is very easy to see that
 the ``coordinatewise'' product $a(h,x)=(ah,ax)$ makes $
Y\oplus_\Phi X$ into a topological $A$-module if and only if for every $a\in A$ and $x\in X$ one has $\Phi(ax)-a\Phi(x)\in Y$ and
$$
\|\Phi(ax)-a\Phi(x)\|\To 0\quad\quad\text{as}\quad\quad(a,x)\To 0 \text{ in } A\times X.
$$
As the space $\F^{(n+1)}$ is just  $\F^{(n)}\oplus_\Phi\F$ when $\Phi$ is the quasilinear map (linear in fact) given by $\tau_{[n,1]}:\F\To \mathscr H^{n}$ what we need to prove is that if $f\in\F$ and $a\in A^\infty$, then the difference
$\tau_{[n,1]}(af)-a\tau_{[n,1]}(f)$ falls into $\Fn$ and
\begin{equation}
\|\tau_{[n,1]}(af)-a\tau_{[n,1]}(f)\|_{\Fn}\To 0\quad\quad\text{as}\quad\quad(a,f)\To 0\text{ in } A^\infty\times\F.
\end{equation}
Note that if $f\in\F$, then, for each $k\geq 1$, the array $
\tau_{[k,0]}(f)$ belongs to $\F^{(k+1)}$, with $\|\tau_{[k,0]}(f)\|_{\F^{(k+1)}}=\|f\|_{\F}$ and so every array of the form
$$ \left(\frac{f^{(k)}}{k!},\dots,f', f, 0,\dots,0 \right),
$$
ending with $\ell$ zeroes, belongs to $\F^{(k+\ell+1)}$ and its norm there agrees with $\|f\|_\F$. Fix now $f\in\F, a\in  A^\infty$ and let us compute the difference $\tau_{[n,1]}(af)- a\tau_{[n,1]}(f)$. Note, that, by the Leibniz formula
$$
\frac{(af)^{(k)}}{k!}= \sum_{0\leq i\leq k} \frac{a^{(k-i)}}{(k-i)!}\frac{f^{(i)}}{i!},
$$
so
\begin{align*}
\tau_{[n,1]}(af)&=
\left(\frac{(af)^{(n)}}{n!},\dots,(af)'\right)\\
&=
\underbrace{a\left(\frac{f^{(n)}}{n!},\dots,f'\right)}_{a\tau_{[n,1]}f}+a'\left(\frac{f^{(n-1)}}{(n-1)!},\dots,f, \right)+\frac{a''}{2!}\left(\frac{f^{(n-2)}}{(n-2)!},\dots,0 \right)+\dots+\frac{a^{(n)}}{n!}\left(f,0, \dots, 0 \right)
\end{align*}
Hence
$$
\tau_{[n,1]}(af)-a\tau_{[n,1]}(f)=
a'\left(\frac{f^{(n-1)}}{(n-1)!},\dots,f\right)+\frac{a''}{2!}\left(\frac{f^{(n-2)}}{(n-2)!},\dots,0\right)+\dots+\frac{a^{(n)}}{n!}\left(f,0, \dots, 0 \right),
$$
with each summand in $\Fn$,
and
$$
\|\tau_{[n,1]}(af)-a\tau_{[n,1]}(f)\|_{\Fn}\leq \sum_{1\leq k\leq n}\left\|\frac{a^{(k)}}{k!}\right\|_{L(\F^{(n-k)})}\|f\|_\F.
$$
To complete the proof that $\F^{(n+1)}$ is acceptable  let us assume that  $(f_n,\dots,f_0)\in\mathscr H(\D, \Sigma^{n+1})$ and $\phi\in\Aut(\D)$ are such that $\phi (f_n,\dots,f_0)$ falls into  $\F^{(n+1)}$. We must check that  $(f_n,\dots,f_1,f_0)$ belongs to  $\F^{(n+1)}$ and that
$$
\|(f_n,\dots,f_1,f_0)\|_{\F^{(n+1)}}\leq K[\phi,n+1] \|\phi(f_n,\dots,f_1,f_0)\|_{\F^{(n+1)}},
$$
where $K[\phi,n+1]$ is a constant depending on $\phi$ and the ``dimension'' only. The hypothesis means that $\phi f_0\in\F$ (hence $f_0\in\F$) and $\phi(f_n,\dots,f_1)-\tau_{[n,1]}(\phi f_0)\in\Fn$.
On the other hand, since $\phi\in A^\infty$ (see the Appendix), we know from the previous step that the difference
$\tau_{[n,1]}(\phi f_0)- \phi\tau_{[n,1]}(f_0)$ belongs to $\Fn$. Thus,
$$
\phi(f_n,\dots,f_1)-\phi\tau_{[n,1]}(f_0)\in\Fn
$$
and the induction step yields $(f_n,\dots,f_1)-\tau_{[n,1]}(f_0)\in\Fn$, hence
$(f_n,\dots,f_1, f_0)\in\F^{(n+1)}$.\medskip

As for the norm, one has
\begin{align*}
\|(f_n,\dots,f_1, & f_0)\|_{\F^{(n+1)}}=\|(f_n,\dots,f_1)-\tau_{[n,1]}(f_0)\|_{\Fn}+\|f_0\|_\F\\
&\leq K[\phi,n] \|\phi(f_n,\dots,f_1)-\phi\tau_{[n,1]}(f_0)\|_{\Fn}+K[\phi,1]\|\phi f_0\|_\F\\
&\leq K[\phi,n] \left(\|\phi(f_n,\dots,f_1)-\tau_{[n,1]}(\phi f_0)\|_{\Fn} + \|\tau_{[n,1]}(\phi f_0)- \phi\tau_{[n,1]}(f_0)\|_{\Fn} \right)+K[\phi,1]\|\phi f_0\|_\F\\
&\leq \max(K[\phi,n],K[\phi,1])\|\phi(f_n,\dots,f_1,f_0)\|_{{\F^{(n+1)}}}+ K[\phi, n]
\sum_{1\leq k\leq n}\left\|\frac{\phi^{(k)}}{k!}\right\|_{L(\F^{(n-k)})}\|f_0\|_\F,
\end{align*}
which is enough as it implies that
$$
K[\phi,n+1]\leq \max\Big{(}K[\phi,n],K[\phi,1]\Big{)}+ K[\phi,n]K[\phi,1]\sum_{1\leq k\leq n}\left\|\frac{\phi^{(k)}}{k!}\right\|_{L(\F^{(n-k)})}.
$$

Finally, we prove the ``moreover'' part. For each $k\geq 1$ let $(\F^{(k)})_z$ denote the analytic family induced by $\F^{(k)}$, while
we keep the notation $\F^{(k)}_z$ for the $k$-th Rochberg space induced by $\F$ at $z$. In particular:
\begin{align*}
(\F^{(n+1)})_z&=\{x\in \Sigma^{n+1}: x=F(z) \text { for some } F\in \F^{(n+1)} \};\\
\F^{(n+1)}_z&=\{x\in \Sigma^{n+1}: x=\tau_{[n,0]}f(z) \text { for some } f\in \F\}.
\end{align*}

Now, if $f\in\F$, then the array $F=\tau_{[n,0]}(f)$ belongs to $\F^{(n+1)}$ by the very definition, and evaluating at $z$ one obtains the Taylor coefficients of $f$. Besides, $\|\tau_{[n,0]}(f)\|_{\F^{(n+1)}}=\|f\|_{\F}$, hence $(\F^{(n+1)})_z$ contains $\F^{(n+1)}_z$ and the inclusion is contractive. To establish the other containment, one has to check that if $(f_n,\dots,f_0)$ belongs to $\F^{(n+1)}$ then, for each $z\in\D$, there is $f\in \F$ such that
$$
f_k(z)=\frac{f^{(k)}(z)}{k!}\quad\quad(0\leq k\leq n)
$$
with $\|f\|_\F\leq M\|(f_n,\dots,f_0)\|_{\F^{(n+1)}}$, where $M=M[z,n+1]$ depends only on the dimension and on $z$, but not on the array. So, fix $z\in\D$ and pick $(f_n,\dots,f_0)$ in $\F^{(n+1)}$.
Then since the array
$(f_n,\dots,f_1)-\tau_{[n,1]}f_0$ belongs to $\Fn$ we can assume by the induction hypothesis that there is $g\in\F$ such that
\begin{equation}\label{eq:gz}
g(z)=f_1(z)-f_0'(z), \dots, \frac{g^{(n-1)}(z)}{(n-1)!}=f_n(z)- \frac{f_0^{(n)}(z)}{n!},
\end{equation}
with $\|g\|_\F\leq M[z,n]\left\|(f_n,\dots,f_1)-\tau_{[n,1]}f_0\right\|_{\Fn}$.
Take $\phi\in\Aut(\D)$ vanishing at $z$ and use \cite[Lemma~1]{cck} to get a polynomial $P$ of degree at most $n$  so that if $a=P(\phi)$,
then $a^{(k)}(z)=\delta_{k1}$ (Kronecker delta) for $0\leq k\leq n$. Obviously, $a\in A^\infty$ and so $f=ag+f_0\in\F$. We have
\begin{align*}
\|f\|_\F&\leq \|a\|_{L(\F)}\|g\|_\F+\|f_0\|_\F\\
&\leq
 \|a\|_{L(\F)}  M[z,n]\left\|\left(f_n,\dots,f_1\right)-\tau_{[n,1]}f_0\right\|_{\Fn}+\|f_0\|_\F\\
 &\leq  \max\left( \|a\|_{L(\F)}  M[z,n], 1\right)\|(f_n,\dots,f_0)\|_{\F^{(n+1)}}.
\end{align*}
As for the Taylor coefficients, by Leibniz rule and (\ref{eq:gz}),
$$
\frac{f^{(k)}(z)}{k!}=\frac{f_0^{(k)}{(z)}}{k!}+ \sum_{0\leq i\leq k}\frac{a^{(i)}(z)}{i!}
\frac{g^{(k-i)}(z)}{(k-i)!}=
\frac{f_0^{(k)}{(z)}}{k!}+  \frac{g^{(k-1)}(z)}{(k-1)!}=f_k(z).\qedhere
$$
\end{proof}

\subsection{General domains}\label{sec:general} We transplant our results from the disc to general domains.
The main obstruction to proceed as we did in Proposition~\ref{prop:Fnaccep} is that
the grafted algebras $\AU$ are not closed under differentiation, even if $\U$ is a strip (see the Appendix). Therefore, most of the computations done along points 5 and 6 of that proof just do not make any sense for general domains. The idea is then to use a conformal map between $\U$ and $\D$ to transfer the acceptable space $\F$ from $\U$ to $\D$, then use Proposition~\ref{prop:Fnaccep} and then move back to $\U$. This involves the most basic operations in calculus: Chain and Leibniz rule. The paper \cite{rochtrans} contains much deeper ``translations'' to vector valued analytic functions of much deeper facts about complex analytic functions.

\subsubsection{Chain rule}\label{sec:chainrule}
Let $\F$ be an acceptable space on $\U$ and suppose $\psi:\mathbb V\to\U$ is a conformal equivalence. Then we can consider the space
$$
\mathscr G=\psi^*[\F]=\{g\in\mathscr H(\mathbb V,\Sigma): g=f\circ\psi, f\in\F\},
$$
with norm $\|g\|_{\mathscr G}=\|f\|_\F$. It is clear that $\G$ is acceptable, or admissible if $\F$ is. In some sense, $\G$ and $\F$ are ``equivalent'' objects. This is indeed the case for the ``degree zero'' theory as shown by the fact that, for each $z\in\mathbb V$, one has $\mathscr G_z=\F_{\psi(z)}$, with identical norms. We omit the obvious proof.\medskip

What about the corresponding Rochberg spaces? They are still isometric but, in general, different. To see this, fix $z\in\mathbb V$ and put $u=\psi(z)$. Take $(x_1,x_0)\in \F_u^{(2)}$ and pick $f\in\F$ so that $x_1=f'(u), x_0=f(u)$. Then take $g=f\circ\psi$ and evaluate $\tau_{[1,0]}g$ at $z$:
$$(g'(z),g(z))=(f'(u)\psi'(z), f(u))= (\psi'(z)x_1,x_0).$$
This shows at once:
\begin{itemize}
\item
The map $(x_1,x_0)\mapsto  (\psi'(z)x_1,x_0)$ is a surjective isometry between  $ \F_u^{(2)}$  and $ \G_z^{(2)}$.
\item If $\psi'(z)\neq 1$, then $\F_u^{(2)}=\G_z^{(2)}$ as subspaces of $\Sigma^2$ if and only if $\F_u^{(2)}=\F_u\times\F_u$.
\item It $\lambda=\psi'(z)$, then we have a commutative diagram (recall that $\F_u$ and $\G_z$ are the same space)
$$
\begin{CD}
0 @>>> \F_u @>>> \F_u^{(2)} @>>> \F_u@>>> 0\\
 & & @V\lambda VV @V\lambda\times{\bf 1}VV  @| \\
0@>>> \G_z @>>> \G_z^{(2)} @>>> \G_z @>>> 0
\end{CD}
$$
in which the middle arrow is an isometry.\medskip
\end{itemize}

In general we can describe nice isometries between  $\F_u^{(n)}$ and $\G_z^{(n)}$ as follows. Take $x\in \F_u^{(n)}$ and let $f\in \F$ be a representative, that is, $x=\tau_{(n,0]}f(u)$. Set $g=f\circ\psi$ and put $y=\tau_{(n,0]}g(z)$. It is clear that $y$ depends only on $x$ (if $f$ has a zero of order $k$ at $u$, then $g$ has a zero of order $k$ at $z$, and vice versa) and that this correspondence defines a surjective isometry between $\F_u^{(n)}$ and $\G_z^{(n)}$ that we may denote by $L[n,u]$ thus emphasizing the fact that it depends on the base point. To understand the dependence between the input $x=(x_{n-1},\dots,x_0)$ and the output
$y=(y_{n-1},\dots,y_0)$ we can invoke Fa\`a di Bruno's formula (see \cite{faa} for an exposition).
Write
$$
f(v)=\sum_{m\geq 0}x_m(v-u)^m\quad\quad\text{and}\quad\quad
\psi(w)=\sum_{m\geq 0}z_m(w-z)^m
$$
with positive radii of convergence. Then
$$
g(w)=f(\psi(w))=\sum_{m\geq 0}y_m(w-z)^m,
\quad\quad\text{
with
}\quad\quad
y_m=\sum_{(b_1,\dots,b_m)}\frac{z_1^{b_1}}{b_1 !}\cdots \frac{z_m^{b_m}}{b_m !}\: k!\:x_k,
$$
where the sum is taken over all different solutions $(b_1,\dots,b_m)$ of the equation
$b_1+2b_2+\cdots+mb_m=m$ in which each $b_i$ is a nonnegative integer and $k=b_1+b_2+\cdots+b_m$; in particular $k\leq m$. Hence, each $L[n,u]$ is implemented by an upper triangular matrix with complex coefficients that we will denote $\mathbf{FdB}$, with the understanding that $\mathbf{FdB}$ depends on $n, u$ and $\psi$.\medskip

Take $n,k\geq 1$ and let $\pi_{n+k,n}:\Sigma^{n+k}\To \Sigma^n$ denote the projection onto the last $n$ coordinates. Clearly, $L[n,u]\circ \pi_{n+k,n}= \pi_{n+k,n}\circ L[n+k,u]$, so $ L[n+k,u]$ maps the kernel of  $ \pi_{n+k,n}: \F_u^{(n+k)}\longrightarrow \F_u^{(n)}$ onto that of  $ \pi_{n+k,n}: \G_u^{(n+k)}\longrightarrow \G_u^{(n)}$ and we have a commutative diagram
\begin{equation}\label{dia:FuGz}
\begin{CD}
0 @>>> \F_u^{(k)} @>>> \F_u^{(n+k)} @>>> \F_u^{(n)}@>>> 0\\
 & & @V I VV @VL[n+k,u]VV  @VV L[n,u]  V\\
0@>>> \G_z^{(k)} @>>> \G_z^{(n+k)} @>>> \G_z^{(n)} @>>> 0
\end{CD}
\end{equation}
in which $I$ is an isomorphism, depending on $n, k$ and $u$, in general different from $L[k,u]$.\medskip

Moral: If you are interested in twisted sums, Banach space properties of the derived spaces and the like you can change variables without causing any harm to your conclusions. If you are rather interested in interpolation spaces, interpolation of operators and the like, you should be careful.

\subsubsection{Leibniz rule}
The preceding considerations suggest the following formal procedure to correct the distorsion introduced by a change of variable. Let $\F$ be an admissible/acceptable space of analytic functions from $\U$ to $\Sigma$ and suppose $L:\U\longrightarrow\Aut(\Sigma)$ is analytic when $\Aut(\Sigma)$ carries the restriction of the norm topology of $L(\Sigma)$. We can define a weighted version of $\F$, denoted $L_*[\F]$ with a slight abuse of notation, taking those functions $g:\U\longrightarrow \Sigma$ of the form $g(z)=L(z)(f(z))$, for some $f:\U\longrightarrow \Sigma$, with norm
 $
 \|g\|_{L_*[\F]}= \|f\|_\F
 $.
It is clear that  $L_*[\F]$ is admissible/acceptable if and only if $\F$ is. Moreover, for each $z\in\U$, one has ${L_*[\F]}_z=L(z)[\F_z]$ and that $L(z): \F_z\longrightarrow {L_*[\F]}_z$ is a surjective isometry.\medskip

 The connection between the Rochberg spaces of $\F$ and those of ${L_*[\F]}$ is as follows. Suppose $(x_{n-1},\dots, x_0)\in \Sigma^n$ belongs to $\Fn_z$ and that it agrees with the evaluation of $\tau_{[n-1,0]}(f)$ at $z$. Then $g(\zeta)=L(\zeta)(f(\zeta))$ belongs to $L_*[\F]$ and since by Leibniz's rule
 $$
 \frac{g^{(k)}(z)}{k!}=\sum_{0\leq i\leq k}\frac{L^{(k-i)}(z)}{(k-i)!}\left(\frac{f^{(i)}(z)}{i!}\right)
 $$
 we see that the isometry between $\Fn_z$ and ${L_*[\F]}_z^{(n)}$ is implemented by the following operator valued matrix
 evaluated at $z$
 $$
 \left( \begin{matrix}
\frac{L^{(n-1)}}{(n-1)!} & \frac{L^{(n-2)}}{(n-2)!}& \dots  & L' & L\\
 0 & \frac{L^{(n-2)}}{(n-2)!}& \dots & L'  & L\\
 \vdots & \vdots & \ddots &\vdots &\vdots\\
 0 & 0 & \dots & L'  & L\\
 0 & 0 & \dots & 0  & L\\
 \end{matrix}
 \right)
 $$

We are ready to state the conclusion of all this:

\begin{theorem}\label{th:generalU} Let $\F$ be an acceptable space of analytic functions $\U\To \Sigma$. For every $n\geq 2$ there exists an acceptable space $\mathscr T$ of analytic functions $\U \To \Sigma^n$ with the following properties:
\begin{itemize}
\item For every $f\in\F$, the array $\tau_{(n,0]}f:\U\To \Sigma^n$ belongs to $\mathscr T$, and $\|\tau_{(n,0]}f\|_{\mathscr T}=\|f\|_{\F}$.
\item For every $u\in\U$ one has $\mathscr T_u=\Fn_u$, with equivalent norms.
\end{itemize}
\end{theorem}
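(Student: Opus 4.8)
Our strategy is to transplant Proposition~\ref{prop:Fnaccep} from the disc to $\U$ by conjugating with a conformal map, using the Chain and Leibniz rules of Section~\ref{sec:general} to repair the resulting distortion. Fix a conformal equivalence $\psi:\D\To\U$ and write $\varphi=\psi^{-1}:\U\To\D$. First we carry $\F$ over to the disc: the space $\G=\psi^*[\F]=\{g\in\mathscr H(\D,\Sigma):g=f\circ\psi,\ f\in\F\}$, normed by $\|g\|_\G=\|f\|_\F$, is acceptable on $\D$ (Section~\ref{sec:chainrule}). Proposition~\ref{prop:Fnaccep} now applies verbatim and furnishes an acceptable space $\G^{(n)}$ of analytic functions $\D\To\Sigma^n$ such that $\tau_{(n,0]}g\in\G^{(n)}$ with $\|\tau_{(n,0]}g\|_{\G^{(n)}}=\|g\|_\G$ for all $g\in\G$, and whose analytic family is the Rochberg family $(\G^{(n)}_z)_{z\in\D}$, up to equivalent norms.

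Next we pull $\G^{(n)}$ back to $\U$. Put $\mathscr T_0=\varphi^*[\G^{(n)}]=\{h\in\mathscr H(\U,\Sigma^n):h=G\circ\varphi,\ G\in\G^{(n)}\}$, with $\|h\|_{\mathscr T_0}=\|G\|_{\G^{(n)}}$. Again by Section~\ref{sec:chainrule}, $\mathscr T_0$ is acceptable on $\U$ and, for every $u\in\U$, $(\mathscr T_0)_u=(\G^{(n)})_{\varphi(u)}=\G^{(n)}_{\varphi(u)}$ as subsets of $\Sigma^n$, with equivalent norms. This is almost $\F^{(n)}_u$, but not quite: as recorded in Section~\ref{sec:chainrule}, for each $u\in\U$ the map $L[n,u]:\F^{(n)}_u\To\G^{(n)}_{\varphi(u)}$, $\tau_{(n,0]}f(u)\mapsto\tau_{(n,0]}(f\circ\psi)(\varphi(u))$, is a surjective isometry implemented by the Fa\`a di Bruno matrix $\mathbf{FdB}(u)$, an upper triangular $n\times n$ array of scalars (acting on $\Sigma^n$ in the obvious way) whose entries are polynomials in the Taylor coefficients of $\psi$ at $\varphi(u)$ and whose diagonal is $\big(\psi'(\varphi(u))^{n-1},\dots,\psi'(\varphi(u)),1\big)$. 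In particular $\mathbf{FdB}(u)\in\Aut(\Sigma^n)$, its entries are analytic functions of $u$, and hence $u\mapsto\mathbf{FdB}(u)^{-1}$ is an analytic map $\U\To\Aut(\Sigma^n)$ for the norm topology of $L(\Sigma^n)$.

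We then define $\mathscr T=L_*[\mathscr T_0]$ with the operator weight $L(u)=\mathbf{FdB}(u)^{-1}$, as in the Leibniz rule part of Section~\ref{sec:general}. Weighting preserves acceptability, so $\mathscr T$ is acceptable on $\U$. For every $u\in\U$ one has $\mathscr T_u=L(u)\big[(\mathscr T_0)_u\big]=\mathbf{FdB}(u)^{-1}\big[\G^{(n)}_{\varphi(u)}\big]=\F^{(n)}_u$ as subsets of $\Sigma^n$; composing the isometries $L(u)$ and $L[n,u]^{-1}=\mathbf{FdB}(u)^{-1}$ with the norm equivalence $(\mathscr T_0)_u=\G^{(n)}_{\varphi(u)}$ shows that this identification holds with equivalent norms, which is the second bullet. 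For the first bullet, given $f\in\F$ set $g=f\circ\psi\in\G$; by the Fa\`a di Bruno formula $\tau_{(n,0]}(g)(\varphi(u))=\mathbf{FdB}(u)\big[\tau_{(n,0]}(f)(u)\big]$, so $h_0:=(\tau_{(n,0]}g)\circ\varphi$ lies in $\mathscr T_0$ and $L(u)(h_0(u))=\tau_{(n,0]}f(u)$ for all $u$; hence $\tau_{(n,0]}f=L_*(h_0)\in\mathscr T$ with $\|\tau_{(n,0]}f\|_{\mathscr T}=\|h_0\|_{\mathscr T_0}=\|\tau_{(n,0]}g\|_{\G^{(n)}}=\|g\|_\G=\|f\|_\F$.

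The one point requiring genuine care is the verification in the second paragraph that the correcting weight $u\mapsto\mathbf{FdB}(u)^{-1}$ takes values in $\Aut(\Sigma^n)$ and depends analytically on $u$ in the norm topology, so that the Leibniz-rule construction is legitimate; everything else is bookkeeping combining the Chain rule, the Leibniz rule and Proposition~\ref{prop:Fnaccep}. Note that no boundary regularity of $\psi$ is used, since acceptability is a purely interior notion.
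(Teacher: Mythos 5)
Your proof is correct and follows essentially the same route as the paper: transfer $\F$ to the disc via $\psi^*$, apply Proposition~\ref{prop:Fnaccep} to get $\G^{(n)}$, and correct the chain-rule distortion by weighting with the inverse Fa\`a di Bruno matrices before (or, in the paper, after) transferring back to $\U$; the two orderings of pullback and weighting produce the same space $\mathscr T$. The only difference is cosmetic, so nothing further is needed.
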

\begin{proof}
Fix a conformal map $\psi:\mathbb D\To\U$ and let $\mathscr G=\psi^*[\F]$. Then $\mathscr G$ is acceptable on $\D$ and $\F_u=\mathscr G_z$, where $u=\psi(z)$. If  $\mathscr G^{(n)}$ is  the space provided by Proposition~\ref{prop:Fnaccep}, we have:
\begin{itemize}
\item $\mathscr G^{(n)}$ is an acceptable space of $\Sigma^n$-valued functions on the disc.
\item The analytic family induced to $\mathscr G^{(n)}$ is $\mathscr G^{(n)}_z$, up to equivalence of norms.
\item If $g\in\mathscr G$, then $\tau_{(n,0]}g$ belongs to $\mathscr G^{(n)}$, and $\|\tau_{(n,0]}g\|_{\mathscr G^{(n)}}=\|g\|_{\mathscr G}$.
\end{itemize}
Moreover, we know from Section~\ref{sec:chainrule} that there is an analytic mapping $L(n,\cdot):\U\To M[n]$, the space of $n\times n$ matrices with complex coefficients, such that, if $u=\psi(z), f\in\F, g=f\circ\psi$, then
$$
\tau_{(n,0]}g(z)=L(n,u)\big{(}(\tau_{(n,0]}f)(u)\big{)}.
$$
Each $L(n,u)$ is upper triangular and invertible and restricts to a surjective isometry between $\F^{(n)}_u$ and $\G^{(n)}_z$ and so to an isomorphism from $\F^{(n)}_u$ to $(\G^{(n)})_z$.
Now, we continue with this $n$ fixed, and define $M:\D\To M[n]$ by $M(z)= L(n,\psi(z))^{-1}$. Consider the space
$$
M_*[\G^{(n)}]=\left\{H\in\mathscr H(\D,\Sigma^n): H(w)=M(w)(G(w)), \text{ with } G\in\G^{(n)}\right\}.
$$
It should be obvious by now that $
M_*[\G^{(n)}]$ is an acceptable space on the disc and also that $(M_*[\G^{(n)}])_z=
\F^{(n)}_u$, with equivalent norms, where $u=\psi(z)$. Finally, set $\mathscr T=(\psi^{-1})[M_*[\G^{(n)}]]$ and check the details.
\end{proof}

There is a puzzling fact in that one is much less interested in which are the spaces $\mathscr T$ appearing in Theorem \ref{th:generalU} than  in their mere existence. Indeed, $\mathscr T$ has been constructed to provide a framework that legitimates the manipulations we will perform next. On the other hand, the formalism developed in this paper for acceptable spaces is rather satisfactory in the sense that produces, under minimal hypotheses, both the Rochberg spaces and the process to derive them. A reader interested in interpolation theory could miss some concrete applications beyond Section \ref{secondaxis}. The main obstacle to derive ``classical" interpolation results from the material in Sections~\ref{sec:disc} and \ref{sec:general} is that, while admissible interpolation families lead to admissible spaces of analytic functions in the way explained in Section~\ref{sec:int-fam}, we do not know how to travel the way back, if there is a way back. Precisely, assume that $\F$ is an admissible space on the disc and let us fix $0<r<1$. Under which conditions one can guarantee that the spaces $(\F_z)_{|z|=r}$ form an interpolation family so that a new admissible space $\mathscr X$ can be eventually formed? And, if so, do the new interpolation spaces $(\mathscr X_z)_{|z|<r}$ agree with the old ones $\F_z$?

\section{Derivation of Rochberg families}\label{sec:rambling}

Let $\F$ be an acceptable space on $\U$. Fix  $m\geq 2$ and let $\mathscr T$ be the space provided by Theorem~\ref{th:generalU}
so that $\mathscr T^{(1)}_z = \Fm_z$; the fact that $\mathscr T$ depends on the choice of a conformal map does not affect the ensuing considerations. Since $\mathscr T$ is acceptable, given any integer $n\geq 2$ one can construct the corresponding Rochberg spaces $\mathscr T^{(n)}_z$ and the associated exact sequences (\ref{poz}) they naturally form. This section makes the first steps in the study of these objects. While our knowledge on this issue is very limited, the general impression is that one arrives to certain degenerate versions of the Rochberg spaces generated by the original $\F$.

Let us agree on the following notations.
For fixed  $m\geq 1$, if $\mathscr T$ is the space provided by Theorem~\ref{th:generalU}
so that $\mathscr T_z= \Fm_z$ for all $z\in\mathbb U$. Let us fix $z\in \mathbb U$ for the remainder of the section, write $F[m,n] = \mathscr T_z^{(n)}$ and rename the exact sequences entwinning the successive Rochberg spaces of $\mathscr T$ as
\begin{equation}\label{eq:mnk}\begin{CD}
0@>>> F[m,n] @>\imath_{n,n+k}^m>> F[m,n+k]@> \pi_{n+k,k}^m >> F[m,k]@>>> 0
\end{CD}
\end{equation}

We describe the elements of $F[m,n]$ by means of $m\times n$-matrices with entries in the ambient space $\Sigma$ as follows. Each function in $\mathscr T$ can be written as $F=(f_{m-1},\dots,f_0)$ where $f_j:\U\To \Sigma$ are certain analytic functions. Thus, a typical element of $F[m,n]$ arises by evaluation of the following array of functions
 $$
 \left( \begin{matrix}
\frac{F^{(n-1)}}{(n-1)!} \\
 \frac{F^{(n-2)}}{(n-2)!}\\
 \vdots \\
 F'\\
F\\
 \end{matrix}
 \right)
 =
 \left( \begin{matrix}
\frac{f_{m-1}^{(n-1)}}{(n-1)!} & \frac{f_{m-2}^{(n-1)}}{(n-1)!} & \dots  &  \frac{f_1^{(n-1)}}{(n-1)!} &  \frac{f_0^{(n-1)}}{(n-1)!}\\

\frac{f_{m-1}^{(n-2)}}{(n-2)!}  &\frac{f_{m-2}^{(n-2)}}{(n-2)!}& \dots & \frac{f_{1}^{(n-2)}}{(n-2)!}  & \frac{f_0^{(n-2)}}{(n-2)!}\\
 \vdots & \vdots & \ddots &\vdots &\vdots\\
  f'_{m-1} &   f'_{m-2}  & \dots & f_1'  & f_0'\\
 f_{m-1} &   f_{m-2}  & \dots & f_1  & f_0\\
 \end{matrix}
 \right)
 $$
at $z$. There is a quite natural operator $E_{m,n}:\F^{(m+n-1)}_z\To F[m,n]$. To see which one is, pick $x=(x_{m+n-2}, \dots, x_0)$ in $\F^{m+n-1}_z$. Let $f\in\F$ be an extremal for $x$ so that $x=\tau_{(m+n-1,0]}f(z)$ and put $F(\cdot)=\tau_{(m,0]}f(\cdot)$. Then $F\in\mathscr T$ and (the transpose of) $\tau_{(n,0]}F$ is
$$
 \left( \begin{matrix}
\frac{F^{(n-1)}}{(n-1)!} \\
 \frac{F^{(n-2)}}{(n-2)!}\\
 \vdots \\
 F'\\
F\\
 \end{matrix}
 \right)
 =
 \left( \begin{matrix}
\frac{f^{(m-1+n-1)}}{(m-1)!(n-1)!} & \frac{f^{(m-2+n-1)}}{(m-2)!(n-1)!} & \dots  &  \frac{f_1^{(n-1)}}{(n-1)!} &  \frac{f^{(n-1)}}{(n-1)!}\\

\frac{f^{(m-1+n-2)}}{(m-1)!(n-2)!}  &\frac{f^{(m-2+n-2)}}{(m-2)!(n-2)!}& \dots & \frac{f_{1}^{(n-2)}}{(n-2)!}  & \frac{f^{(n-2)}}{(n-2)!}\\
 \vdots & \vdots & \ddots &\vdots &\vdots\\
 \frac{1}{(m-1)!}f^{(m)}  &   \frac{1}{(m-2)!}f^{(m-1)}  & \dots & f''  & f'\\
\frac{1}{(m-1)!}f^{(m-1)}  &   \frac{1}{(m-2)!}f^{(m-2)}  & \dots & f'  & f\\
 \end{matrix}
 \right)
$$
Evaluating at $z$ we obtain
$$
E_{m,n}(x)=\left( \begin{matrix}
\frac{(m-1+n-1)!}{(m-1)!(n-1)!}x_{m-1+n-1} & \frac{(m-2+n-1)!}{(m-2)!(n-1)!}x_{m-2+n-1} & \dots  &  n x_n &  x_{n-1}\\
\frac{(m-1+n-2)!}{(m-1)!(n-2)!} x_{m-1+n-2}  &\frac{(m-2+n-2)!}{(m-2)!(n-2)!}x_{m-2+n-2}& \dots &(n-1)x_{n-1} & x_{n-2}\\
 \vdots & \vdots & \ddots &\vdots &\vdots\\
mx_m  &  (m-1)x_{m-1}  & \dots & 2x_2 & x_1\\
 x_{m-1}   &   x_{m-2}  & \dots & x_1  & x_0\\
 \end{matrix}
 \right)=\left( \frac{(i+j)!}{i!j!}x_{i+j}\right)_{n > i \ge 0, m> j \ge 0}
$$
It is clear that each $E_{m,n}$ is injective and continuous.
We shall see very soon that $E_{m,n}$ is an embedding with complemented range if $m$ or $n$ is $2$. To this end we need the following remark that implicitly concerns the pushout construction. We apologize for the tendentious notation.

\begin{lemma}\label{lem:KYZ}
Let $Z$ be a Banach space and let $K$ and $Y$ be closed subspaces of $Z$, with $K\subset Y$. Assume one has another Banach space $\PO$ and a commutative diagram
$$
\xymatrixcolsep{3pc}
\xymatrix{
0\ar[r] & K  \ar[r]^{\text{\rm inclusion}} \ar[d]_{\text{\rm inclusion}} & Z \ar[r]^{\text{\rm quotient}}  \ar[d]^E & Z/K \ar[r] \ar@{=}[d] & 0\\
0\ar[r] & Y  \ar[r]^{J} & \PO \ar[r]^Q & Z/K \ar[r] & 0
}
$$
with exact rows.
Then $E$ is an embedding with complemented range and $\PO/E[Z]$ is isomorphic to $Y/K$. In particular $\PO$ is isomorphic to $Y/K\oplus Z$.
\end{lemma}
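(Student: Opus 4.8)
The plan is to recognize $\PO$, through its defining diagram, as the pushout of the two inclusions $K\hookrightarrow Y$ and $K\hookrightarrow Z$, and then to manufacture an explicit bounded retraction for $E$.

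First I would note that $E$ is injective: if $E(z)=0$ then $QE(z)=0$, so $z\in K$ by exactness of the top row, and then commutativity of the left square gives $E(z)=J(z)$; since $J$ is an embedding, $z=0$. Next I would introduce the bounded operator $\Psi:Y\oplus Z\To\PO$, $\Psi(y,z)=J(y)+E(z)$. Exactness of the bottom row ($Q$ surjective, $\ker Q=J[Y]$) together with $QE=q$ (the quotient map $Z\To Z/K$, which holds by commutativity of the right square) yields, by a routine chase, that $\Psi$ is onto and $\ker\Psi=\{(-k,k):k\in K\}$, a closed subspace isometric to $K$. The open mapping theorem then gives a linear homeomorphism $(Y\oplus Z)/\ker\Psi\cong\PO$, under which $E[Z]=\Psi[\{0\}\oplus Z]$ corresponds to $(K\oplus Z)/\ker\Psi$; this is the image of a closed subspace of $Y\oplus Z$ that contains $\ker\Psi$, hence closed in $\PO$, and the induced continuous bijection $Z\To E[Z]$ is an isomorphism by the open mapping theorem, so $E$ is an isomorphic embedding.

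For the complementation and the quotient, I would define $R:\PO\To Z$ by $R\big(\Psi(y,z)\big)=y+z$, reading $y$ inside $Z$ via $K\subset Y\subset Z$. This is well defined: if $\Psi(y,z)=\Psi(y',z')$ then $J(y-y')=E(z'-z)$; applying $Q$ forces $z'-z\in K$, whence $E(z'-z)=J(z'-z)$ and injectivity of $J$ on $Y$ gives $y-y'=z'-z$, i.e.\ $y+z=y'+z'$. It is bounded because $\Psi$ is a quotient map, so every $p\in\PO$ has a representative $(y,z)$ with $\|y\|+\|z\|\le C\|p\|$. Since $RE={\bf I}_Z$, the operator $P:=ER$ is a bounded projection of $\PO$ onto $E[Z]$, so $E[Z]$ is complemented and $\PO\cong E[Z]\oplus\ker R$. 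Finally, $y\mapsto\Psi(y,-y)$ is a bounded surjection of $Y$ onto $\ker R$ with kernel exactly $K$, so one more application of the open mapping theorem gives $\ker R\cong Y/K$; therefore $\PO\cong Z\oplus Y/K$, and since $E$ is an embedding, $\PO/E[Z]\cong\ker R\cong Y/K$.

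The argument is essentially bookkeeping; the only points demanding care are the several appeals to the open mapping theorem (to realize $\PO$ as the concrete pushout quotient, to see that $E$ is bounded below, to identify $\ker R$ with $Y/K$, and to get boundedness of $R$) and checking that $E[Z]$ is closed. I do not foresee any genuine obstacle.
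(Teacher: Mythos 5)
Your proof is correct and follows essentially the same route as the paper: you work with the open surjection $(y,z)\mapsto J(y)+E(z)$ from $Y\oplus Z$ onto $\PO$, whose kernel $\{(-k,k):k\in K\}$ and complementary copy $\{J(y)-E(y):y\in Y\}$ of $Y/K$ are exactly the ingredients of the paper's isomorphism $U(y+K,z)=J(y)+E(z-y)$; indeed your retraction $R(J(y)+E(z))=y+z$ is precisely the second coordinate of the paper's inverse $V$. The only differences are presentational: you verify directly (injectivity, closed range, boundedness of $R$) what the paper delegates to the three-lemma and to "check, check, check."
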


\begin{proof} The three-lemma tell us that $E$ is an embedding and after a short reflection on the meaning of the operator $Q$ one realizes that $\PO=J[Y]+E[Z]$, so that $(y,z)\longmapsto J(y)+E(z)$ is open from $Y\oplus Z$ onto $\PO$. Define $U:Y/K\oplus Z\To \PO$ letting $U(y+K,z)=J(y)+E(z-y)$, which is an operator whose inverse can be obtained as follows: given $x\in\PO$ take $y\in Y$ and $z\in Z$ such that $x=J(y)+E(z)$ and set $V(x)=(y+K, z+y)$. Check, check, check.
\end{proof}

The copies of $Z$ and $Y/K$ inside $\PO$ that arise by restricting $U$ to each ``factor'' are obvious: the restriction of $U$ to $Z$ is just $E$; as for $Y/K$ one has
\begin{equation}\label{eq:U=J-E}
U(y+K,0)=J(y)-E(y)
\end{equation}
which depends only on the class of $y$ in $Y/K$ since $J$ and $E$ agree on $K$.

\begin{proposition}
For each $m\geq 1$ and $z\in\mathbb U$ the operator $E_{m,2}:\F^{(m+1)}_z\To F[m,2]$ is an embedding with complemented range and the quotient of $F[m,2]$ by $E_{m,2}[\F^{(m+1)}_z]$ is isomorphic to $\F^{(m-1)}_z$.
\end{proposition}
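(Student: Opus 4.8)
The plan is to deduce everything from Lemma~\ref{lem:KYZ}, applied with $Z=\F^{(m+1)}_z$, $\PO=F[m,2]$ and $E=E_{m,2}$. So I must produce closed subspaces $K\subset Y\subset \F^{(m+1)}_z$ and an embedding $J:Y\to F[m,2]$ making the relevant diagram commute with exact rows. I would take
$$
K=\imath_{1,m+1}[\F^{(1)}_z]=\{(x_m,0,\dots,0):x_m\in\F^{(1)}_z\},\qquad
Y=\imath_{m,m+1}[\F^{(m)}_z]=\{(x_m,\dots,x_1,0):(x_m,\dots,x_1)\in\F^{(m)}_z\}.
$$
Both are closed because $\imath_{1,m+1}$ and $\imath_{m,m+1}$ are isomorphic embeddings (see (\ref{eq:nn+kk})), and $K\subset Y$ since $\imath_{m,m+1}\circ\imath_{1,m}=\imath_{1,m+1}$. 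As the top row I would use $0\to K\to \F^{(m+1)}_z\xrightarrow{\pi_{m+1,m}}\F^{(m)}_z\to 0$, which is exact because $\ker\pi_{m+1,m}=K$; as the bottom row I would use the canonical sequence of $\mathscr T$ at $z$, that is $0\to \F^{(m)}_z\xrightarrow{\imath^m_{1,2}} F[m,2]\xrightarrow{\pi^m_{2,1}}\F^{(m)}_z\to 0$ (recall $F[m,1]=\mathscr T_z=\F^{(m)}_z$).

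First I would check commutativity of the right-hand square, i.e. $\pi^m_{2,1}\circ E_{m,2}=\pi_{m+1,m}$: this is read off directly from the matrix description of $E_{m,2}$, whose bottom (the row $i=0$) is precisely $(x_{m-1},\dots,x_0)=\pi_{m+1,m}(x)$ and is the coordinate selected by $\pi^m_{2,1}$. The delicate step is the left-hand square. Restricting $E_{m,2}$ to $K$ gives
$$
E_{m,2}(\imath_{1,m+1}(x_m))=(\imath_{1,m}(m\,x_m),0)=\imath^m_{1,2}(\imath_{1,m}(m\,x_m))\in\ker\pi^m_{2,1}=\imath^m_{1,2}[\F^{(m)}_z],
$$
the only subtlety being the spurious scalar $m$. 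To absorb it I would set $J=\imath^m_{1,2}\circ(m\,{\bf I}_{\F^{(m)}_z})\circ\imath_{m,m+1}^{-1}$; this is an isomorphic embedding with $J[Y]=\imath^m_{1,2}[\F^{(m)}_z]=\ker\pi^m_{2,1}$, so the bottom row is exact, and $J|_K=E_{m,2}|_K$ follows from the displayed identity together with $\imath_{m,m+1}\circ\imath_{1,m}=\imath_{1,m+1}$.

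With the hypotheses of Lemma~\ref{lem:KYZ} verified, the conclusion is immediate: $E_{m,2}$ is an embedding with complemented range and $F[m,2]/E_{m,2}[\F^{(m+1)}_z]\cong Y/K$. It then remains to identify $Y/K$: the isomorphism $\imath_{m,m+1}:\F^{(m)}_z\to Y$ carries $\imath_{1,m}[\F^{(1)}_z]$ onto $K$, whence $Y/K\cong \F^{(m)}_z/\imath_{1,m}[\F^{(1)}_z]\cong \F^{(m-1)}_z$, the last isomorphism being $\pi_{m,m-1}$ from the entwining sequence (\ref{eq:nn+kk}) with $n=1$, $k=m-1$. The only step that is not routine bookkeeping is the left-hand square above: the honest restriction of $E_{m,2}$ to $K$ differs from the obvious inclusion of $\F^{(m)}_z$ as $\ker\pi^m_{2,1}$ by the factor $m$, and this mismatch has to be built into the choice of $J$; the rest is just index tracking for the matrix of $E_{m,2}$ and for the maps $\imath$ and $\pi$.
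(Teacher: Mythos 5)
Your proof is correct and follows essentially the same route as the paper: both construct the commutative diagram with top row $0\to\F_z\to\F^{(m+1)}_z\to\F^{(m)}_z\to0$, bottom row the canonical sequence of $\mathscr T$ with inclusion scaled by $m$, and middle arrow $E_{m,2}$, then invoke Lemma~\ref{lem:KYZ}. Your write-up merely makes explicit what the paper leaves implicit (the identification of $K$ and $Y$ as concrete subspaces of $\F^{(m+1)}_z$ and the definition of $J$ absorbing the factor $m$), which is a harmless and arguably helpful clarification.
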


\begin{proof}
We consider $\F_z$ and $\F^{(m)}_z$ as subspaces of $\F^{(m+1)}_z$ and check that the following diagram is commutative
\begin{equation}\label{dia:Fm2}
\xymatrixcolsep{3pc}
\xymatrix{
0\ar[r] & \F_z\ar[r]^{\imath_{1,m+1}} \ar[d]^{\imath_{1,m}}& \F^{(m+1)}_z\ar[r] \ar[d]^{E_{m,2}} & \F^{(m)}_z \ar[r] & 0\\
0\ar[r] & \F^{(m)}_z\ar[r]^-{m\, \imath^m_{1,2}} & F[m,2]\ar[r]&\F^{(m)}_z\ar@{=}[u] \ar[r] & 0 }
\end{equation}
where we have identified $ \F^{(m)}_z$ with $F[m,1]$ in the obvious way.
Given $x=(x_m,x_{m-1},\dots,x_0)$ in $\F^{(m+1)}_z$ one has
\begin{equation}\label{eq:Em2}
E_{m,2}(x_m,x_{m-1},\dots,x_0)=
\left( \begin{matrix}
mx_m  &  (m-1)x_{m-1}  & \dots & 2x_2 & x_1\\
 x_{m-1}   &   x_{m-2}  & \dots & x_1  & x_0\\
 \end{matrix}
 \right)
\end{equation}
The left square is commutative since for $x\in \F_z$ the two possible compositions lead to
$$
\left( \begin{array}{cccc}
mx  &  0 & \dots & 0\\
0  &   0  & \dots & 0
 \end{array} \right)
$$
The right square is commutative as well: given $(x_m,x_{m-1},\dots,x_0)$ in $\F^{(m+1)}_z$ one has
$$
\pi^2_{2,1} E_{m,2}(x_m,x_{m-1},\dots,x_0) =
\pi^2_{2,1} \left( \begin{matrix}
mx_m  &  (m-1)x_{m-1}  & \dots & 2x_2 & x_1\\
 x_{m-1}   &   x_{m-2}  & \dots & x_1  & x_0\\
 \end{matrix}
 \right) = (x_{m-1},\dots,x_0).
$$
Applying the preceding lemma concludes the proof.
\end{proof}
The remark after the lemma shows where the copies of $\F^{(m+1)}_z$ and $\F^{(m-1)}_z$ are located in $F[m,2]$. The first one is given by the action of $E_{m,2}$, described by (\ref{eq:Em2}). The position of the complementary copy of $\F^{(m-1)}_z$ is defined by (\ref{eq:U=J-E}):  if $(y_{m-2},\dots, y_0)\in \F^{(m-1)}_z$ and $\tilde{y}=(y_{m-1},y_{m-2},\dots, y_0)$ is a ``lifting'' in   $\F^{(m)}_z$ we have
$$
U(\tilde{y}+\F_z,0)=m\imath^m_{1,2}\tilde{y}- E_{m,2}(\underbrace{y_{m-1},y_{m-2},\dots, y_0,0}_{\imath_{m,m+1}(\tilde{y})}) =
\left( \begin{matrix}
0 &  y_{m-2} & 2y_{m-3}  & \dots & (m-2)y_1 &  (m-1)y_0\\
-y_{m-2} & -y_{m-3}  & -y_{m-4}& \dots &    -y_0 & 0\\
 \end{matrix}
 \right)
$$
Reversing the parameters leads to similar conclusions:

\begin{proposition}
For each $m\geq 1$ and $z\in\mathbb U$ the operator $E_{2,m}:\F^{(m+1)}_z\To F[2,m]$ has complemented range and the quotient of $F[2,m]$ by  $E_{2,m}[\F^{(m+1)}_z]$ is isomorphic to $\F^{(m-1)}_z$.
\end{proposition}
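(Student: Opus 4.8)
The plan is to repeat the argument of the preceding proposition with the two parameters interchanged. By the change‑of‑variable considerations of Section~\ref{sec:chainrule} (the \emph{Moral} there), which show that a conformal substitution affects neither complementation nor the isomorphism type of quotients, we may assume $\U=\D$; then $\mathscr T=\F^{(2)}$, so that a typical element of $F[2,m]=\mathscr T^{(m)}_z$ is the value at $z$ of $\tau_{(m,0]}F$ for some $F=(f_1,f_0)$ with $f_0\in\F$ and $f_1-f_0'\in\F$. Recall also the explicit description of $E_{2,m}$ obtained above: its $i$-th row (for $m>i\geq 0$) is the pair $\big((i+1)x_{i+1},\,x_i\big)$, so that its $f_0$-column equals $(x_{m-1},\dots,x_0)$ and its $f_1$-column equals $(mx_m,(m-1)x_{m-1},\dots,x_1)$.

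First I would introduce the operator $Q\colon F[2,m]\To\F^{(m)}_z$ that extracts the $f_0$-column; in terms of representatives, $Q$ sends the class of $\tau_{(m,0]}F(z)$ to $\tau_{(m,0]}f_0(z)$, which lies in $\F^{(m)}_z$ because $f_0\in\F$. Clearly $Q$ is a bounded surjection (take $F=\tau_{(2,0]}g$), hence an open quotient map. The crucial point is to identify its kernel: the map that extracts the $f_1$-column restricts to a bounded bijection $\ker Q\To\F^{(m)}_z$, which is therefore an isomorphism by the open mapping theorem. The delicate direction here is surjectivity of this $f_1$-column map --- given a prescribed jet $(c_{m-1},\dots,c_0)\in\F^{(m)}_z$ one must produce $(f_1,f_0)\in\mathscr T$ with $f_0$ vanishing to order at least $m$ at $z$ and $\tau_if_1(z)=c_i$ for $0\leq i<m$ --- and it is achieved by correcting with functions of the form $\varphi^{m-1}k$ and $\varphi^{m}k$ with $k\in\F$ and $\varphi$ a conformal self‑map of $\D$ fixing $z$, using that $\F$ is stable under such multiplications and that $\tau_j(\varphi^{\ell}k)(z)=0$ for $j<\ell$. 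This identification is where essentially all of the work lies, and it is the step I expect to be the main obstacle.

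Granting this, the matrix formula yields $Q\circ E_{2,m}=\pi_{m+1,m}$, while $E_{2,m}(\imath_{1,m+1}v)$ has $f_1$-column $(mv,0,\dots,0)$ and vanishing $f_0$-column, so it lies in $\ker Q$ and corresponds, under the isomorphism $\ker Q\cong\F^{(m)}_z$ above, to $m\,\imath_{1,m}(v)$. Hence there is a commutative diagram with exact rows
$$
\xymatrix{
0\ar[r] & \F_z\ar[r]^{\imath_{1,m+1}} \ar[d] & \F^{(m+1)}_z\ar[r]^{\pi_{m+1,m}} \ar[d]^{E_{2,m}} & \F^{(m)}_z \ar@{=}[d] \ar[r] & 0\\
0\ar[r] & \ker Q\ar[r] & F[2,m]\ar[r]^{Q} & \F^{(m)}_z \ar[r] & 0 }
$$
whose upper row is the entwining sequence (\ref{eq:nn+kk}) with $n=1$, $k=m$, and whose left vertical, read through the isomorphism $\ker Q\cong\F^{(m)}_z$, is the embedding $m\,\imath_{1,m}$.

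Finally, Lemma~\ref{lem:KYZ} applies verbatim, with $Z=\F^{(m+1)}_z$ and $\PO=F[2,m]$: it gives simultaneously that $E_{2,m}$ is an isomorphic embedding with complemented range and that
$$
F[2,m]\big/E_{2,m}[\F^{(m+1)}_z]\;\cong\;\ker Q\big/\imath_{1,m}[\F_z]\;\cong\;\F^{(m)}_z\big/\imath_{1,m}[\F_z]\;\cong\;\F^{(m-1)}_z,
$$
the last isomorphism being the entwining quotient $\pi_{m,m-1}$. Everything beyond the identification of $\ker Q$ is routine bookkeeping with the matrix of $E_{2,m}$ together with a direct appeal to Lemma~\ref{lem:KYZ}, exactly as in the previous proposition.
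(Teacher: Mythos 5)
You have reproduced the paper's architecture exactly: the right-column map $Q$, the identification of $\ker Q$ with $\F^{(m)}_z$ via the left column, the commutative diagram against the entwining sequence $0\to\F_z\to\F^{(m+1)}_z\to\F^{(m)}_z\to0$, and the appeal to Lemma~\ref{lem:KYZ}. The matrix bookkeeping ($Q\circ E_{2,m}=\pi_{m+1,m}$, $E_{2,m}\imath_{1,m+1}v$ landing in $\ker Q$ as $m\,\imath_{1,m}v$) is correct. But the one step you leave out is the whole proof, and you have even flagged the wrong half of it as the delicate one. Surjectivity of the $f_1$-column map is trivial: given a jet $c\in\F^{(m)}_z$, take an extremal $f\in\F$ for $c$ and the pair $(f_1,f_0)=(f,0)$, which lies in $\F^{(2)}$ and has zero $f_0$-column; no correcting functions $\varphi^{m-1}k$, $\varphi^m k$ are needed. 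The genuinely nontrivial containment is the converse: if $(f_1,f_0)\in\F^{(2)}$ (so one only knows $f_0\in\F$ and $f_1-f_0'\in\F$, \emph{not} $f_1\in\F$) and $f_0$ vanishes to order at least $m$ at the base point, then the $m$-jet of $f_1$ at that point must be the $m$-jet of a \emph{single} function of $\F$, with norm control. Without this, the bottom row of your diagram is only exact by fiat (you took the literal kernel), but then Lemma~\ref{lem:KYZ} gives the quotient as $\ker Q$ modulo $E_{2,m}\imath_{1,m+1}[\F_z]$, and you cannot identify that with $\F^{(m-1)}_z$ --- nor even know that $E_{2,m}\imath_{1,m+1}$ is an embedding --- precisely because the identification $\ker Q\cong\F^{(m)}_z$ is missing.

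The paper closes this gap with a short but essential trick that your sketch of multiplicative corrections does not capture: working at the centre of the disc, write $f_0=\sum_{k\ge m}a_kz^k$ and $f_1=f_0'+g$ with $g\in\F$, and set $f=g+m f_0/z$. Here one uses the \emph{division} direction of acceptability (condition (c): $\varphi f\in\F$ iff $f\in\F$) to see that $f_0/z\in\F$ with controlled norm, and one checks that $mf_0/z$ and $f_0'$ have the same $m$-jet at $0$ (both equal to $(ma_m,0,\dots,0)$), so $f\in\F$ realizes the $f_1$-column. Any route through corrections of the form $\varphi^{m-1}k$ ultimately needs the same division step to produce $k$ with controlled $\F$-norm from $f_0$, and in that respect also note the slip that your $\varphi$ must vanish at $z$, not fix it, for $\tau_j(\varphi^\ell k)(z)=0$ to hold. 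So: the skeleton is the paper's, but the proposal is incomplete at exactly the point where the proof lives.
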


\begin{proof}
We write the proof when $\mathbb U=\D$ with base point at the centre of the disc. The general case follows suit. Let us check that $F[2,m]$ fits into a commutative diagram
\begin{equation}\label{dia:F2m}
\xymatrixcolsep{3pc}
\xymatrix{
0\ar[r] & \F_0\ar[r] \ar[d]& \F^{(m+1)}_0\ar[r] \ar[d]^{E_{2,m}} & \F^{(m)} _0\ar[r] & 0\\
0\ar[r] & \F^{(m)}_0\ar[r]^-{mJ} & \F[m,2]\ar[r]^Q &\F^{(m)}_0\ar@{=}[u] \ar[r] & 0 }
\end{equation}
with exact rows. Recall what we agreed on unlabelled arrows. The other operators are defined as follows:
$$
E_{2,m}(x_{m},\dots,x_0)=\left( \begin{matrix}
mx_m  &  x_{m-1} \\
 (m-1) x_{m-1}   &   x_{m-2} \\
 \dots &\dots  \\
 2x_2 & x_1\\
  x_1  & x_0\\
 \end{matrix}
 \right);\, J(y_{m-1},\dots,y_0)=\left( \begin{matrix}
y_{m-1}  &  0\\
 y_{m-2}   &   0 \\
 \dots &\dots  \\
 y_1 & 0\\
  y_0  & 0\\
 \end{matrix}
 \right);\, Q\left( \begin{matrix}
u_{m-1}  &  v_{m-1}\\
u_{m-2}   &   v_{m-2} \\
 \dots &\dots  \\
 u_1 & v_1\\
  u_0  & v_0\\
 \end{matrix}
 \right)= (v_{m-1},\dots,v_0)
$$
While it is clear that the diagram commutes (when one replaces each space by its containing $\Sigma^k$) the continuity of $J$ and $Q$ is not completely obvious.

But an analytic function $F:\mathbb D\To \Sigma^2$ belongs to $\F^{(2)}$ if and only if there are $f_0,f_1\in \F$ such that  $F=(f_0'+f_1, f_0)$ in which case $\|F\|_{\F^{(2)}}=\|f_0\|_{\F}+ \|f_1\|_{\F}$; take $n=2$ in Corollary~\ref{cor:and-also}.

This implies that if $f_0,f_1$ are in $\F$ and $f_0=\sum_{k\geq 0}a_k z^k$ and $ f_1=\sum_{k\geq 0}b_k z^k$ are their respective Taylor expansions, then
\begin{equation}\label{eq:M}
M=
\left( \begin{matrix}
ma_m+b_{m-1}  &  a_{m-1}\\
 (m-1)a_{m-1}+ b_{m-2}   &   a_{m-2}  \\
 \dots &\dots  \\
 2a_2+b_1 & a_1\\
  a_1+b_0  & a_0\\
 \end{matrix}
 \right)\text{ belongs to $F[2,m]$, with } \|M\|_{F[2,m]}\leq
\|f_0\|_{\F}+ \|f_1\|_{\F}.
\end{equation}
and that all points of $F[2,m]$ have that form. Hence $J$ is bounded (actually contractive) from $\F^{(m)}_0$ to $F[2,m]$: given $x\in \F^{(m)}_0$ take an extremal $f\in \F$ with $x=\tau_{(m,0]}f(0)$, set $F=(f',0)$ (that is, $f_0=0,f_1=f$) and evaluate $F$ at the origin. Since all elements of $F[2,m]$ can be written as in (\ref{eq:M}) we see that their right columns are in $\F^{(m)}_0$ and that $Q$ is onto, with $\|Q:F[2,m]\To\F^{(m)}_0\|\leq 1$.

Clearly $J$ is injective. It remains to check that $\ker Q$ agrees with the image of $J$. One containment is trivial since $QJ=0$. As for the other assume $M\in F[2,m]$ is such that $Q(M)=0$. If we write $M$ as in (\ref{eq:M}), with $f_0=\sum_{k\geq 0}a_k z^k$ and $ f_1=\sum_{k\geq 0}b_k z^k$ in $\F$ we have that $a_k=0$ for $0\le k <m$, so that
$$
M=
\left( \begin{matrix}
ma_m+b_{m-1}  &  0\\
 b_{m-2}   &   0 \\
 \dots &\dots  \\
 b_1 & 0\\
  b_0  & 0\\
 \end{matrix}
 \right)\qquad\text{with}\qquad  \begin{cases}
 f_0=a_mz^m+a_{m+1}z^{m+1}+\cdots\\
 f_1=b_0+b_1z+b_{2}z^{2}+\cdots
 \end{cases}
$$
Since $f_0(0)=0$ we have that $f(z)=f_1(z)+mf_0(z)/z$ defines a function in $\F$; letting $y=\tau_{(m,0]}f(0)$ it should be obvious that $y\in \F^{(m)}_0$ is such that $J(y)=M$.
The proof concludes using Diagram (\ref{dia:F2m}) and the preceding lemma.
\end{proof}

The just proved proposition describes, in particular, the sucessive Rochberg (derived) spaces of the ``analytic family'' of the Kalton-Peck spaces; see Section~\ref{sec:couples}. It turns out that the spaces $F[m,2]$ and $F[2,m]$ are isomorphic since they are isomorphic to $\F^{(m+1)}_z\oplus   \F^{(m-1)}_z$.

It is both tempting and hasty to
conjecture that $E_{m,n}$ is always an embedding with complemented range with $F[m,n]/E_{m,n}[\F^{(m+n-1)}]$ isomorphic to  $F[m-1,n-1]$ for $m,n\geq 2$. We do not even know whether $E_{3,3}$ is an embedding or if $F[3,3]$ has a subspace isomorphic to $\F^{(5)}_z$.

\section{The solution of some problems. Counter-examples}\label{sec:appl}
In this section we will solve some problems left unanswered in \cite{cck,ccfg,Coifman1982,rochpac}.

\subsection{A totally incomparable family with nonsingular derivation at any point}\label{incomparable}

Recall that two Banach spaces are said to be totally incomparable if they do not admit isomorphic infinite dimensional subspaces. Recall also that an operator between Banach spaces is said to be strictly singular if its restrictions to infinite dimensional subspaces are never an isomorphism.

The paper \cite{ccfg2} is devoted to different aspects of the stability of the differential process associated to an analytic family $(\mathscr C_z)$. One problem not considered, though implicit, there is whether the total incomparability of the spaces $\mathscr C_t$ in a neighborhood of $\theta$ forces the quotient map $\pi_{2,1}: \mathscr C_z^{(2)}\To \mathscr C_z$ to be singular.

The answer is negative. Indeed, if $m\ge 2$, the quotient map $F[m,2]\To \Fm_z$ in Diagram (\ref{eq:Em2}) is {\em never} strictly singular because the composition
$$
\xymatrixcolsep{3.5pc}
\xymatrix{
\F_z \ar[r] & \F^{(m+1)}_z \ar[r]^{E_{m,2}} & F[m,2]\ar[r] & \F^{(m)}_z
}
$$
agrees with the natural inclusion $\imath_{1,m}$. It therefore suffices to consider a couple $(X_0, X_1)$ of Banach spaces and some $m\geq 2$ for which the spaces $\mathscr C(X_0,X_1)^{(m)}_t$  are mutually totally incomparable for $0<t<1$. This is easily achieved for all $m\geq 2$ taking $X_0=\ell_\infty, X_1=\ell_1$ since in this case, the spaces $\mathscr C(X_0,X_1)^{(m)}_t$, begin ``iterated'' twisted sums of $\ell_p$ for $p=1/t$ are $\ell_p$-saturated, by a simple 3-space argument; cf.
\cite[Theorem~3.2.d]{castgonz}.



\subsection{Answer to a question of Rochberg}\label{answroch}

In the seminal paper \cite[p.~266, last paragraph of Section 6]{rochpac}, Rochberg observes that, when $\mathscr F$ is the Calder\'on space associated to a couple of Banach lattices with associated differential $\Omega$ then $\Omega^{1,k}(f)$ depends only on $f$ and $\Omega^{1,1}(f)$. He asked if the same is true for arbitrary families. The answer is strongly negative since one can build, for each $n\geq 2$, an admissible family such that $ \Omega^{1,k}_z=0$ for $1\leq k<n$ but $ \Omega^{1,k}_z$ is not trivial for $k\geq n$.

Let us proceed with the counter-example. Fix a function $\omega:\D\To \mathbb S$ that extends to an analytic function on a neighborhood of $\overline{\mathbb D}$ that we denote again $\omega$. We set $p(z)=1/\Re(\omega(z))$. Consider the function space $\mathscr Z[\omega]$
which consists of those continuous functions $F:\overline{\mathbb D}\To\ell_\infty$ which are analytic on $\mathbb D$ and such that
$ \|F\|=\sup_{|z|\leq 1}\|F(z)\|_{\ell_{p(z)}}<\infty$. One has:

\begin{lemma}$\;$
\begin{itemize}
\item[(a)] $\mathscr Z[\omega]$ is an admissible space.
\item[(b)] $\mathscr Z[\omega]_z=\ell_{p(z)}$ for every $z\in \mathbb D$.
\item[(c)] Given $|\zeta|<1$ and $f\geq 0 $ normalized in $ \ell_{p(\zeta)}$, the function $\overline{\mathbb D}\To\ell_\infty$ defined by $F(z)=f^{\omega(z)/\omega(\zeta)}$ is normalized in $\mathscr Z[\omega]$ and $F(\zeta)=f$.
\end{itemize}
\end{lemma}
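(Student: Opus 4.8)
The plan is to verify the three claims in sequence, noting that (a) follows from the structure of $\mathscr Z[\omega]$ as a weighted Calderón-type space, (b) and (c) are essentially a change-of-variables computation once one has the right extremal functions. I would handle them in the order (c), then (b), then (a), since the extremal functions built in (c) are exactly what is needed to compute the intermediate spaces in (b), and admissibility in (a) is the most delicate point.

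For part (c): fix $|\zeta|<1$ and $f\geq 0$ with $\|f\|_{\ell_{p(\zeta)}}=1$. Define $F(z)=f^{\omega(z)/\omega(\zeta)}$, interpreting the power coordinatewise via $f_j^{w}=\exp(w\log f_j)$ where we set $0^w=0$. Since $\omega$ extends analytically to a neighbourhood of $\overline{\mathbb D}$ and takes values in $\mathbb S$, the exponent $\omega(z)/\omega(\zeta)$ is a bounded analytic function on a neighbourhood of $\overline{\mathbb D}$; hence each coordinate $F_j$ is analytic on $\mathbb D$ and continuous on $\overline{\mathbb D}$, and $F:\overline{\mathbb D}\To\ell_\infty$ is continuous because $\|f\|_{\ell_\infty}\leq 1$ and $\Re(\omega(z)/\omega(\zeta))$ stays bounded. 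To estimate $\|F(z)\|_{\ell_{p(z)}}$ I would compute, writing $\omega(z)/\omega(\zeta)=a(z)+ib(z)$,
$$
\|F(z)\|_{\ell_{p(z)}}^{p(z)}=\sum_j |f_j|^{\,p(z)\Re(\omega(z)/\omega(\zeta))}=\sum_j |f_j|^{\,p(z)\,a(z)}.
$$
The key observation is that $p(z)\,\Re(\omega(z))=1$ by definition of $p$, so $p(z)\,a(z)=p(z)\Re(\omega(z))/\Re(\omega(\zeta))\cdot(\text{correction})$ — I need to be careful here: actually $\Re(\omega(z)/\omega(\zeta))\neq\Re(\omega(z))/\Re(\omega(\zeta))$ in general, so a direct cancellation is not available. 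The correct route is instead to bound $\|F(z)\|_{\ell_\infty}\leq 1$ (each $|f_j|\leq 1$ and the exponent has nonnegative real part since $\omega(z),\omega(\zeta)\in\mathbb S$ forces $\Re(\omega(z)/\omega(\zeta))>0$) together with a log-convexity/Hölder argument: the map $z\mapsto \|F(z)\|_{\ell_{p(z)}}$ is subharmonic-like, and since $F$ is normalized when the exponent has real part matching $\Re(\omega(\zeta))$, one gets $\|F\|\leq 1$ on all of $\overline{\mathbb D}$ by the generalized maximum principle for the function $z\mapsto\log\|F(z)\|_{\ell_{p(z)}}$. That $F(\zeta)=f^{\omega(\zeta)/\omega(\zeta)}=f^1=f$ is immediate.

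For part (b): the inclusion $\ell_{p(z)}\subseteq\mathscr Z[\omega]_z$ follows from (c) together with homogeneity — given arbitrary $f\in\ell_{p(z)}$, write $f=u|f|$ with $|u_j|\leq1$ and apply (c) to $|f|/\|f\|_{p(z)}$, then multiply by the constant $u\|f\|_{p(z)}$ — giving $\|f\|_{\mathscr Z[\omega]_z}\leq\|f\|_{p(z)}$. The reverse inequality is the evaluation bound: for any $G\in\mathscr Z[\omega]$ with $G(z)=f$, apply the maximum principle to the scalar analytic function $w\mapsto\langle G(w),\xi\rangle$ for $\xi$ in the unit ball of $\ell_{q(z)}$ — or more directly observe $\|G(z)\|_{\ell_{p(z)}}\leq\sup_{|w|\le1}\|G(w)\|_{\ell_{p(w)}}=\|G\|$ since $z\mapsto\|G(z)\|_{\ell_{p(z)}}$ attains its maximum on $\mathbb T$ by subharmonicity of $\log\|G(z)\|_{\ell_{p(z)}}$. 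Hence $\|f\|_{p(z)}\leq\|f\|_{\mathscr Z[\omega]_z}$, and $\mathscr Z[\omega]_z=\ell_{p(z)}$ isometrically.

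For part (a): the evaluation maps $\delta_z:\mathscr Z[\omega]\To\ell_\infty$ are bounded because $\|\delta_z\|\leq 1$ (as $\|G(z)\|_{\ell_\infty}\leq\|G(z)\|_{\ell_{p(z)}}\leq\|G\|$). Completeness follows from a standard normal-families argument: a Cauchy sequence in $\mathscr Z[\omega]$ converges uniformly on $\overline{\mathbb D}$ in $\ell_\infty$, the limit is analytic on $\mathbb D$ and continuous on $\overline{\mathbb D}$, and Fatou's lemma gives the norm bound. For condition (b) of Definition~\ref{def:ad} — invariance under multiplication by a conformal self-map $\varphi$ of $\mathbb D$ — one direction is clear since $|\varphi(z)|\leq1$ makes $\|\varphi G\|\leq\|G\|$; the converse, that $\varphi G\in\mathscr Z[\omega]$ implies $G\in\mathscr Z[\omega]$ with $\|G\|\leq\|\varphi G\|$, uses that $G=(\varphi G)/\varphi$ is still analytic (the zero of $\varphi$ is cancelled since $\varphi G$ vanishes there) and $1/\varphi$ has modulus $1$ on $\mathbb T$, so the boundary norms agree; the maximum principle for $\log\|G(z)\|_{\ell_{p(z)}}$ then transfers the sup from $\mathbb T$ to $\overline{\mathbb D}$.

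\textbf{Main obstacle.} The delicate point is establishing the generalized maximum principle $\sup_{|z|\le1}\|G(z)\|_{\ell_{p(z)}}=\sup_{z\in\mathbb T}\|G(z)\|_{\ell_{p(z)}}$ for $G\in\mathscr Z[\omega]$, i.e.\ that $z\mapsto\log\|G(z)\|_{\ell_{p(z)}}$ satisfies a sub–mean-value inequality despite the varying exponent $p(z)$. The standard trick is to write, for each finitely supported approximation and each unit vector $\xi\in\ell_{q(\zeta_0)}$ at a would-be interior maximum $\zeta_0$, a scalar analytic function dominating $\langle G(\cdot),\xi\rangle$ whose modulus on $\mathbb T$ is controlled by $\|G(z)\|_{\ell_{p(z)}}$; constructing this dominating function requires producing, for the dual exponent, an analytic family of unit vectors — which is exactly the content of (c) applied to the conjugate couple. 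So (c) is genuinely the engine of the whole lemma, and it should be proved first and then invoked for (a) and (b).
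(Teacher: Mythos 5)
There is a genuine gap, and it sits exactly where you flagged it: part (c), which you correctly identify as the engine of the whole lemma. You set up the computation $\|F(z)\|_{\ell_{p(z)}}^{p(z)}=\sum_j f_j^{\,p(z)\Re(\omega(z)/\omega(\zeta))}$, rightly observe that $\Re\bigl(\omega(z)/\omega(\zeta)\bigr)$ does not factor as $\Re\omega(z)/\Re\omega(\zeta)$, and then abandon the computation in favour of a ``generalized maximum principle'' for $z\mapsto\log\|F(z)\|_{\ell_{p(z)}}$. That substitute cannot do the job: a maximum principle only reduces the supremum over $\overline{\mathbb D}$ to the supremum over $\mathbb T$, whereas what normalization of $F$ requires is the pointwise bound $\|F(z)\|_{\ell_{p(z)}}\le 1$ \emph{at boundary points themselves}, and you never establish it (nothing in your argument controls the exponent $p(z)\Re(\omega(z)/\omega(\zeta))$ there). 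Your auxiliary claim that $\omega(z),\omega(\zeta)\in\mathbb S$ forces $\Re\bigl(\omega(z)/\omega(\zeta)\bigr)>0$ is also false: the strip constrains only real parts, and values with large imaginary parts of opposite signs give a quotient with negative real part. Since your proof of (b) and your duality route to the norm identity in (a) are both channelled through (c), the gap propagates to the entire lemma.

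By contrast, the paper simply finishes the computation you started: it proves (c) by the chain $\|f^{\omega(z)/\omega_0}\|_{\ell_{p(z)}}=\|f^{1/\omega_0}\|_{\ell_1}^{1/p(z)}=\|f^{p_0}\|_{\ell_1}^{1/p(z)}=1$ for every $z\in\overline{\mathbb D}$, so no maximum principle enters (c) at all; (b) is then immediate because the defining supremum already dominates the value at the interior point (your subharmonicity detour for the lower bound in (b) is unnecessary), and the maximum principle is invoked only in (a), to identify $\|\cdot\|_{\mathscr Z[\omega]}$ with the boundary supremum and hence get isometric invariance under multiplication by M\"obius maps. The cancellation you worried about is exactly what makes this chain work when $\omega(\zeta)$ is real, which is the only case the paper subsequently uses ($\zeta=0$, $\omega(0)=\tfrac12$). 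For non-real $\omega(\zeta)$ your concern is legitimate — the formula $f^{\omega(z)/\omega(\zeta)}$ need not be normalized — but the repair is not a maximum principle; it is to renormalize the exponent, e.g. take $F(z)=f^{(\omega(z)-i\Im\omega(\zeta))/\Re\omega(\zeta)}$, for which $|F_j(z)|=f_j^{\Re\omega(z)/\Re\omega(\zeta)}$, whence the same one-line computation gives $\|F(z)\|_{\ell_{p(z)}}=1$ for every $z$ and $F(\zeta)=f$. With that (or with the paper's direct computation in the real case) in hand, your outline of (b) and (a) can be salvaged, but as written the proposal does not prove the lemma.
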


\begin{proof}
(a) It is clear that for each $z\in\mathbb D$ the evaluation $\delta_z$ is bounded as a map $\mathscr Z[\omega] \To \ell_\infty$. Since conformal automorphisms of the open unit disc extend continuously to the boundary (they are M\"obius transformations) in order to stablish that $\mathscr Z[\omega]$ has the required invariance property, it suffices to check that for each $F\in \mathscr Z[\omega]$ one has
$$
\|F\|=\sup_{z\in\mathbb T}\|F(z)\|_{\ell_{p(z)}},
$$
which follows from the maximum principle. The space $\mathscr Z[\omega]$ is complete since a uniform limit of analytic functions is analytic. 
 Part (b) follows from the very definition of the norm of $\mathscr Z[\omega]$ and (c), which we prove next: Fix $\zeta\in \D$ and set $p_0=p(\zeta)$ and $\omega_0=\omega(\zeta)$. Pick then a nonnegative, normalized $f\in\ell_{p_0}$ and define $F: \overline{\mathbb D}\To\ell_\infty$ by
$$
F(z)=f^{\omega(z)/\omega_0}
$$
with the convention that each power of zero is again zero. It is clear that $F$ is continuous on the closed disc and analytic on the interior. We are thus done because $F\in \mathscr Z[\omega]$ since for every $z\in\overline{\mathbb D}$,
 $$
\big\|F(z)\big\|_{\ell_{p(z)}}=\big\| f^\frac{\omega(z)}{\omega_0} \big\|_{\ell_{p(z)}}
= \big\| f^\frac{\Re\omega(z)}{\omega_0} \big\|_{\ell_{p(z)}}
=\big\| f^{1/\omega_0} \big\|_{\ell_{1}}^{1/p(z)}
=\big\| f^{p_0} \big\|_{\ell_{1}}^{1/p(z)}= 1.\qedhere$$
 \end{proof}
The answer to Rochberg's question comes now. For each $z\in\D$, let $\Omega_z$ be the differential generated by $\mathscr Z[\omega]$ at $z$.

\begin{proposition}
If $\omega'$ has a zero of order $k\geq 1$ at $\zeta$, then
$\begin{cases}
    \Omega_\zeta^{n,m} = 0 & \text{for $n+m \leq k+1;$} \\
    \Omega_\zeta^{n,m} \nsim 0 & \text{for $n+m\geq k+2$.}
  \end{cases}
$
\end{proposition}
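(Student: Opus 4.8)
The plan is to push everything through the canonical extremals of the preceding Lemma. Write $\omega_0=\omega(\zeta)$ and $p_\zeta=1/\Re\omega_0$ (so $1<p_\zeta<\infty$), and expand $\omega(z)/\omega_0=1+\sum_{j\ge k+1}b_j(z-\zeta)^j$; the hypothesis that $\omega'$ has a zero of order $k$ at $\zeta$ says exactly that $b_{k+1}=\omega^{(k+1)}(\zeta)/\big(\omega_0\,(k+1)!\big)\neq 0$. For $x\in\ell_{p_\zeta}$ put $g_x(z)=x\,\big(|x|/\|x\|_{p_\zeta}\big)^{\omega(z)/\omega_0-1}$, the extremal of part~(c) of the Lemma: it is homogeneous in $x$, $g_x(\zeta)=x$, and $\|g_x\|_{\mathscr Z[\omega]}=\|x\|_{p_\zeta}$. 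Expanding the exponential coordinatewise gives $\tau_\ell g_x(\zeta)=x\,Q_\ell\big(\log(|x|/\|x\|_{p_\zeta})\big)$ for polynomials with $Q_0=1$, $Q_\ell\equiv 0$ for $1\le\ell\le k$, and $Q_{k+1}(t)=b_{k+1}t$; thus $\tau_{k+1}g_x(\zeta)=\Omega_0(x):=b_{k+1}\,x\log(|x|/\|x\|_{p_\zeta})$, a nonzero scalar multiple of the Kalton--Peck map of $\ell_{p_\zeta}$.

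\emph{The range $n+m\le k+1$.} Applying a maximum principle for $\|\cdot\|_{\ell_{p(z)}}$ on subdiscs of $\mathbb D$ (the argument used in part~(a) of the Lemma) to the function obtained by dividing a representative by a M\"obius factor vanishing at $\zeta$, one shows $\mathscr Z[\omega]^{(n)}_\zeta\subseteq\ell_{p_\zeta}^n$ with $\|x_i\|_{p_\zeta}\le C_{i,\zeta}\|x\|_{\mathscr Z[\omega]^{(n)}_\zeta}$ for each coordinate. Conversely, for $x=(x_{n-1},\dots,x_0)$ the function $f_x(z)=\sum_{i=0}^{n-1}(z-\zeta)^i g_{x_i}(z)$ lies in $\mathscr Z[\omega]$ (each summand is $\ell_{p(z)}$-bounded since $g_{x_i}(z)\in\ell_{p(z)}$), is homogeneous in $x$, has $\|f_x\|\le C_\zeta\sum_i\|x_i\|_{p_\zeta}$, and --- because $Q_r\equiv 0$ for $1\le r\le k$ --- satisfies $\tau_j f_x(\zeta)=x_j$ for $j<n$ and $\tau_j f_x(\zeta)=0$ for $n\le j\le k$. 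Hence when $n+m\le k+1$ the family $\{f_x\}$ is a legitimate choice of extremals, and $\Omega^{n,m}_\zeta(x)=\tau_{(n+m,n]}f_x(\zeta)=0$ for all $x$. The same computation identifies $\mathscr Z[\omega]^{(\ell)}_\zeta=\ell_{p_\zeta}^\ell$, with equivalent norms, for every $\ell\le k+1$; we use this below.

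\emph{The range $n+m\ge k+2$, boundary case.} Let $n,m\ge 1$ with $n+m=k+2$; then $n,m\le k+1$, so $\mathscr Z[\omega]^{(n)}_\zeta\cong\ell_{p_\zeta}^n$, $\mathscr Z[\omega]^{(m)}_\zeta\cong\ell_{p_\zeta}^m$ and $\mathscr Z[\omega]^{(k+1)}_\zeta=\ell_{p_\zeta}^{k+1}$. By Section~\ref{sec:entwining}, $\mathscr Z[\omega]^{(k+2)}_\zeta=\mathscr Z[\omega]^{(k+1)}_\zeta\oplus_{\Omega^{1,k+1}_\zeta}\mathscr Z[\omega]^{(1)}_\zeta$; since $\Omega^{1,k+1}_\zeta(x)=(\Omega_0(x),0,\dots,0)=\imath_{1,k+1}(\Omega_0(x))$, one reads off $\mathscr Z[\omega]^{(k+2)}_\zeta\cong Z_{p_\zeta}\oplus\ell_{p_\zeta}^k$, where $Z_{p_\zeta}=\ell_{p_\zeta}\oplus_{\Omega_0}\ell_{p_\zeta}$ is, up to a coordinate rescaling, the Kalton--Peck space \cite{kaltpeck}. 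If the sequence $0\to\mathscr Z[\omega]^{(m)}_\zeta\to\mathscr Z[\omega]^{(k+2)}_\zeta\to\mathscr Z[\omega]^{(n)}_\zeta\to 0$ generated by $\Omega^{n,m}_\zeta$ were trivial, we would get $\mathscr Z[\omega]^{(k+2)}_\zeta\cong\ell_{p_\zeta}^n\oplus\ell_{p_\zeta}^m\cong\ell_{p_\zeta}$, whence $Z_{p_\zeta}$ would be a complemented subspace of $\ell_{p_\zeta}$, hence $Z_{p_\zeta}\cong\ell_{p_\zeta}$ by Pe\l czy\'nski's theorem, contradicting \cite{kaltpeck}. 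So $\Omega^{n,m}_\zeta\nsim 0$ whenever $n+m=k+2$.

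\emph{Propagation, and the obstacle.} Diagram (\ref{poz}) displays the bottom row as the pushout of the middle row, and the second of the two $3\times3$ diagrams in Section~\ref{sec:corner} displays $B$ as a pullback inside $Z$; since pushouts and pullbacks of split sequences split, this gives: $\Omega^{a,b}_\zeta\sim 0$ forces $\Omega^{a,b'}_\zeta\sim 0$ for all $b'\le b$ and $\Omega^{a',b}_\zeta\sim 0$ for all $a'\le a$ --- that is, nontriviality of $\Omega^{\cdot,\cdot}_\zeta$ is monotone upward in each index. Given arbitrary $n,m\ge 1$ with $n+m\ge k+2$, pick $n'\le n$ and $m'\le m$ with $n'+m'=k+2$ (always possible); the boundary case yields $\Omega^{n',m'}_\zeta\nsim 0$, and monotonicity yields $\Omega^{n,m}_\zeta\nsim 0$. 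The one genuinely delicate point is the concrete identification $\mathscr Z[\omega]^{(n)}_\zeta=\ell_{p_\zeta}^n$ for $n\le k+1$ --- one must show that the Taylor coefficients at $\zeta$ of functions in $\mathscr Z[\omega]$ lie in $\ell_{p_\zeta}$ with norms controlled by the Rochberg norm, which is where the maximum principle for the variable-exponent norm and division by M\"obius factors enter; the remaining ingredients are the explicit expansion of $g_x$ and routine three-space diagram chasing.
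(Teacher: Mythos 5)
Your proof is correct in outline and shares its first half with the paper's: both rest on the computation that the canonical extremal $g_x$ has vanishing Taylor coefficients of orders $1,\dots,k$ at $\zeta$, while the coefficient of order $k+1$ is a nonzero multiple of the Kalton--Peck map. Where you genuinely diverge is the nontriviality range. The paper only establishes that $\Omega^{1,k+1}_\zeta(f)=(c\,f\log|f|,0,\dots,0)$ is nontrivial, by composing with the bounded projection onto the first coordinate and quoting the non-splitting of the Kalton--Peck sequence, and then disposes of the remaining pairs with the terse identity $\Omega_\zeta^{1,k+1}= \pi_{k+2,k+1}\Omega_\zeta^{k+2,k+2}\imath_{1,k+2}$. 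You instead identify $\mathscr Z[\omega]^{(k+2)}_\zeta\simeq Z_{p_\zeta}\oplus\ell_{p_\zeta}^{k}$, rule out splitting for \emph{every} boundary pair $n'+m'=k+2$ via Pe\l czy\'nski's theorem plus the fact that $Z_{p_\zeta}$ is not isomorphic to $\ell_{p_\zeta}$, and then propagate nontriviality upward in each index by pushout/pullback along diagram (\ref{poz}). Your version needs strictly stronger inputs ($Z_p\not\simeq\ell_p$ and Pe\l czy\'nski, versus mere non-splitting of the Kalton--Peck sequence), but it buys a cleaner and more complete coverage of the region $n+m\geq k+2$: interior pairs such as $(n,m)=(2,2)$ for $k=2$ are reached through the boundary pair and the two monotonicity moves, something the paper's single composition identity does not immediately deliver.

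The one step you should not leave as a sketch is the containment $\mathscr Z[\omega]^{(n)}_\zeta\subseteq\ell_{p_\zeta}^{n}$ with coordinatewise norm control. As written, your argument is circular there: you call $\{f_x\}$ ``a legitimate choice of extremals'', but legitimacy means $\|f_x\|\leq C\|x\|_{\mathscr Z[\omega]^{(n)}_\zeta}$, and your estimate for $\|f_x\|$ is in terms of $\sum_i\|x_i\|_{p_\zeta}$, which is exactly the containment being claimed. The M\"obius/maximum-principle route can indeed be carried out (subtract the canonical extremal of $f(\zeta)$ from a representative, divide by a conformal map vanishing at $\zeta$, and iterate, using $\tau_r g_x(\zeta)=0$ for $1\leq r\leq k$), but the cleanest fix is the inductive bootstrap the paper itself uses: for $n=1$ the selection $g_x$ is isometric, hence $\Omega^{1,1}_\zeta$ is bounded (indeed $0$) and the general identification of $\F^{(n+k)}_z$ with the twisted sum in Section~\ref{sec:entwining} gives $\mathscr Z[\omega]^{(2)}_\zeta=\ell_{p_\zeta}^{2}$ with equivalent norms; this legitimizes $f_x$ for arrays of length $2$, and one climbs step by step up to length $k+1$. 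With that adjustment, and with the reference for $Z_{p}\not\simeq\ell_{p}$ to \cite{kaltpeck} made explicit, your proof is complete.
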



\begin{proof}
The hypothesis means that $\omega'(\zeta)=\dots=\omega^{(k)}(\zeta)=0, \omega^{(k+1)}(\zeta)\neq 0$ and for $|z-\zeta|$ small enough we have
$$
\frac{\omega(z)}{\omega_0}=1+\sum_{n=k+1}^\infty a_n(z-\zeta)^n
$$
with $a_{k+1}\neq 0$. Set $a(z)=\omega(z)/\omega_0-1$, so that $a(z)$ has a zero of order $k+1$ at $z=\zeta$, with $a^{(k+1)}(\zeta)=\omega^{(k+1)}(\zeta)/\omega_0 =(k+1)!a_{k+1}$.
Take a positive, normalized $f\in \mathscr Z[\omega]_\zeta=\ell_{p_0}$ and let $F$ be the extremal provided above:
$$
F(z)=  f^{\omega(z)/\omega_0} =\exp\left( \frac{\omega(z) \log f}{\omega_0}	\right)= \exp\big( (1+a(z)) \log f	\big)= f\,\exp\big(a(z)\log f\big).
$$
Differentiating $F$ we obtain $F'(\zeta)=\cdots=F^{(k)}(\zeta)=0$ which immediately implies that
$\Omega_\zeta^{n,m}$ is bounded for $n+m\leq k+1$; which, after induction on $k$, gives
$$\mathscr Z[\omega]_\zeta^{(k+1)} =  \mathscr Z[\omega]_\zeta\oplus\cdots\oplus  \mathscr Z[\omega]_\zeta  =  \ell_{p_0}^{k+1}.$$
On the other hand,
$$
F^{(k+1)}(\zeta)= \frac{\omega^{(k+1)}(\zeta)}{\omega_0} \,f\, \log f
$$and thus
$$
\Omega_\zeta^{1,k+1}(f)=\left(c f\log|f|, 0, \dots, 0 \right),
$$
for some $c\neq 0$ and all normalized $f$. This map cannot be trivial since projection onto the first factor (which is bounded) yields the genuine (nontrivial) Kalton-Peck map; and therefore $\Omega_\zeta^{n,m}$ cannot be trivial when $n+m\geq k+2$ since
$\Omega_\zeta^{1,k+1}= \pi_{k+2,k+1}\Omega_\zeta^{k+2, k+2}\imath_{1,k+2}$.\end{proof}

The most obvious examples where the preceding Proposition applies are obtained taking $\omega(z)={1\over 2}+ rz^k$, with $0<r< {1\over 2} $ and $k\geq 2$.
In this case $\omega'(z)=krz^{k-1}$ has a zero of order $k-1$ at $0$ and thus
$\mathscr Z[\omega]_0^{(k)} \simeq (\ell_{1/2})^{k}= \ell_2$, while
$\mathscr Z[\omega]_0^{(k+1)}\simeq \mathscr Z_2 \oplus \ell_2^{k-1}\simeq \mathscr Z_2 \oplus \ell_2$ where
$\mathscr Z_2$ is the Kalton-Peck $Z_2$ space according to the notation in Section \ref{sec:corner}. The distribution of the spaces on $\mathbb T$ induced by the configuration $\omega$ consists of a ``periodic'' family of $\ell_{p(\theta)}$ spaces where $\theta\in [0,2\pi)$, and
$$
p(\theta)=\frac{1}{\Re({1\over 2}+re^{ik\theta})}=\frac{2}{1+2r\cos(k\theta)}.
$$
In \cite{cck}, it is shown that if the first differential $\Omega^{1,1}$ induced by an admissible space $\F$ is not trivial at $z$ then all $\Omega^{n,m}$ are nontrivial at $z$. Problem 6.1 in \cite{cck} asks whether the  reciprocal is true. The preceding example shows that the answer is negative. 

\subsection{A remark on ``reiteration'' for higher order differentials}
The spaces $\mathscr Z[\omega]$ are ``toy-examples'' of a more general construction by Coifman, Cwikel, Rochberg, Sagher and Weiss whose first order version is studied in \cite{ccfg}.
The key result we need is the basic reiteration for families of  \cite[Theorem 5.1]{Coifman1982}: 
{\em Let $\alpha: \T\To [0,1]$ be a measurable function such that both its infimum and supremum are attained. Let $(X_0, X_1)$ be an interpolation couple of Banach spaces.
Then $\mathcal X=\big\{(X_0, X_1)_{\alpha(\omega)} : {\omega \in \T}\big\} $ is an admissible interpolation family (in the sense of Section~\ref{sec:analytic-family}) and, if $\F=\F(\mathcal X)$ denotes the corresponding admissible space, then $\F_z= (X_0, X_1)_{\alpha(z)}$, with equality of norms, where $\tilde{\alpha}(z) = \int_{\partial \T} \alpha(\omega) dP_z(\omega)$ is the harmonic extension to $\mathbb D$ provided by the Poisson kernel $P_z$.}

The crucial fact inside the proof of this theorem is that if
 $\beta$ is the harmonic conjugate of $\tilde{\alpha}$ (with $\beta(0)=0$, say) and $\psi=\tilde{\alpha}+ i\,\beta$ then, given $z\in\D$ and $x\in\F_z=(X_0, X_1)_{\alpha(z)}$ one can obtain an extremal in $\F$ just taking an extremal $f$ for $x$ in $\mathscr C(X_0,X_1)$ and letting $f\circ\psi$.

It follows that if $\Omega_\theta$ denote the differentials associated to $\mathscr C(X_0,X_1)$ for $0<\theta<1$, then the differentials associated to $\F$ are given by $\Phi_z=\psi'(z)\Omega_{\psi(z)}= \psi'(z)\Omega_{\tilde{\alpha}(z)}$ at the first order level (this is \cite[Theorem 3.20]{ccfg}).

More generally, recall from Section~\ref{sec:chainrule} that for each $n$ there is an upper triangular matrix $\mathbf{FdB}[n,z,\psi]$ such that  $\tau_{(n,0]}(f\circ\psi)(z)= \mathbf{FdB}[n,z,\psi]  \tau_{(n,0]}f(\psi(z))$. It is clear that these matrices intertwine the successive differentials by the formul\ae
$$
\big(\Phi_z^{n,k} \big(\mathbf{FdB}[n,z,\psi]\,x\big), \mathbf{FdB}[n,z,\psi]\,x\big) = \mathbf{FdB}[n+k,z,\psi](\Omega_{\tilde{\alpha}(z)}^{n,k}(x), x\big),
$$
where $x\in \mathscr C(X_0, X_1)_{\alpha(z)}^{(n)}$, which matches with Diagram~\ref{dia:FuGz}. Note that $\Omega_{\tilde{\alpha}(z)}^{n,k}= \Omega_{\psi(z)}^{n,k}$.

\section{Appendix: A Fr\'echet algebra of analytic functions}

This Appendix contains the definition and basic properties of the algebra that supports the notion of an acceptable space. There are a number of reasons, most of them implicit in Section~\ref{sec:disc}, suggesting that one must start with an algebra of analytic functions on the disc which contains $\Aut(\mathbb D)$, the conformal automorphisms of the disc, and admits differentiation. The heuristic argumentation could be like this: Pick an admissible space $\F$. To generate $\F^{(2)}$ one would itch to set the space of functions $\{(f', f): f\in \F\}$; since $\F$ is admissible the product $\varphi f$ is in $\F$ for every $f \in \F$ and every conformal $\varphi$ as in Definition \ref{def:ad}. Now the point is that $((\varphi f)', \varphi f)$ does not behave as expected; and this is because $(\varphi f)' = \varphi' f + \varphi f'$. The term $\varphi f'$ is harmless since $\F$ is admissible, but
$\varphi' f$ is not, unless we somehow have a product $A\times \F \to \F$ by an algebra containing all derivatives of conformal maps.\medskip

In the search for $A$, observe that Banach algebras tend to not admit differentiation. So, instead of struggling to get an artificial one it is perhaps a better move to give up and look into the realm of Fr\'echet algebras, the natural habitat of derivatives. This is what we will do. A sequence of complex numbers $(c_n)$ is said to be rapidly decreasing if, for every positive real $\alpha$, one has $|c_n|=O(n^{-\alpha})$. Let us denote by $(s) $ the Fr\'echet space of rapidly decreasing sequences in its natural topology generated by the system of norms $|(c_n)|_\alpha = \sup_{n\geq 0}{|c_n| \: n^\alpha}$ for $0<\alpha<\infty.$  Note that $(s) $ contains every geometric progression $(a^n)_{n\geq 0}$ with $a\in\mathbb D$.\medskip

Let $A^\infty$ denote the linear space of all analytic functions $f:\mathbb D\To \mathbb C$
whose Taylor coefficients at the origin belong to $(s)$, with the obvious Fr\'echet topology. The following facts about $A^\infty$ are not hard to check:
\begin{itemize}
\item $A^\infty$ is a unital Fr\'echet algebra with the pointwise product (which does not correspond to the coordinatewise product of sequences, but to their convolution).
\item Ordinary differentiation is a continuous, linear endomorphism on $A^\infty$.
\item $\Aut(\mathbb D)\subset A^\infty$.
\end{itemize}

To prove the third point,
recall that all conformal automorphisms of the disc are M\"obius transformations and so they have the form
\begin{equation*}
\varphi(z)=\lambda\:\frac{z-a}{\overline{a}z-1}\quad\quad(|\lambda|=1, |a|<1).
\end{equation*}

Assuming $\lambda=1$ we have
$$
\varphi(z)=\frac{a-z}{1-\overline{a}z}=(a-z)\sum_{n\geq 0} \overline{a}^nz^n= a+
\sum_{n\geq 1} \left(a\overline{a}^n+\overline{a}^{n-1}\right)z^n \qquad\implies \qquad \varphi\in A^\infty.
$$

A minor drawback of the definition of $A^\infty$ is that everything seems to depend on the behaviour of the functions at the origin. We now characterize those functions which are in $A^\infty$ by  means of their boundary values. First of all, note that since the Taylor coefficients of any $f\in A^\infty$ are absolutely summable, $f$ extends continuously to the closed disc and in particular, it belongs to the disc algebra $A$ and even to the positive Wiener algebra $W^+$ (see definiton below). Let us denote this extension again by $f$. If $f$ is any function defined on the closed disc, then $f_{\mathbb T}$ denotes the ``boundary values'', that is, the periodic function defined by
$$
f_\T(t)=f(e^{it})\quad\quad(t\in\mathbb R)
$$
for real $t$. We denote by $Dg$ the ordinary derivative of $g:\mathbb R\longrightarrow\mathbb C$ with respect to the real variable $t$:
$$
Dg(t)=\lim_{h\to 0}\frac{g(t+h)-g(t)}{h}
$$
provided that limit exists. Given a continuous $2\pi$-periodic function $g:\mathbb R\to\mathbb C$, the $n$-th Fourier coefficient of $g$ is
$$
c_n=c_n(g)=\frac{1}{2\pi}\int_0^{2\pi}g(t)e^{-int}dt\quad\quad(n\in\mathbb Z).
$$
Note that if $g$ corresponds to the boundary values of some function of the disc algebra, then $c_n(g)=0$ for each $n< 0$. If, moreover, $f\in A$, then, by Cauchy formul\ae,
$$
f^{(n)}(0)= \frac{n!}{2\pi i}\oint_\mathbb T \frac{f(z)}{z^{n+1}}dz=
\frac{n!}{2\pi i}\int_0^{2\pi} \frac{f(e^{it})}{e^{i(n+1)t}}de^{it}=
\frac{n!}{2\pi}\int_0^{2\pi} \frac{f(e^{it})}{e^{int}}d t,
$$
so the the $n$-th Taylor coefficient of $f$ at the origin agrees with the
the $n$-th Fourier coefficient of ${f_\T}$.\medskip

Differentiability properties of periodic functions are related to the decay of their Fourier coefficients; indeed, a continuous $2\pi$-periodic function $g:\mathbb R\longrightarrow\mathbb C$ is smooth (that is, it has derivatives of all orders) if and only if
the (bilateral) sequence of Fourier coefficients of $g$ belongs to $(s)$; see, for instance, \cite[Lemma~3]{khavin}. All this shows:

\begin{lemma}
An analytic function $f:\D\to\mathbb C$ belongs to $A^\infty$ if and only if it has a continuous extension to the boundary which is smooth on $\T$.\hfill$\square$
\end{lemma}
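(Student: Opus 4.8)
The plan is to deduce the lemma from the two facts assembled in the paragraphs just above its statement. The first is that a function $f$ analytic on $\D$ with a continuous extension to $\overline{\D}$ belongs to the disc algebra $A$, and for such an $f$ the $n$-th Taylor coefficient $a_n=f^{(n)}(0)/n!$ at the origin equals the $n$-th Fourier coefficient $c_n(f_\T)$ of the boundary function for $n\geq 0$, while $c_n(f_\T)=0$ for $n<0$; this was read off from the Cauchy integral formula. The second is the classical criterion recalled just above, that a continuous $2\pi$-periodic function is smooth exactly when its two-sided sequence of Fourier coefficients is rapidly decreasing, i.e.\ belongs to $(s)$. Granting these, the proof is only a matching of one-sided and two-sided sequences.

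First I would treat the forward implication. Let $f\in A^\infty$, so the Taylor coefficients $(a_n)_{n\geq 0}$ lie in $(s)$ and are in particular absolutely summable; hence, as already noted in the text, $f$ extends continuously to $\overline{\D}$, so $f\in A$. Then the two-sided Fourier sequence of $f_\T$ is precisely the sequence $(a_n)_{n\geq 0}$ extended by zeros to the negative indices, which again lies in $(s)$, so $f_\T$ is smooth on $\T$.

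Conversely, suppose $f$ is analytic on $\D$ and extends to a continuous function on $\overline{\D}$ whose boundary function $f_\T$ is smooth on $\T$. Since $f\in A$, we have $c_n(f_\T)=a_n$ for $n\geq 0$ and $c_n(f_\T)=0$ for $n<0$, with $(a_n)$ the Taylor coefficients of $f$ at the origin; smoothness of $f_\T$ puts the whole two-sided sequence $(c_n(f_\T))_{n\in\mathbb Z}$ in $(s)$, and restricting to $n\geq 0$ shows $(a_n)_{n\geq 0}\in(s)$, that is, $f\in A^\infty$.

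I do not anticipate a real obstacle; the statement is in essence a dictionary between the coefficient definition of $A^\infty$ and a description via boundary regularity, and all the analytic ingredients have already been recorded. The only point worth a sentence is the elementary remark that a one-sided sequence belongs to $(s)$ if and only if its zero-extension to $\mathbb Z$ does, which is immediate from the definition of the seminorms $|(c_n)|_\alpha=\sup_{n\geq 0}|c_n|\,n^\alpha$.
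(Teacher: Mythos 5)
Your argument is correct and is precisely the one the paper intends: the lemma is stated with an empty proof ("All this shows:") because it is exactly the combination of the two facts you invoke — for a disc-algebra function the Taylor coefficients at the origin coincide with the Fourier coefficients of the boundary values (with vanishing negative coefficients), and a continuous $2\pi$-periodic function is smooth iff its bilateral Fourier sequence lies in $(s)$ — together with the trivial zero-extension remark. No difference in approach and no gap.
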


\begin{corollary}
If $\psi\in\Aut(\D)$, then $\psi^*$ is a (continuous) automorphism of $A^\infty$.
\end{corollary}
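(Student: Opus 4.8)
The plan is to read $\psi^*$ as the composition (pull-back) operator $\psi^*(f)=f\circ\psi$ and to verify, in turn, that it is well defined on $A^\infty$, that it is a unital algebra homomorphism, that it is bijective, and that it is bicontinuous; the last point is where the only genuine work lies.

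The key structural remark is that every $\psi\in\Aut(\D)$ is a M\"obius transformation $\psi(z)=\lambda(z-a)/(\bar a z-1)$ with $|a|<1$, hence holomorphic on the disc of radius $1/|a|>1$, i.e.\ on a neighbourhood of $\overline{\D}$, and it maps $\D$ onto $\D$ and $\T$ onto $\T$. Thus $\psi$ restricts to a real-analytic diffeomorphism of $\T$ onto itself, and there is a smooth lift $\sigma:\mathbb R\to\mathbb R$ with $\psi(e^{it})=e^{i\sigma(t)}$ and $\sigma(t+2\pi)=\sigma(t)+2\pi$. Now if $f\in A^\infty$, then by the Lemma $f$ has a continuous extension to $\overline{\D}$ whose restriction $f_\T$ to $\T$ is smooth; the function $f\circ\psi$ is holomorphic on $\D$, continuous on $\overline{\D}$, and $(f\circ\psi)_\T=f_\T\circ\sigma$ is a composition of smooth $2\pi$-periodic data, hence smooth on $\T$. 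By the Lemma again, $f\circ\psi\in A^\infty$. Linearity, multiplicativity and $\psi^*(1)=1$ are immediate from $(fg)\circ\psi=(f\circ\psi)(g\circ\psi)$; and since $\psi^{-1}\in\Aut(\D)$, the operator $(\psi^{-1})^*$ is a two-sided inverse of $\psi^*$, so $\psi^*$ is an algebra isomorphism.

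For continuity I would invoke the closed graph theorem, valid for linear maps between Fr\'echet spaces. Suppose $f_k\to f$ and $\psi^*f_k\to g$ in $A^\infty$. Convergence in $A^\infty$ forces coefficientwise convergence of the Taylor expansions, hence uniform convergence on compact subsets of $\D$; therefore $f_k\circ\psi\to f\circ\psi$ uniformly on compacta of $\D$ (as $\psi$ carries compacta to compacta), while also $\psi^*f_k=f_k\circ\psi\to g$ uniformly on compacta. Uniqueness of limits gives $g=f\circ\psi=\psi^*f$, so the graph of $\psi^*$ is closed and $\psi^*$ is continuous; applying the same argument to $\psi^{-1}$ shows $(\psi^*)^{-1}=(\psi^{-1})^*$ is continuous as well. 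Alternatively, the identification of Taylor with Fourier coefficients realizes $A^\infty$ topologically as the closed subalgebra of $C^\infty(\T)$ consisting of functions with vanishing negative Fourier coefficients, and one may appeal to the standard fact that precomposition with a fixed smooth diffeomorphism of the circle is continuous on $C^\infty(\T)$, the required seminorm estimates coming from the chain rule and Fa\`a di Bruno's formula.

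The main obstacle — really the only non-formal point — is the boundary analysis: one must make sure that a conformal automorphism of the disc induces an honest smooth self-diffeomorphism of $\T$, so that composition preserves $C^\infty(\T)$, and that the topology of $A^\infty$ is correctly matched with smoothness on $\T$; both facts are supplied by the preceding Lemma together with the holomorphicity of $\psi$ across $\T$. Everything else is routine bookkeeping with the algebra structure and one application of the closed graph theorem.
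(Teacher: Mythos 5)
Your proof is correct and follows essentially the same route as the paper's: you use the preceding Lemma to reduce membership in $A^\infty$ to smoothness of boundary values, observe that $\psi$ restricts to a smooth diffeomorphism of $\T$, obtain the inverse as $(\psi^{-1})^*$, and get continuity from the closed graph theorem. The only difference is that you spell out the closed-graph and algebraic details that the paper leaves implicit, which is fine.
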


\begin{proof}
Here, $\psi^*(a)=a\circ\psi$. It suffices to prove that $\psi^*$ is correctly defined (that is, it maps $A^\infty$ to itself) since the closed graph theorem implies continuity and the inverse is given by $(\psi^{-1})^*$. But the restriction of $\psi$ is a smooth diffeomorphism of $\mathbb T$ and so the boundary values of $\psi^*(a)$ are smooth if and only if so are those of $a$.
\end{proof}

We now graft our algebra $A^\infty$ into an arbitrary domain $\mathbb U$, conformally equivalent to the disc. Suppose $\psi:\mathbb U\longrightarrow\mathbb C$ is a conformal equivalence. We define
$$\psi^*[A^\infty]=
\{g:\mathbb U\to \mathbb C \text{ such that } g=f\circ\psi \text{ for some } f\in A^\infty\},
$$
with the obvious (Fr\'echet) topology. One has.

\begin{lemma}
$\psi^*[A^\infty]$ is independent of $\psi$.
\end{lemma}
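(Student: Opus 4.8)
The plan is to reduce everything to the preceding Corollary, which says that $\phi^*$ is a (continuous) automorphism of $A^\infty$ for every $\phi\in\Aut(\D)$. Let $\psi_0,\psi_1:\U\To\D$ be two conformal equivalences. The composition $\phi=\psi_1\circ\psi_0^{-1}$ is a conformal map of $\D$ onto $\D$, hence $\phi\in\Aut(\D)$, and $\psi_1=\phi\circ\psi_0$. This is the only fact about $\U$ that is used: any two uniformizations differ by a disc automorphism.

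Next I would establish the inclusion $\psi_1^*[A^\infty]\subseteq\psi_0^*[A^\infty]$ by a one-line computation. If $g\in\psi_1^*[A^\infty]$, write $g=f\circ\psi_1$ with $f\in A^\infty$. Then
$$
g=f\circ\psi_1=f\circ\phi\circ\psi_0=(\phi^*f)\circ\psi_0,
$$
and since $\phi^*f\in A^\infty$ by the Corollary, we get $g\in\psi_0^*[A^\infty]$. Interchanging the roles of $\psi_0$ and $\psi_1$ (now using $\phi^{-1}=\psi_0\circ\psi_1^{-1}\in\Aut(\D)$) gives the reverse inclusion, so $\psi_0^*[A^\infty]=\psi_1^*[A^\infty]$ as sets of functions on $\U$.

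Finally I would check that the Fr\'echet topology is also independent of the choice of $\psi$. By construction the topology on $\psi^*[A^\infty]$ is the one making $f\mapsto f\circ\psi$ a homeomorphism from $A^\infty$; so it suffices to note that the transition map, which on the level of $A^\infty$ is precisely $\phi^*$, is a topological isomorphism of $A^\infty$ onto itself — again exactly the content of the Corollary (its inverse being $(\phi^{-1})^*$). There is essentially no obstacle here; the only thing to be careful about is the direction of composition (that $\psi_1=\phi\circ\psi_0$ with $\phi$ a \emph{disc} automorphism, not a $\U$-automorphism), after which the argument is immediate.
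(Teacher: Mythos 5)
Your proof is correct and is essentially the paper's own argument: both reduce to the observation that two uniformizations $\psi_0,\psi_1:\U\to\D$ differ by a disc automorphism $\phi=\psi_1\circ\psi_0^{-1}$ and then invoke the preceding corollary that $\phi^*$ is a (topological) automorphism of $A^\infty$. The only difference is that you spell out the inclusions and the topology check, which the paper leaves implicit.
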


\begin{proof}
Suppose $\psi_i: \mathbb U\longrightarrow\mathbb C$ are  conformal equivalences for $i=1,2$. Then $\psi=\psi_2\circ\psi_1^{-1}$ is an automorphism of the disc and so $\psi^*$ is an automorphism of $A^\infty$. It is unnecessary to continue.
\end{proof}

From now on we write $A^\infty_\U$ instead of $\psi^*[A^\infty]$. Of course $A^\infty_\D$ is just $A^\infty$.\medskip

The positive Wiener algebra $W^+$ is the algebra of all analytic functions on the disc whose Taylor coefficients at the origin are absolutely summable. It is clear that each function in $W^+$ has a continuous extension to $\overline{\D}$ and, in particular, it is bounded on $\D$. Given $f\in W^+$ we put $\|f\|_{W^+}=\sum_{n\geq 0}|c_n|$, where $f(z)=\sum_{n\geq 0} c_nz^n$ for $z\in \D$. As before, if $\psi:\mathbb U\longrightarrow\mathbb D$ is a conformal map,  we define
$$\psi^*[W^+]=
\{g:\mathbb U\to \mathbb C \text{ such that } g=f\circ\psi \text{ for some } f\in W^+\}
$$
and we transfer the norm of $W^+$ to $\psi^*[W^+]$ by stipulating that $\|g\|_{\psi^*[W^+]}= \|f\|_{W^+}$ provided $g=f\circ\psi$.

Note that $g:\U\To\mathbb C$ belongs to $\psi^*[W^+]$ if and only if there is $(c_n)_{n\geq 0}$ in $\ell_1$ such that $g(u)=\sum_{n\geq 0}c_n\psi(u)^n$ for all $u\in \U$ in which case $\|g\|_{\psi^*[W^+]}=\|(c_n)\|_{\ell_1}=\sum_n|c_n|$. One has:

\begin{lemma}\label{lem:AW}
$\psi^*[W^+]$ contains $A^\infty_\U$, and the inclusion is continuous.
\end{lemma}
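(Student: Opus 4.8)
The plan is to reduce everything to the disc. Both $A^\infty_\U$ and $\psi^*[W^+]$ are obtained by transporting $A^\infty$ and $W^+$ from $\D$ along one and the same conformal chart $\psi:\U\to\D$, and by the very way their (Fr\'echet, resp.\ normed) topologies are defined, the substitution $h\mapsto h\circ\psi$ is simultaneously a topological isomorphism of $A^\infty$ onto $A^\infty_\U$ and of $W^+$ onto $\psi^*[W^+]$. So I would first observe that it suffices to prove the statement for $\U=\D$ --- namely that $A^\infty\subseteq W^+$ with continuous inclusion --- and then pull the conclusion back through $\psi$.

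On the disc the argument is just the elementary fact that a rapidly decreasing sequence is absolutely summable. Given $f\in A^\infty$ with Taylor expansion $f(z)=\sum_{n\ge 0}c_nz^n$, so that $(c_n)\in(s)$, I would split off the constant term and use $|c_n|\le |(c_n)|_2\,n^{-2}$ for $n\ge 1$ to get
$$
\|f\|_{W^+}=\sum_{n\ge 0}|c_n|\ \le\ |(c_n)|_0+|(c_n)|_2\sum_{n\ge 1}\frac1{n^2}\ =\ |(c_n)|_0+\frac{\pi^2}{6}\,|(c_n)|_2,
$$
where $|(c_n)|_0=\sup_n|c_n|$ and $|(c_n)|_2=\sup_n|c_n|\,n^2$ are among the seminorms defining the topology of $A^\infty$. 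This shows at once that $f\in W^+$ and that $\|\cdot\|_{W^+}$ is dominated on $A^\infty$ by a continuous seminorm, which is precisely continuity of the inclusion $A^\infty\hookrightarrow W^+$. (Alternatively one could run the same one-line estimate on the Fourier side, using that a smooth periodic function has absolutely summable Fourier coefficients, but the Taylor-coefficient version is cleaner and quotes the description of $A^\infty$ directly.)

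Transporting back: for $g\in A^\infty_\U$ write $g=f\circ\psi$ with $f\in A^\infty$; then $f\in W^+$, hence $g\in\psi^*[W^+]$, and since $\|g\|_{\psi^*[W^+]}=\|f\|_{W^+}$ while the topology of $A^\infty_\U$ is the one carried over from $A^\infty$, the estimate above gives $\|g\|_{\psi^*[W^+]}\le |(c_n)|_0+\tfrac{\pi^2}{6}|(c_n)|_2$ with the right-hand side a continuous seminorm of $g$ in $A^\infty_\U$. I do not anticipate any genuine obstacle: the analytic content is trivial. The only thing to be careful about is the bookkeeping --- making sure that $A^\infty_\U$ and $\psi^*[W^+]$ are read from the same conformal chart, which is legitimate because (by the lemma establishing that $A^\infty_\U$ is independent of $\psi$) the space $A^\infty_\U$ does not depend on that choice anyway --- so that the reduction to the disc goes through verbatim.
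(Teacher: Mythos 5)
Your proof is correct and follows essentially the same route as the paper: reduce to the disc via the identification $A^\infty_\U=\psi^*[A^\infty]$ and then use the elementary fact that a rapidly decreasing sequence of Taylor coefficients is absolutely summable, with the $\ell_1$ norm dominated by the $(s)$-seminorms. Your version is in fact slightly more careful than the paper's one-line estimate, since you split off the constant term explicitly rather than bounding everything by $|(c_n)|_2$ alone.
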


\begin{proof}
Since $A^\infty_\U= \psi^*[A^\infty]$ it suffices to check that $W^+$ contains $A^\infty$ and the inclusion is continuous.
Which is obvious: every rapidly decreasing sequence $(c_n)_{n\geq 1}$ is absolutely summable, with $\|(c_n)\|_{\ell_1}\leq \frac{\pi}{6}|(c_n)|_2$.
\end{proof}

In spite of our good intentions, and rather unexpectedly, the grafted algebras $\AU$ are not closed under differentiation, even for very natural choices of $\U$. To convince the skeptical reader let us work out the following example: the function $\varphi(z)=(e^z-1)/(e^z+1)$ maps conformally the (horizontal) strip $\U=\{z: \Im(z)\in(-\frac{\pi}{2}, \frac{\pi}{2}) \}$ onto $\D$. Obviously $\varphi\in A^\infty_\U$. But if we write $w=e^z$, then
$$
\varphi'(z)=\frac{2w}{(w+1)^2}
$$
and we see that $\varphi'(z)$ has poles at $z=\pm  \frac{\pi}{2}i$. In particular
$\varphi'$ is unbounded on $\mathbb U$, and therefore it cannot be in $\AU$ which contains bounded functions only. In the end, this is one of the reasons why the generation of Rochberg families in general domains as in Section \ref{sec:general}
requires to move back and forth from $\mathbb U$ to $\D$ which, in turn, requires the Chain and Leibniz's rule.





\begin{thebibliography}{99}

\bibitem{aig} M. Aigner, \emph{A course in enumeration}, Graduate Texts in Mathematics, Springer 2007.


\bibitem{BL} J.~Bergh, J.~L\"ofstr\"om,
\emph{Interpolation spaces. An introduction}, Springer-Verlag, 1976.




\bibitem{cabecastdu} F. Cabello S\'{a}nchez, J.M.F. Castillo,
\emph{Duality and twisted sums of Banach spaces,}
J. Funct. Anal. 175 (2000) 1--16.

\bibitem{hmbst} F. Cabello S\'{a}nchez, J.M.F. Castillo,
\emph{Homological methods in Banach space theory,}
Cambrigde Studies in Advanced Mathematics, Cambridge University Press, scheduled 2021.

\bibitem{cck} F. Cabello S\'anchez, J.M.F. Castillo, N.J. Kalton, \emph{Complex interpolation and twisted twisted Hilbert spaces},
Pacific J. Math. 276 (2015) 287--307.

\bibitem{ccs} M.J. Carro, J. Cerd\`a, F. Soria, \emph{Commutators and interpolation methods}, Ark. Mat. 33 (1995) 199--216.


\bibitem{ccfg} J.M.F. Castillo, W.H.G. Corr\^ea, V. Ferenczi, M. Gonz\'alez, \emph{On the stability of the differential process generated by complex interpolation}, J. Math. Inst. Jussieu (in press). https://doi.org/10.1017/S1474748020000080. arXiv:1712.09647v2.

\bibitem{ccfg2} J.M.F. Castillo, W.H.G. Corr\^ea, V. Ferenczi, M. Gonz\'alez, \emph{Differential processes generated by two interpolators}, RACSAM 114, 183 (2020). https://doi.org/10.1007/s13398-020-00920-5

\bibitem{cfg} J.M.F. Castillo, V. Ferenczi, M. Gonz\'alez, \emph{Singular exact sequences generated by complex interpolation},  Trans. Amer. Math. Soc. 369 (2017) 4671--4708.

\bibitem{castgonz}  J.M.F.~Castillo, M.~Gonz\'{a}lez,
\emph{Three-space problems in Banach space theory},
Lecture Notes in Math. 1667,  Springer-Verlag, 1997.




\bibitem{castmoredu} J.M.F. Castillo, Y. Moreno, \emph{Twisted dualities in Banach space theory},  in ``Banach spaces and their applications in Analysis", Walter de Gruyter Proceedings in Maths. 59-76 (2007).



\bibitem{Coifman1982} R.R. Coifman, M. Cwikel, R. Rochberg, Y. Sagher, G. Weiss, \emph{A theory of complex interpolation for families of Banach spaces}, Adv. in Math. 43 (1982) 203-229.

\bibitem{Correa2018} W.H.G. Corr\^ea, \emph{Type, cotype and twisted sums induced by complex interpolation},
J. Funct. Anal. 274 (2018) 797--825.


\bibitem{tesis} W.H.G. Corr\^ea, \emph{Results on twisted sums
of Banach and operator spaces}, Thesis, University of Sao Paulo (2018).

\bibitem{Cw1978}
M. Cwikel, \emph{Complex interpolation, a discrete definition and reiteration}, Indiana Univ. Math. J. 27
(1978) 1005--1009.

\bibitem{Cw2014}
M. Cwikel, \emph{Lecture Notes on Duality and Interpolation Spaces}, (2014), arXiv:0803.3558v2, 36 pp.


\bibitem{Dai2007} S. Dai, Y. Pan, \emph{Note on {S}chwarz-{P}ick {E}stimates for {B}ounded and {P}ositive {R}eal {P}art {A}nalytic {F}unctions}, Proc. Amer. Math. Soc. 136 (2008) 635-640.
\bibitem{duren} P. L. Duren, \emph{Theory of $H^p$-spaces}, Academic Press 1970.

\bibitem{elp} P. Enflo, J. Lindenstrauss, G. Pisier, \emph{On the three space problem}, Math. Scand. 36 (1975) 199--210.


\bibitem{hiltstam}
P.J. Hilton, U. Stammbach,
\emph{A course in homological algebra}, GTM 4, Springer, 1971.

\bibitem{faa}
W.P. Johnson, \emph{The Curious History of
Fa\`a di Bruno's Formula}, Amer. Math. Monthly 109 (2002), 217--234.

\bibitem{kaltconv} N.J. Kalton, \emph{Convexity, type and the three-space problem,}
Studia Math. 69 (1981), 247-287.

\bibitem{kaltmem} N.J. Kalton, \emph{Nonlinear commutators in interpolation theory},
Memoirs of the A.M.S. 385, 1988.

\bibitem{kaltdiff} N.J. Kalton, \emph{Differentials of complex interpolation processes for K\"othe function spaces,}
Trans. Amer. Math. Soc. 333 (1992) 479--529.

\bibitem{kalt-mon}
N.J. Kalton, S. Montgomery-Smith, \emph{Interpolation of Banach spaces,}  Chapter 36 in
Handbook of the Geometry of Banach spaces, W.B. Johnson and J. Lindenstrauss eds. pp. 1131--1175.

\bibitem{kaltpeck}
N.J. Kalton, N.T. Peck, \emph{Twisted sums of sequence spaces and the three space problem,}
Trans. Amer. Math. Soc. 255 (1979) 1--30.

\bibitem{khavin} V.P. Khavin, \emph{Methods and Structure of Commutative Harmonic Analysis}, pp. 1--111. In:  Commutative harmonic analysis I. General survey. Classical aspects. Edited by V. P. Khavin, N. K.  and R. V.  Gamkrelidze, Encyclop\ae dia of Mathematical
Sciences, Vol. 15. Springer-Verlag, Berlin, 1991.

\bibitem{petunin}
S.G. Kre\u\i n, Ju.I. Petunin, E.M. Semenov, \emph{Interpolation of Linear Operators}, Transl. Math. Monogr. 54,
Amer. Math. Soc. (1982).





\bibitem{pisier}
G. Pisier, \emph{Comments on the paper ``Twisted Hilbert spaces and unconditional structure''}, in Nigel J. Kalton Selecta. Vol. 2, F. Gesztesy,
G. Godefroy, L. Grafakos, I. Verbitsky (eds.), 424--427, Birkh\"auser, 2016.

\bibitem{rochtrans} R. Rochberg, \emph{Function theoretic results for complex
interpolation families of banach spaces}, Trans. Amer. Math. Soc 284 (1984)  745-758.

\bibitem{rochpac} R. Rochberg, \emph{Higher order estimates in complex interpolation
theory}, Pacific J. Math. 174 (1996) 247--267.


\bibitem{ShapiroShields} J.H. Shapiro, A.L. Shields,
\emph{Unusual Topological Properties of the Nevanlinna Class}, Amer. J. Math. 97 (1975)
915-936.
\bibitem{Stafney01} J. D. Stafney, \emph{The Spectrum of an Operator on an Interpolation Space}, Trans. Amer. Math. Soc. 144 (1969)
333-349.




\end{thebibliography}
\end{document}